\crefname{hypothesis}{Hypothesis}{Hypotheses}
\title{Black-box Optimization Algorithms for Regularized Least-Squares Problems\thanks{Submitted to the editors DATE.}}
\author{Yanjun Liu\thanks{Mathematical Sciences Institute, Building 145, Science Road, The Australian National University, Canberra ACT 2601, Australia. Current address: Department of Operations Research and Financial Engineering, Princeton University, Princeton NJ 08544, USA (\email{yanjun.liu@princeton.edu}).}
\and Kevin H.~Lam\thanks{College of Engineering, Computing and Cybernetics, The Australian National University, 108 North Rd, Acton ACT 2601, Australia (\email{Kevin.Lam@anu.edu.au}).}
\and Lindon Roberts\thanks{School of Mathematics and Statistics, Carslaw Building, University of Sydney, Camperdown NSW 2006, Australia
  (\email{lindon.roberts@sydney.edu.au}). This author was supported by the Australian Research Council Discovery Early Career Award DE240100006.}}
\newcommand{\secref}[1]{Section~\ref{#1}}
\newcommand{\thmref}[1]{Theorem~\ref{#1}}
\newcommand{\lemref}[1]{Lemma~\ref{#1}}
\newcommand{\corref}[1]{Corollary~\ref{#1}}
\renewcommand{\algref}[1]{Algorithm~\ref{#1}}
\newcommand{\assref}[1]{Assumption~\ref{#1}}
\newcommand{\R}{\mathbb{R}} 
\newcommand{\N}{\mathbb{N}} 
\newcommand{\bigO}{\mathcal{O}} 
\DeclareMathOperator*{\argmin}{arg\,min} 
\newcommand{\grad}{\nabla} 
\newcommand{\prox}{\operatorname{prox}} 
\newcommand{\norm}[1]{\left\lVert #1 \right\rVert} 
\newcommand{\abs}[1]{\left\lvert #1 \right\rvert} 
\newcommand{\kappaef}{\kappa_{\textnormal{ef}}}
\newcommand{\kappaeg}{\kappa_{\textnormal{eg}}}
\newcommand{\gammainc}{\gamma_{\textnormal{inc}}}
\newcommand{\gammadec}{\gamma_{\textnormal{dec}}}
\newcommand{\gammaincbar}{\bar{\gamma}_{\textnormal{inc}}}
\newcommand{\flow}{f_{\textnormal{low}}}
\newcommand{\hlow}{h_{\textnormal{low}}}
\newcommand{\rmax}{r_{\textnormal{max}}}
\newcommand{\jmax}{J_{\textnormal{max}}}
\def\be{\begin{align}}
\def\ee{\end{align}}
\renewcommand{\b}[1]{\bm{#1}} 
\newcommand{\So}{\mathcal{S}} 
\newcommand{\Ps}{\mathcal{P}} 
\newcommand{\bzero}{\b{0}}
\newcommand{\bx}{\b{x}}
\newcommand{\bd}{\b{d}}
\newcommand{\by}{\b{y}}
\newcommand{\bz}{\b{z}}
\newcommand{\bs}{\b{s}}
\newcommand{\br}{\b{r}}
\newcommand{\bg}{\b{g}}
\newcommand{\bem}{\b{m}}
\newcommand{\mcS}{\mathcal{S}}
\newcommand{\mcM}{\mathcal{M}}
\newcommand{\mcK}{\mathcal{K}}
\newcommand{\mcCi}{\mathcal{C}_{i_\epsilon}}
\newcommand{\mcCMi}{\mathcal{C}^{M}_{i_\epsilon}}
\newcommand{\mcCUi}{\mathcal{C}^{U}_{i_\epsilon}}
\newcommand{\mcFi}{\mathcal{F}_{i_\epsilon}}
\newcommand{\mcSi}{\mathcal{S}_{i_\epsilon}}
\newcommand{\mcMi}{\mathcal{M}_{i_\epsilon}}
\newcommand{\mcUi}{\mathcal{U}_{i_\epsilon}}
\algrenewcommand\algorithmicrequire{\textbf{Input:}}
\algrenewcommand\algorithmicensure{\textbf{Output:}}
\newcommand{\modification}[1]{{\color{black}#1}} 
\begin{document}

\maketitle

\begin{abstract}
We consider the problem of optimizing the sum of a smooth, nonconvex function for which derivatives are unavailable, and a convex, nonsmooth function with easy-to-evaluate proximal operator.
Of particular focus is the case where the smooth part has a nonlinear least-squares structure.
We adapt two existing approaches for derivative-free optimization of nonsmooth compositions of smooth functions to this setting.
Our main contribution is adapting our algorithm to handle inexactly computed stationary measures, where the inexactness is adaptively adjusted as required by the algorithm (where previous approaches assumed access to exact stationary measures, which is not realistic in this setting).
Numerically, we provide two extensions of the state-of-the-art DFO-LS solver for nonlinear least-squares problems and demonstrate their strong practical performance.
\end{abstract}

\begin{keywords}
    derivative-free optimization; nonlinear least-squares; nonsmooth optimization; trust-region methods; 
\end{keywords}

\begin{MSCcodes}
 65K05, 90C30, 90C56
\end{MSCcodes}

\section{Introduction}
Derivative-free optimization (DFO), also referred to as black-box optimization, has received growing attention for minimizing a function without accessing its derivative information. The unavailability of derivatives frequently occurs in the field of computer science and engineering with a variety of applications \cite{conn2009introduction}. There are several popular approaches to DFO methods such as direct-search and model-based methods (see \cite{larson2019derivative} for a survey). Here we focus on the derivative-free model-based algorithms based on trust-region methods (see \cite{conn2000trust, yuan2015recent}), where at each iteration a trial step is computed (inaccurately) by solving a subproblem of minimizing a model function built through interpolation within a trust region.

In this paper, we consider the minimization of a composite objective function
\begin{align}
  \min_{\bx\in \R^n} \left\{\Phi(\bx)=f(\bx)+h(\bx)\right\},  \label{eq_prob}
\end{align}
where $f: \R^n \to \R$ is smooth and potentially nonconvex and $h: \R^n \to \R \cup \{\infty\}$ is a convex but possibly nonsmooth regularization term. We assume that the derivatives of $f$ are not accessible. Furthermore, $h$ is Lipschitz continuous in $\textnormal{dom} h \coloneqq \{\bx \in \R^{n}: h(\bx)<\infty\}$ and its proximal operator is assumed to be cheap to evaluate, which might be required for solving the trust-region subproblem. 
Problems of the form \eqref{eq_prob} are widely studied in data science (e.g.~\cite{Wright2022}), and in a more traditional DFO context arise for example in image analysis \cite{Ehrhardt2021}.

Motivated by two different algorithmic approaches in \cite{garmanjani2016trust} and \cite{grapiglia2016derivative}, both primarily designed for objectives of the form $f(\bx) + h(\bm{c}(\bx))$ where both $f$ and $\bm{c}$ are black boxes, we design two algorithms for solving \eqref{eq_prob} in a derivative-free trust-region framework. 
Our ultimate aim in this work is to specialize both approaches to \eqref{eq_prob} (i.e.~$\bm{c}(\bx)=\bx$) in the specific case of regularized least-squares problems, where $f(\bx)$ has a nonlinear least-squares structure, $f(\bx) = \sum_{i} r_i(\bx)^2$ (and derivatives of $r_i$ are not available).

While the approach from \cite{garmanjani2016trust} transfers readily to this new setting, more work is required for the approach from \cite{grapiglia2016derivative}.
Specifically, we must acknowledge that the stationary measure from \cite{grapiglia2016derivative} cannot be computed exactly, despite this quantity being needed in the algorithm.
Hence, we need to extend that approach to allow for inexact estimation of the stationary measure.
Our new algorithm extending \cite{grapiglia2016derivative} also allows for  more standard assumptions on the trust-region subproblem solution and incorporates more sophisticated trust-region management strategies which have been successful in practical DFO codes \cite{Powell2009,cartis2019improving}.
We provide a full first-order convergence and worst-case complexity analysis of this, while similar guarantees from \cite{garmanjani2016trust} follow with limited extra complications.
We numerically compare both approaches and provide open-source implementations which extend the smooth least-squares package DFO-LS \cite{cartis2019improving} (see Section~\ref{sec_implementation} for details).

\subsection{Existing Works}
Derivative-based algorithms for minimizing a nonsmooth objective function are well-studied. For example, the proximal point method \cite{beck2017first} for general nonsmooth optimization, the proximal gradient method \cite{beck2017first} and its accelerated variant \cite{attouch2016rate} for minimizing convex but nonsmooth objective function in the form of \eqref{eq_prob}. The main ideas behind these proximal algorithms (see \cite{parikh2014proximal} for a comprehensive survey) consist of approximating the nonsmooth structure by a smooth function and applying efficient algorithms for smooth optimization. Based on the trust-region framework, the smoothing trust-region method proposed in \cite{chen2013optimality} uses a sequence of parameterized smoothing functions to approximate the original nonsmooth objective function, where the smoothing parameter is updated before applying the trust region method at each iteration to ensure convergence. Another approach in \cite{cartis2011evaluation} directly applies the trust-region method to solve a class of nonsmooth, nonconvex optimization problems using an appropriate criticality measure. However, the complexity analysis in \cite{cartis2011evaluation} builds on the exact evaluations of the trust region subproblem and the criticality measure, which are not required in our algorithms. 
More recently, a derivative-based trust-region method for solving \eqref{eq_prob} with $h$ nonsmooth and possibly nonconvex was introduced \cite{Aravkin2022}.

There are also several model-based DFO methods for nonconvex, nonsmooth objectives, for which the survey \cite{Audet2020a} provides a thorough discussion (and Mesh Adaptive Direct Search \cite{Audet2006} is a widely used direct search method for this setting).
For problems where the nonsmoothness arises in a structured way (such as \eqref{eq_prob}), the most common model-based DFO setting is composite objectives of the form $h(\bm{c}(\bx))$ where $h$ is nonsmooth and $\bm{c}$ is a black-box function \cite{garmanjani2016trust,grapiglia2016derivative,Hare2020d,Larson2023}.\footnote{This structure is sometimes called either `gray-box' or `glass-box' optimization, since the structure of $h$ is typically assumed to be known.}
Here, we are particularly interested in the case where $f$ has a nonlinear least-squares structure, which also has a (smooth) composite objective form.
The works \cite{zhang2010derivative, cartis2019derivative} exploit this composite structure in the least-squares case.
Additionally, \cite{conejo2013global, hough2022model} study model-based DFO methods for optimisation with convex constraints (i.e.~where $h$ is the indicator function of a convex set). 

Here, our focus is on adapting two existing approaches to the setting \eqref{eq_prob}, and we describe both these approaches below.
Firstly, \cite{grapiglia2016derivative} considers the composite objective 
\begin{align}
    \Phi(\bx) = f(\bx) + h(\bm{c}(\bx)), \label{eq_prob_composite}
\end{align}
where $f$ and $\bm{c}$ are nonconvex, derivative-free functions, and $h$ is convex but potentially nonsmooth.
Here, interpolation-based models are formed for both $f$ and $\bm{c}$ and used within a trust-region method.
Global convergence and a worst-case complexity bound of $\bigO(|\log\epsilon|\, \epsilon^{-2})$ objective evaluations to reach $\epsilon$-approximate first-order stationary are then established.
However, because of the difficulty of solving the associated subproblems (the trust-region subproblem and calculating approximate stationary estimates), it was only implemented in the case where $f=0$ and $h(\bm{c}) = \max_{i} c_i$, in which case the subproblems reduce to linear programs.

Secondly, \cite{garmanjani2016trust}, as well as establishing complexity bounds for smooth model-based DFO, considers two nonsmooth problems:
\begin{itemize}
    \item Where $\Phi(\bx)$ is nonsmooth and nonconvex with no specific structure; and
    \item The composite form \eqref{eq_prob_composite}.
\end{itemize}
In the first case, they consider the setting where $\Phi$ can be approximated by a sequence of smooth functions (parametrized by some scalar $\mu\to 0^{+}$), and introduce a method where a sequence of smooth problems with decreasing values of $\mu$ is solved inexactly (using standard smooth model-based DFO methods).
They show global convergence and a worst-case complexity bound of $\bigO(\epsilon^{-3})$ objective evaluations to achieve $\epsilon$-approximate first-order stationary.
In the composite case, they propose a very similar method to \cite{grapiglia2016derivative} but with an improved worse-case complexity bound of $\bigO(\epsilon^{-2})$ objective evaluations.

\subsection{Contributions} 

We introduce two \emph{implementable} methods based on \cite{grapiglia2016derivative} and \cite{garmanjani2016trust} for solving \eqref{eq_prob}, particularly suited for the case where $f(\bx)$ has a nonlinear least-squares structure. 

Our adaptation of \cite{grapiglia2016derivative} (as specialized to the case $\bm{c}(\bx)=\bx$ in \eqref{eq_prob_composite}) is different in several notable respects:
\begin{itemize}
    \item It allows inexact calculation of stationary measures throughout the algorithm, where the level of allowable inexactness is adapted to the algorithm's progress and requirements. When a regularization term is present, computing the stationarity measure typically requires solving a convex optimization problem, and so exact evaluations (as required by \cite{grapiglia2016derivative}) are not available, but arbitrarily accurate approximations are available through suitable iterative methods. 
    \item The sufficient decrease condition for the trust-region subproblem in \cite{grapiglia2016derivative} is proportional to the global optimality gap. Here, we propose a new, simpler sufficient decrease condition to compute the trial step, analogous to the standard Cauchy decrease condition for the smooth case (but again relying on inexactly computed stationary estimates). 
    \item The overall algorithmic framework is based on that of DFO-GN \cite{cartis2019derivative}, which has a more sophisticated trust-region mechanism which has been demonstrated to work well in practice.
\end{itemize}
We show global convergence and a worst-case complexity analysis of this approach, matching the improved $\bigO(\epsilon^{-2})$ complexity bound from \cite{garmanjani2016trust}.

Adapting the approach \cite{garmanjani2016trust} to our setting requires less effort.
We demonstrate how to derive suitable smooth approximations to \eqref{eq_prob} based on the Moreau envelope for $h$, and show that the convergence and complexity results from \cite{garmanjani2016trust} apply to this setting.
We extend the global convergence theory from \cite{garmanjani2016trust} to additionally show Clarke stationary of all accumulation points of the algorithm.

Both algorithms require the solution of specific subproblems in each iteration. 
For all subproblems encountered by both methods, we demonstrate how a smoothed variant of FISTA \cite[Chapter 10.8.4]{beck2017first} can be applied to solve these in practice.

Finally, we implement both techniques for the regularized nonlinear least-squares setting by extending the state-of-the-art solver DFO-LS \cite{cartis2019improving}, which only handles the cases where $h=0$ or $h$ is the indicator function of a convex set \cite{hough2022model}.
Both methods outperform NOMAD \cite{le2011algorithm} (which implements the mesh adaptive direct search method) on this problem class.

\paragraph{Code Availability}
Both adaptations of DFO-LS are available on Github.\footnote{From \url{https://github.com/yanjunliu-regina/dfols} and \url{https://github.com/khflam/dfols/}. See Section~\ref{sec_implementation} for details of both implementations.}

\paragraph{Structure of paper}
We first introduce our variant of \cite{grapiglia2016derivative} in Section~\ref{sec_framework}.
Our new convergence and complexity analysis of this approach is given in Section~\ref{sec_convergence}, and we then explicitly adapt this method to the nonlinear least-squares case in Section~\ref{sec_nlls}.
We then describe our smoothing variant of \cite{garmanjani2016trust} in Section~\ref{sec_alg_smoothing}.
Our approach for calculating approximate subproblem solutions is given in Section~\ref{sec_sfista}, and numerical results are provided in Section~\ref{sec_numerics}.

\paragraph{Notation}
We use $\norm{\cdot}$ to be the Euclidean norm of vectors and the operator $2$-norm of matrices, and $B(\bx, \Delta) \coloneqq\{\by \in \R^n: \norm{y-x} \leq \Delta\}$ to be the closed ball centered at $x \in \R^n$ with radius $\Delta > 0$.

\section{Algorithmic Framework} \label{sec_framework}
In this section, we outline the general model-based DFO algorithmic framework for \eqref{eq_prob}. Our framework follows the structure given in \cite{cartis2019derivative}, except the modifications for including a regularization term based on \cite{grapiglia2016derivative}. We first introduce the first-order criticality measure for \eqref{eq_prob}, and then present our algorithm. 

\subsection{Criticality Measure}
Similar to \cite{cartis2011evaluation}, we linearize $f(\bx)$ and the argument of $h$ around any $\bx$ to give an approximation of $\Phi$
\begin{align}\label{eq_cri_l}
    l(\bx,\bs) \coloneqq f(\bx)+\grad f(\bx)^{T}\bs+h(\bx+\bs), \bs \in \R^{n}.
\end{align}

Then we define the quantity
\begin{align}\label{eq_cri_psi}
    \Psi_{r}(\bx) \coloneqq l(\bx,\b0) - \min_{\norm{\bs} \leq r} l(\bx,\bs).
\end{align}

Following \cite{yuan1985conditions}, $\Psi_{r}(\bx)$ is non-negative and continuous for all $\bx \in \R^n$, and we say that $\bx^*$ is a critical point of $\Phi$ if 
\begin{align}\label{eq_cri_cond}
    \Psi_{1}(\bx^*) = 0.
\end{align}
The condition \eqref{eq_cri_cond} is equivalent to other first-order optimal necessary conditions (see \cite[Lemma 2.1]{yuan1985conditions} or the discussion in \cite[Section 2]{cartis2011evaluation}, for example).

However, in the DFO setting, we cannot calculate $\Psi_{1}$ because $\grad f$ is not accessible. 
Instead, in our algorithm we follow \cite{conn2009global} and approximate $f$ with a model constructed by interpolation to points near the current iterate. 
Let $\bx_0$ denote the initial iterate and suppose that at $k$-th iteration, we form the model through sampling within a closed ball $B(\bx_k, \Delta_k)$, where $\Delta_k$ is bounded above by $\Delta_{\max}$. 
Our notion of model accuracy is the following \cite{conn2009global}:

\begin{definition} \label{def_fl_scalar}
A model $p_k \in C^{1}$ for $f \in C^{1}$ is \emph{fully linear} in $B(\bx_k, \Delta_k)$ if for any $\by \in B(\bx_k, \Delta_k)$, 
  \begin{align}
      \abs{f(\by)-p_{k}(\by)} &\leq \kappaef \Delta_{k}^2,  \label{eq_fl_f}\\
      \norm{\grad f(\by)-\grad p_{k}(\by)} &\leq \kappaeg \Delta_{k}, \label{eq_fl_g}
  \end{align}
  where $\kappaef$ and $\kappaeg$ are independent of $\by$, $\bx_k$ and $\Delta_k$.
\end{definition} 

The local fully linear model $p_k$ (for $f$) induces an approximate criticality measure below, where in \eqref{eq_cri_l} we replace $\grad f$ with $\grad p_k$, as used in \cite{grapiglia2016derivative}.

\begin{definition}
Given a continuously differentiable model $p_k:\R^n \rightarrow \R$ for $f$, for each $\bx \in \R^n$ we define 
  \begin{align} \label{eq_cri_tildel}
    \Tilde{l}(\bx,\bs) \coloneqq f(\bx)+\grad p_k(\bx)^{T}\bs+h(\bx+\bs), \bs \in \R^{n}, 
  \end{align}
  and for all $r>0$, we define
  \begin{align} \label{eq_cri_eta}
    \eta_{r}(\bx) \coloneqq \Tilde{l}(\bx,\b0) - \min_{\norm{\bs} \leq r} \Tilde{l}(\bx,\bs).
  \end{align}
\end{definition}

We choose $\eta_{1}$ as the \emph{approximate criticality measure} for our algorithm.
When $h \equiv 0$, this $\eta_1$ reduces to $\norm{\bg_{k}}$ as expected, and when $h \equiv I_C$ is an indicator function for a convex set $C$, $\eta_1$ reduces to the approximate criticality measure chosen in \cite{hough2022model}. 

Fortunately, if $p_k$ is a fully linear model for $f$, the error between the true criticality measure $\Psi_{1}$ and our approximation $\eta_{1}$ is controlled.

\begin{lemma} \label{lem_crit_perturb}
Suppose that $f \in C^{1}$ and $h$ is continuous. Assume that $p_k: \R^n \rightarrow \R$ is a fully linear model of f with respect to constant $\kappaef$ and $\kappaeg$ on the ball $B(\bx_{k},\Delta_{k})$. Then
\begin{align} \label{eq_crit_perturb}
    \abs{\Psi_1(\by)-\eta_1(\by)} \leq \kappaeg \Delta_k,
\end{align}
for any $\by \in B(\bx_{k},\Delta_{k})$.
\end{lemma}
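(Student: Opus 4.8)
The plan is to bound the difference between the two criticality measures by tracing through their definitions and using the fully linear gradient bound \eqref{eq_fl_g}. Both $\Psi_1(\by)$ and $\eta_1(\by)$ are defined as the decrease from $\bs=\b0$ to the minimizer over the unit ball of the respective linearizations $l(\by,\cdot)$ and $\tilde{l}(\by,\cdot)$. Crucially, these two linearizations share the same constant term $f(\by)$ and the same nonsmooth term $h(\by+\bs)$; they differ only in the linear term, where $l$ uses $\grad f(\by)^T\bs$ and $\tilde{l}$ uses $\grad p_k(\by)^T\bs$. So the pointwise difference is exactly $l(\by,\bs)-\tilde{l}(\by,\bs) = (\grad f(\by)-\grad p_k(\by))^T\bs$, and on the unit ball $\norm{\bs}\le 1$ this is bounded in absolute value by $\norm{\grad f(\by)-\grad p_k(\by)}\le \kappaeg\Delta_k$ using \eqref{eq_fl_g}.

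\textbf{Main argument.} First I would write $\Psi_1(\by)-\eta_1(\by) = [l(\by,\b0)-\tilde{l}(\by,\b0)] - [\min_{\norm{\bs}\le 1} l(\by,\bs) - \min_{\norm{\bs}\le 1}\tilde{l}(\by,\bs)]$. Since $l(\by,\b0)=\tilde{l}(\by,\b0)=f(\by)+h(\by)$, the first bracket vanishes, leaving $\Psi_1(\by)-\eta_1(\by) = \min_{\norm{\bs}\le 1}\tilde{l}(\by,\bs) - \min_{\norm{\bs}\le 1} l(\by,\bs)$. The task then reduces to bounding the difference of two minima of functions that are uniformly close. I would invoke the standard fact that if two functions differ by at most $\delta$ pointwise on a set, their minima over that set differ by at most $\delta$; here $\delta = \kappaeg\Delta_k$ from the observation above. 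This gives $\abs{\Psi_1(\by)-\eta_1(\by)}\le \kappaeg\Delta_k$ directly.

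\textbf{The one point requiring care} is the lemma on differences of minima. To make it rigorous, let $\bs^*$ attain $\min_{\norm{\bs}\le 1} l(\by,\bs)$ and $\tilde{\bs}^*$ attain $\min_{\norm{\bs}\le 1}\tilde{l}(\by,\bs)$; existence follows from continuity of $f$ and $h$ on the compact unit ball. Then $\tilde{l}(\by,\tilde{\bs}^*)\le \tilde{l}(\by,\bs^*)\le l(\by,\bs^*)+\kappaeg\Delta_k$, and symmetrically $l(\by,\bs^*)\le l(\by,\tilde{\bs}^*)\le \tilde{l}(\by,\tilde{\bs}^*)+\kappaeg\Delta_k$, which together yield $\abs{\min\tilde{l}-\min l}\le \kappaeg\Delta_k$. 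This is routine, so I do not expect a genuine obstacle; the proof is essentially a two-line computation once the shared-terms observation is made. The only modeling assumption I rely on beyond \eqref{eq_fl_g} is continuity of $h$ (stated in the hypotheses) to guarantee the minima are attained, and the fact that $\by\in B(\bx_k,\Delta_k)$ so that the fully linear gradient bound applies at $\by$.
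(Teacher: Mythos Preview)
Your proposal is correct and follows essentially the same argument as the paper: both reduce $\Psi_1(\by)-\eta_1(\by)$ to the difference of the two minima (since the constant terms cancel), take the minimizer of one function and evaluate the other at it, and bound the resulting inner product $(\grad f(\by)-\grad p_k(\by))^T\bs$ by $\kappaeg\Delta_k$ via Cauchy--Schwarz and \eqref{eq_fl_g}. The paper's write-up is slightly more explicit in chaining the inequalities, but the logic is identical to yours.
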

\begin{proof}
  This proof is based on \cite[Theorem 1]{grapiglia2016derivative}. Take any $\by \in B(\bx_{k},\Delta_{k})$. Since $p_k$ is a fully linear model of $f$ on $B(\bx_{k},\Delta_{k})$, it follows from \eqref{eq_fl_g} that 
\begin{align} \label{eq_grad_temp1}
    \norm{\grad f(\by)-\grad p_k(\by)} \leq \kappaeg \Delta_{k}.
\end{align}
Since $l(\by,\bs)$ is continuous with respect to $\bs$ on $B(\b0,1)$
, consider $\Tilde{\bs} \in B(\b0,1)$ such that
\begin{align}
    \min_{\norm{\bs} \leq 1}l(\by,\bs)=l(\by,\Tilde{\bs}).
\end{align}
Then from \eqref{eq_cri_psi}, \eqref{eq_cri_eta} and \eqref{eq_grad_temp1}, it follows that
\begin{align*} 
    \Psi_1(\by)-\eta_1(\by) &= \left(l(\by,\b0) - \min_{\norm{\bs} \leq 1} l(\by,\bs)\right) - \left(\Tilde{l}(\by,\b0) - \min_{\norm{\bs} \leq 1} \Tilde{l}(\by,\bs)\right), \\
    &= \min_{\norm{\bs} \leq 1} \Tilde{l}(\by,\bs)-\min_{\norm{\bs} \leq 1} l(\by, \bs), \\
    &= \min_{\norm{\bs} \leq 1} \Tilde{l}(\by,\bs)-l(\by,\Tilde{\bs}), \\
    &\leq \Tilde{l}(\by,\Tilde{\bs})-l(\by,\Tilde{\bs}), \\
    &= (\grad p(\by)-\grad f(\by))^{T}\Tilde{\bs}, \\
    &\leq \norm{\grad p(\by)-\grad f(\by)}, \\
    &\leq \kappaeg \Delta_k. \stepcounter{equation}\tag{\theequation}\label{eq_crit_temp1}
\end{align*}
Similarly, considering $\bar{\bs} \in B(\b0,1)$ such that 
\begin{align}
    \min_{\norm{\bs} \leq 1}\Tilde{l}(\by,\bs)=l(\by,\bar{\bs}),
\end{align}
We obtain the inequality
\begin{align} \label{eq_crit_temp2}
    \eta_1(\by)-\Psi_1(\by) \leq \kappaeg \Delta_k.
\end{align}
Hence, combining \eqref{eq_crit_temp1} and \eqref{eq_crit_temp2}, we conclude that \eqref{eq_crit_perturb} holds.
\end{proof}

\subsection{Main Algorithm}
Our algorithm for solving \eqref{eq_prob} is based on a trust-region framework \cite{conn2000trust}. 
At each iteration, we construct a model $m_k$ to approximate our objective function $\Phi$ around the current iterate $\bx_k$, and in particular within a trust region $B(\bx_k, \Delta_k)$. 
Then we find a tentative new point $\bx_k + \bs_k$ by approximately solving the trust-region subproblem
\begin{align} \label{eq_trsub}
    \bs_{k} \approx \argmin_{\norm{\bs} \leq \Delta_k} m_k(\bx_k+\bs).
\end{align}
We measure the sufficient decrease in the objective function using the ratio
\begin{align} \label{eq_ratio}
    R_k \coloneqq \frac{\Phi(\bx_k) - \Phi(\bx_k + \bs_k)}{m_k(\bx_k) - m_k(\bx_k + \bs_k)},
\end{align}
which is used to determine the next iterate $\bx_{k+1}$ and update the trust region radius $\Delta_k > 0$. If $R_k$ is sufficiently large, we accept this step (i.e. $\bx_{k+1} = \bx_{k} + \bs_{k}$) and increase $\Delta_k$, otherwise we reject this step (i.e. $\bx_{k+1} = \bx_{k}$) and decrease $\Delta_k$ if necessary.

To construct our model, we first take a quadratic approximation $p_k$ for $f$
\begin{align} \label{eq_pk}
    f(\bx_{k}+\bs) \approx p_k(\bx_{k}+\bs) = f(\bx_k)+\bg_k^{T}\bs+\frac{1}{2}\bs^{T}H_k\bs,
\end{align}
for some choices $\bg_k\in\R^n$ and $H_k\in\R^{n\times n}$. 
We can then construct our model $m_k$ for $\Phi$ as
\begin{align} \label{eq_model}
    \Phi(\bx_k + \bs) \approx m_k(\bx_{k}+\bs) \coloneqq p_k(\bx_{k}+\bs) + h(\bx_k + \bs).
\end{align}
This construction relies on our assumption that the regularizer $h$ is known and easy to evaluate.
We note the similarity between \eqref{eq_model} and the more general model used for composite nonsmooth optimization in \cite{grapiglia2016derivative}.

In practice, the model $p_k$ is constructed using the techniques from \cite{conn2009introduction}: we maintain a set of interpolation points $Y_k$ containing $\bx_k$, and approximate $\bg_k$ and $H_k$ by enforcing the interpolation conditions
\begin{align} \label{eq_interpolation}
    f(\by) = p_k(\by), \quad \forall \by\in Y_k.
\end{align}
This model construction allows $p_k$ to be fully linear provided $Y_k$ satisfies certain geometric conditions:

\begin{definition} \label{def_lambda_poised}
  Let $\{\Lambda_1(\bx), \ldots, \Lambda_{|Y_k|}(\bx)\}$ be the set of Lagrange polynomials for $Y_k$. 
  For some $\Lambda \geq 1$, $Y_k$ is \emph{$\Lambda$-poised} in some ball $B(\bx,\Delta)$ if $Y_k \subset B(\bx,\Delta)$ and
  \begin{align}
      \max_{t=1,\dots,|Y_k|}\max_{\by \in B(\bx,\Delta)}|\Lambda_t(\by)| \leq \Lambda. \label{eq_lambda_poised}
  \end{align}
\end{definition}

\begin{algorithm}[H]
\scriptsize
\begin{algorithmic}[1]
\Require Starting point $\bx_0\in\operatorname{dom} h$ 
and the initial trust-region radius $\Delta_0^{\textnormal{init}}>0$.
\vspace{0.2em}
\Statex \underline{Parameters:} maximum trust-region radius $\Delta_{\max} \geq \Delta_0^{\textnormal{init}}$ with $\Delta_{\max}>1$, trust region radius scaling factors $0<\gammadec<1<\gammainc \leq \gammaincbar$ and $0 < \alpha_1 < \alpha_2 < 1$, criticality constants $\epsilon_C,\mu>0$, acceptance thresholds $0<\beta_1\leq \beta_2<1$, trust region subproblem constant $c_1 \coloneqq \min\left\{1,\Delta_{\max}^{-2}\right\} / 2$, safety constants $0<\omega_{S}<1$ and $0<\gamma_{S}<2 e_3 c_1 / (1 + \sqrt{1+2e_3c_1})$, poisedness constant $\Lambda \geq 1$, accuracy level $0 < e_1 < 1$, $e_2 > 0$ and $0 < e_3 < 1$.
\State Build an initial interpolation set $Y_0$ of size $n+1$, with $\bx_0 \in Y_0$. Set $\rho_0^{\textnormal{init}} =\Delta_0^{\textnormal{init}}$.
\For{$k=0,1,2,\ldots$} \label{algline_forloop}
    \State Given $\bx_k$ and $Y_k$, solve the interpolation problem in \eqref{eq_interpolation} to get $J_k^{\textnormal{init}}$ and build the model $m_k^{\textnormal{init}}$ in \eqref{eq_model}. 
    \State  \label{algline_eta}Compute $\eta_1^{\textnormal{init}}(\bx_k)=\Tilde{l}^{\textnormal{init}}(\bx_k,\b0)-\Tilde{l}^{\textnormal{init}}(\bx_k,\bd_k)$, where 
    \begin{align} \label{eq_min_criticality_alg1}
        \bd_k \approx \argmin_{\norm{\bd} \leq 1} \Tilde{l}^{\textnormal{init}}(\bx_k, \bd),
    \end{align}
    and
    \begin{align}
        \Tilde{l}^{\textnormal{init}}(\bx_k,\bd)=f(\bx_k)+(\bg_k^{\textnormal{init}})^{T}\bd
        +h(\bx_k+\bd).
    \end{align}
    Here $\bd_k$ is calculated approximately so that the approximate estimate $\overline{\eta}_1^{\textnormal{init}}(\bx_k)$ satisfies
    \begin{align} \label{eq_algdfo_eta_error}
        \eta_1^{\textnormal{init}}(\bx_k)-\overline{\eta}_1^{\textnormal{init}}(\bx_k)<\min\{(1-e_1)\epsilon_C,e_2\Delta_k^{\textnormal{init}}\}
    \end{align}
    \If{$\overline{\eta}_1^{\textnormal{init}}(\bx_{k}) \leq e_1\epsilon_C$}
        \State \underline{Criticality Phase}: using \algref{alg_geo}, modify $Y_k$ and find $\Delta_k \leq \Delta_k^{\textnormal{init}}$ and $\overline{\eta}_1(\bx_k)$ such that $Y_k$ is $\Lambda$-poised in $B(\bx_k, \Delta_k)$, $\Delta_k \leq \mu \overline{\eta}_1(\bx_k)$ and $\eta_1(\bx_k) - \overline{\eta}_1(\bx_k) < e_2\Delta_k$. Set $\rho_k=\min(\rho_k^{\textnormal{init}}, \Delta_k)$. 
    \Else
        \State Set $m_k=m_k^{\textnormal{init}}$, $\Delta_k=\Delta_k^{\textnormal{init}}$, $\rho_k=\rho_k^{\textnormal{init}}$ and
        $\overline{\eta}_1(\bx_k)=\overline{\eta}_1^{\textnormal{init}}(\bx_k)$
    \EndIf
    \State \label{algline_trust_region}Approximately solve \eqref{eq_trsub} to get a step $\bs_k$ satisfying $\norm{\bs_k} \leq \Delta_k$, $m_k(\bx_{k}+\bs_k) \leq m_k(\bx_{k})$ and 
    \begin{align} \label{eq_mk_dec}
    m_k(\bx_k)-m_k(\bx_{k}+\bs_k) \geq e_3c_1\overline{\eta}_1(\bx_k)\min\left\{\Delta_k,\frac{\overline{\eta}_1(\bx_k)}{\max\bigl\{1,\norm{H_k}\bigr\}}\right\}.
\end{align}
    \State \label{algline_tau}\modification{Calculate $\tau_k \coloneqq \min\left\{\overline{\eta}_1(\bx_k) / (\norm{\bg_k}+L_h), 1\right\}$.} 
    \If{$\norm{\boldsymbol{s}_k}<  \modification{\tau_k} \gamma_S \rho_k$}\label{algline_safety}
        \State\underline{Safety Phase}: Set $\boldsymbol{x}_{k+1}=\boldsymbol{x}_k$ and $\Delta_{k+1}^{\textnormal{init}}=\max\bigl\{\rho_k, \omega_S\Delta_k\bigr\}$, and form $Y_{k+1}$ by making $Y_k$ $\Lambda$-poised in $B(\boldsymbol{x}_{k+1},\Delta_{k+1}^{\textnormal{init}})$.
        \State If $\Delta_{k+1}^{\textnormal{init}}=\rho_k$, set $(\rho_{k+1}^{\textnormal{init}},\Delta_{k+1}^{\textnormal{init}})=(\alpha_1\rho_k,\alpha_2\rho_k)$, otherwise set $\rho_{k+1}^{\textnormal{init}}=\rho_{k}$.
        \State \textbf{goto} Line \ref{algline_forloop}.
    \EndIf
    \State Evaluate $f(\bx_k+\bs_k)$ and calculate ratio $R_k$ in \eqref{eq_ratio}
    \State \label{algline_tau_delta1}Accept/reject step and update trust region radius: set
    \begin{align*}
        \bx_{k+1}=\begin{cases} 
            \bx_k+\bs_k & R_k \geq \beta_1, \\
            \bx_k &  R_k < \beta_1,
        \end{cases}
        \quad \text{and  } 
        \Delta_{k+1}^{\textnormal{init}}=\begin{cases} 
            \min(\max(\gammainc\Delta_k, \gammaincbar\norm{\bs_k}),\Delta_{\max}), & R_k \geq \beta_2, \\
            \max(\gammadec\Delta_k,\norm{\bs_k},\rho_k), &  \beta_1 \leq R_k < \beta_2, \\
        \max(\min(\gammadec\Delta_k,\norm{\bs_k})/\modification{\tau_k},\rho_k), & R_k < \beta_1.
    \end{cases}
\end{align*}
\If{$R_k \geq \beta_1$}
\State Form $Y_{k+1}=Y_{k} \cup \{\boldsymbol{x}_{k+1}\} \setminus \{\boldsymbol{y}_t\}$ for some $\boldsymbol{y}_t \in Y_{k}$ and set $\rho_{k+1}^{\textnormal{init}}=\rho_k$.
\ElsIf{$Y_k$ is not $\Lambda$-poised in $B(\boldsymbol{x}_k,\Delta_k)$}
\State \underline{Model Improvement Phase}: Form $Y_{k+1}$ by making $Y_k$ $\Lambda$-poised in $B(\boldsymbol{x}_{k+1},\Delta_{k+1}^{\textnormal{init}})$ and set $\rho_{k+1}^{\textnormal{init}}=\rho_k$.
\Else{  $[R_k<\beta_1$ \textbf{and} $Y_k$ is $\Lambda$-poised in $B(\boldsymbol{x}_k,\Delta_k)]$}
\State \label{algline_tau_delta2}\underline{Unsuccessful Phase}: Set $Y_{k+1}=Y_{k}$, and if 
$\Delta_{k+1}^{\textnormal{init}}=\rho_{k}$, set $(\rho_{k+1}^{\textnormal{init}},\Delta_{k+1}^{\textnormal{init}})=(\alpha_1\rho_k,\alpha_2\rho_k)$, otherwise set $\rho_{k+1}^{\textnormal{init}}=\rho_k$.
\EndIf
\EndFor
\end{algorithmic}
\caption{DFO-LSR: a model-based DFO method for \eqref{eq_prob}.}
\label{alg_dfo}
\end{algorithm}

Intuitively, the set $Y_k$ has `good geometry' if $Y_k$ is $\Lambda$-poised with a small $\Lambda$. 
The exact form of interpolation used to construct $p_k$ is left open for now, and will be revisited in the context of regularized nonlinear least-squares problems in \secref{sec_nlls}.
For now, in light of the techniques given in \cite[Chapter 6]{conn2009introduction} we simply assume the existence of procedures which can verify whether or not a set $Y_k$ is $\Lambda$-poised, and if it is not, add and remove points from $Y_k$ until it becomes $\Lambda$-poised.

\begin{remark}
	If $p_k$ is a linear model, the techniques from \cite[Section 4]{hough2022model} allow Definition~\ref{def_lambda_poised} to be weakened to only maximizing over $\by\in B(\bx,\Delta)\cap C$ in \eqref{eq_lambda_poised} for any closed convex set $C$ with nonempty interior.
	In our case, we might take $C=\operatorname{dom} h$, for example.
\end{remark}

Our main algorithm for solving \eqref{eq_prob} is presented in \algref{alg_dfo}.
The overall structure is based on DFO-GN \cite[Algorithm 1]{cartis2019derivative}, which is designed for nonlinear least-squares minimization (c.f.~\secref{sec_nlls}). 
Motivated by the practical performance of DFO-GN, we keep important algorithmic features not present in other similar methods (e.g.~\cite[Chapter 10]{conn2009introduction}) such as the safety phase and the maintenance of a lower bound $\rho_k$ on the trust region radius $\Delta_k$. 

In particular, we extend the safety phase from DFO-GN (which originates in \cite{powell2009bobyqa}), which provides a way to detect insufficient decrease generated by the step size $\norm{\bs_k}$ before evaluating $f(\bx_k)$ and $R_k$ \eqref{eq_ratio}. 
Specifically, to accommodate the proof of \lemref{lem_step_bound}, we introduce a new variable $\tau_k$ in the generalized safety phase. 
This $\tau_k$ is calculated in line~\ref{algline_tau} and then used in the entry condition (line~\ref{algline_safety}) and the update rule for $\Delta_k$  (lines~\ref{algline_tau_delta1} and~\ref{algline_tau_delta2}). 
If $h \equiv 0$, then $\tau_k = 1$ and the generalised safety phase is equivalent to the original safety phase from DFO-GN. 

Of course, we also use our new criticality measure $\eta_1(\bx_k)$ instead of $\norm{\bg_k}$ in DFO-GN algorithm, just as in \cite{grapiglia2016derivative}. 
Unfortunately, to calculate the value of $\eta_{1}(\bx_k)$, we need to solve a minimization subproblem as defined in \eqref{eq_cri_eta}. 
So to make our framework practical, we therefore extend \cite{grapiglia2016derivative} to allow an approximate estimate of $\eta_{1}(\bx_k)$ up to a predetermined accuracy level (line~\ref{algline_eta}). 
Note that if we compute $\bd_k$ in \eqref{eq_min_criticality_alg1} inexactly, then we automatically have $\overline{\eta}_1(\bx_k) \leq \eta_1(\bx_k)$.

With the nonsmooth term in our model $m_k$ \eqref{eq_model}, our trust-region subproblem \eqref{eq_trsub} and criticality estimation subproblem \eqref{eq_min_criticality_alg1} are not straightforward to solve to the required accuracy.
We discuss this issue further in Sections~\ref{sec_nlls} and \ref{sec_sfista}.

We describe the geometry-improvement step used in the criticality phase below, which is an adaptation of \cite[Algorithm 2]{cartis2019derivative}. 
Compared to \algref{alg_dfo}, the criticality measure is evaluated approximately up to a different accuracy in line~\ref{alg2line_eta}.

\begin{algorithm}[t]
\scriptsize
\begin{algorithmic}[1]
\Require Iterate $\bx_k$, initial set $Y_k$ and trust region radius $\Delta_k^{\textnormal{init}}$.
\vspace{0.2em}
\Statex \underline{Parameters:} $\mu>0$, $\omega_C \in (0,1)$ and poisedness constant $\Lambda>0$.
\State Set $Y_k^{(0)}=Y_{k}$.
\For{i=1, 2, ...}
    \State Form $Y_k^{(i)}$ by modifying $Y_k^{(i-1)}$ until it is $\Lambda$-poised  in $B(\bx_k,\omega_C^{i-1}\Delta_k^{\textnormal{init}})$.
    \State Solve the interpolation system for $Y_k^{(i)}$ to get $J_k^{(i)}$, and form $m_k^{(i)}$ in \eqref{eq_model}.
    \State  \label{alg2line_eta}Compute $\eta_1^{(i)}(\bx_k)=\Tilde{l}^{(i)}(\bx_k,\b0)-\Tilde{l}^{(i)}(\bx_k,\Tilde{\bd}_k)$, where 
    \begin{align} \label{eq_min_criticality_alg2}
        \Tilde{\bd}_k \approx \argmin_{\norm{\bd} \leq 1} \Tilde{l}^{(i)}(\bx_k, \bd),
    \end{align}
    and 
    \begin{align}
        \Tilde{l}^{(i)}(\bx_k,\bd)=f(\bx_k)+(\bg_k^{(i)})^{T}\bd + h(\bx_k+\bd).
    \end{align}
    Here $\Tilde{\bd}_k$ is calculated approximately so that the approximate estimate $\overline{\eta}_1^{(i)}(\bx_k)$ satisfies 
    \begin{align} \label{eq_alggeo_eta_error}
        \eta_1^{(i)}(\bx_k)-\overline{\eta}_1^{(i)}(\bx_k)<e_2\omega_C^{i-1}\Delta_k^{\textnormal{init}}
    \end{align}
    \If{$\omega_C^{i-1}\Delta_k^{\textnormal{init}}\leq \mu\overline{\eta}_1^{(i)}(\boldsymbol{x}_k)$}
        \State\Return $Y_k^{(i)}$, $m_k^{(i)}$, $\Delta_k \leftarrow \omega_{C}^{i-1}\Delta_k^{\textnormal{init}}$, $\overline{\eta}_1(\boldsymbol{x}_k) \leftarrow \overline{\eta}_1^{(i)}(\boldsymbol{x}_k)$. 
    \EndIf
\EndFor
\end{algorithmic}
\caption{Geometry-Improvement for Criticality Phase}
\label{alg_geo}
\end{algorithm}

\section{Convergence and Complexity Analysis} \label{sec_convergence}
Throughout this section, we consider the following standard assumptions:

\begin{assumption} \label{ass_smooth}
	The function $f$ \eqref{eq_prob} is bounded below by $\flow$ and  continuously differentiable in the convex hull of  $\mathcal{B} \coloneqq \cup_k B(\boldsymbol{x}_k,\Delta_{\max})$, where $\grad f$ is $L_{\grad f}$-Lipschitz continuous in $\mathcal{B}$.
\end{assumption}

\begin{assumption} \label{ass_regu}
  $h: \mathbb{R}^{n} \rightarrow \mathbb{R}$ is convex and possibly nonsmooth. We also assume that $h(\boldsymbol{x})$ is bounded below by $\hlow$ and Lipschitz continuous with Lipschitz constant $L_{h}$ in $\operatorname{dom} h$.
\end{assumption}

We also need the assumption that the model Hessians are uniformly bounded above for the trust-region subproblem \eqref{eq_trsub}.
\begin{assumption} \label{ass_boundedhess}
  There exists $\kappa_H \geq 1$ such that $\|H_k\| \leq \kappa_H - 1$ for all $k$.
\end{assumption}

Lastly, we use the below result to link the $\Lambda$-poisedness of $Y_k$ with the accuracy of the model $p_k$.

\begin{lemma} \label{lem_kappa_generic}
	If \assref{ass_smooth} holds and $Y_k$ is $\Lambda$-poised in $B(\bx_k,\Delta_k)$, then $p_k$ is a fully linear model for $f$.
\end{lemma}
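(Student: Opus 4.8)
The plan is to reduce the claim to the classical interpolation-error estimates for $\Lambda$-poised sets, in the style of \cite[Chapter 3]{conn2009introduction}. Since the interpolation set $Y_k$ has $n+1$ points and satisfies the interpolation conditions \eqref{eq_interpolation}, the affine part of $p_k$ is a fully determined linear interpolant of $f$ on $Y_k$; I would first write $p_k(\bx_k+\bs)=f(\bx_k)+\bg_k^{T}\bs+\tfrac{1}{2}\bs^{T}H_k\bs$ as in \eqref{eq_pk} and isolate the linear piece, treating the quadratic term separately using the bound $\norm{H_k}\leq \kappa_H-1$ from \assref{ass_boundedhess}.

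The core estimate comes from Taylor's theorem. For each $\by_i\in Y_k$, \assref{ass_smooth} gives $f(\by_i)=f(\bx_k)+\grad f(\bx_k)^{T}(\by_i-\bx_k)+O(L_{\grad f}\norm{\by_i-\bx_k}^2)$, and subtracting the interpolation identity $f(\by_i)=p_k(\by_i)$ yields a linear system $M(\bg_k-\grad f(\bx_k))=\bee$, where $M$ has rows $(\by_i-\bx_k)^{T}$ and $\norm{\bee}$ is controlled by $\tfrac{1}{2}L_{\grad f}\Delta_k^2$ (absorbing the $O(\Delta_k^2)$ Hessian contribution, bounded via \assref{ass_boundedhess}, into the same order). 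I would then rescale by $\Delta_k$, writing $\hat M=M/\Delta_k$, so that $\norm{\bg_k-\grad f(\bx_k)}\leq \norm{\hat M^{-1}}\,\norm{\bee}/\Delta_k\leq \norm{\hat M^{-1}}\,C L_{\grad f}\Delta_k$.

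The key remaining step is to bound $\norm{\hat M^{-1}}$ by a multiple of $\Lambda$, and this is exactly where $\Lambda$-poisedness enters: by the equivalence between the Lagrange-polynomial definition in \defref{def_lambda_poised} and control of the scaled interpolation matrix (see \cite[Theorem 3.14]{conn2009introduction}), one obtains $\norm{\hat M^{-1}}\leq c\sqrt{n}\,\Lambda$ for an absolute constant $c$. This produces the gradient bound \eqref{eq_fl_g} with a $\kappaeg$ depending only on $c$, $n$, $\Lambda$, $L_{\grad f}$ and $\kappa_H$, and in particular independent of $\by$, $\bx_k$ and $\Delta_k$. The function-value bound \eqref{eq_fl_f} then follows from the gradient bound: for $\by\in B(\bx_k,\Delta_k)$, expanding $f(\by)-p_k(\by)$ via the integral form of the Taylor remainder and using both $\norm{\bg_k-\grad f(\bx_k)}=O(\Delta_k)$ and the $O(\Delta_k^2)$ second-order terms gives \eqref{eq_fl_f} with a suitable $\kappaef$.

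The main obstacle is precisely the translation from the Lagrange-polynomial formulation of $\Lambda$-poisedness to the quantitative bound on $\norm{\hat M^{-1}}$; this is the technical heart of the argument and is best handled by invoking the established equivalence in \cite{conn2009introduction} rather than re-deriving it from scratch. A minor additional point is verifying that the bounded quadratic term $\tfrac{1}{2}\bs^{T}H_k\bs$ does not spoil full linearity, which is immediate from \assref{ass_boundedhess} since on $B(\bx_k,\Delta_k)$ it contributes $O(\Delta_k^2)$ to the value and $O(\Delta_k)$ to the gradient, and so is absorbed into $\kappaef$ and $\kappaeg$.
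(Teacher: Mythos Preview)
Your sketch is essentially a concrete instantiation of what the paper leaves entirely to citation: the paper's proof of \lemref{lem_kappa_generic} does not argue anything, it simply points to the relevant theorems in \cite{conn2009introduction} (Theorems~2.11, 2.12, or 5.4, depending on the model construction) and defers the least-squares case to \lemref{lem_kappa}. So your approach is not really \emph{different} from the paper's---it is the standard argument behind the cited results, written out rather than invoked.

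One point to flag: you appeal to \assref{ass_boundedhess} to control the quadratic term $\tfrac{1}{2}\bs^{T}H_k\bs$, but the lemma as stated assumes only \assref{ass_smooth}. The paper sidesteps this by treating the lemma as a placeholder whose precise constants depend on the (unspecified) model construction; in each concrete case it considers, the Hessian bound is either trivial (linear models, $H_k=0$) or itself a consequence of $\Lambda$-poisedness (the least-squares construction, \lemref{lem_kappa}). Your argument is correct once some uniform bound on $\norm{H_k}$ is available, but strictly speaking you are importing an assumption the lemma does not list. If you want to match the paper's level of generality, the cleanest fix is to note---as the paper does---that the result is model-construction dependent and cite the appropriate theorem, rather than committing to one specific setup.
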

\begin{proof}
	Different versions of this result are applicable depending on the specific model construction used for $p_k$, e.g.~\cite[Theorems 2.11 \& 2.12 or Theorem 5.4]{conn2009introduction}. When we specialize to nonlinear least-squares problems in \secref{sec_nlls}, this will be given by \lemref{lem_kappa}.
\end{proof}

\subsection{Global Convergence Analysis}

The following lemma ensures that unless the current iterate is a critical point, \algref{alg_geo} for the criticality phase terminates in finite loops. It also provides a bound for the trust region radius $\Delta_k$.

\begin{lemma} \label{lem_delta_bound}
Suppose Assumptions~\ref{ass_smooth} and \ref{ass_regu} hold and $\Psi_1(\bx_k) \geq \epsilon > 0$. Then for any $\mu >0$ and $\omega_C \in (0,1)$, the criticality phase in \algref{alg_geo} terminates in finite time with $Y_k$ $\Lambda$-poised in $B(\bx_k,\Delta_k)$ and $\Delta_k \leq \mu \overline{\eta}_1(\bx_k)$ for any $\mu>0$ and $\eta_1(\bx_k) - \overline{\eta}_1(\bx_k) < e_2\Delta_k$. 
We also have the bound
\begin{align} \label{eq_delta_bound}
    \min\left(\Delta_k^{\textnormal{init}},\frac{\omega_C\epsilon}{\kappaeg+1/\mu+e_2}\right) \leq \Delta_k \leq \Delta_k^{\textnormal{init}}.
\end{align}
\end{lemma}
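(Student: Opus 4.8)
The plan is to dispose of the two trivial parts first and then concentrate on finite termination together with the lower bound in \eqref{eq_delta_bound}. The upper bound $\Delta_k \leq \Delta_k^{\textnormal{init}}$ is immediate: on termination at the $i$-th inner iteration the returned radius is $\Delta_k = \omega_C^{i-1}\Delta_k^{\textnormal{init}}$ with $i \geq 1$ and $\omega_C \in (0,1)$. The three qualitative output properties are also essentially built into \algref{alg_geo}: line~3 makes $Y_k^{(i)}$ $\Lambda$-poised in $B(\bx_k, \omega_C^{i-1}\Delta_k^{\textnormal{init}}) = B(\bx_k, \Delta_k)$; the termination test gives $\Delta_k \leq \mu \overline{\eta}_1(\bx_k)$ directly; and \eqref{eq_alggeo_eta_error} evaluated at the terminating index yields $\eta_1(\bx_k) - \overline{\eta}_1(\bx_k) < e_2 \Delta_k$. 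So all the real work lies in bounding the radius below and ruling out an infinite loop.

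The heart of the argument is a single per-iteration estimate valid whenever the inner loop reaches iteration $i$ without terminating. Writing $\Delta^{(i)} \coloneqq \omega_C^{i-1}\Delta_k^{\textnormal{init}}$, I would first use that $Y_k^{(i)}$ is $\Lambda$-poised in $B(\bx_k, \Delta^{(i)})$ to conclude, via \lemref{lem_kappa_generic}, that $p_k^{(i)}$ is fully linear there, and then invoke \lemref{lem_crit_perturb} at $\by = \bx_k$ to obtain $\eta_1^{(i)}(\bx_k) \geq \Psi_1(\bx_k) - \kappaeg \Delta^{(i)} \geq \epsilon - \kappaeg \Delta^{(i)}$. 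Combining this with the accuracy bound \eqref{eq_alggeo_eta_error}, which gives $\overline{\eta}_1^{(i)}(\bx_k) > \eta_1^{(i)}(\bx_k) - e_2 \Delta^{(i)}$, produces the lower bound $\overline{\eta}_1^{(i)}(\bx_k) > \epsilon - (\kappaeg + e_2)\Delta^{(i)}$. Non-termination at iteration $i$ means exactly $\Delta^{(i)} > \mu \overline{\eta}_1^{(i)}(\bx_k)$, i.e.\ $\overline{\eta}_1^{(i)}(\bx_k) < \Delta^{(i)}/\mu$; chaining the two inequalities and rearranging yields the clean estimate $\Delta^{(i)} > \epsilon / (\kappaeg + 1/\mu + e_2)$.

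From here both remaining claims drop out. Since $\Delta^{(i)} = \omega_C^{i-1}\Delta_k^{\textnormal{init}} \to 0$ as $i \to \infty$ while every non-terminating iteration must satisfy the strict lower bound above, the loop cannot continue indefinitely, giving finite termination. For the lower bound on the returned $\Delta_k$, I would split on the terminating index $i$: if $i = 1$ then $\Delta_k = \Delta_k^{\textnormal{init}}$, which dominates the minimum in \eqref{eq_delta_bound}; if $i > 1$, then iteration $i-1$ did not terminate, so $\Delta^{(i-1)} > \epsilon / (\kappaeg + 1/\mu + e_2)$, and using $\Delta_k = \Delta^{(i)} = \omega_C \Delta^{(i-1)}$ gives $\Delta_k > \omega_C \epsilon / (\kappaeg + 1/\mu + e_2)$. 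Either way $\Delta_k \geq \min(\Delta_k^{\textnormal{init}}, \omega_C \epsilon / (\kappaeg + 1/\mu + e_2))$, which is \eqref{eq_delta_bound}.

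I expect the only delicate point to be the bookkeeping around the inexact criticality estimate: one must apply full linearity at the correct radius $\Delta^{(i)}$ in \emph{every} inner iteration (not merely at the output radius), and keep the direction of the approximation inequality $\overline{\eta}_1^{(i)} \leq \eta_1^{(i)}$ consistent with \eqref{eq_alggeo_eta_error}, so that the three error contributions $\kappaeg$, $e_2$ and $1/\mu$ accumulate additively into the denominator. No further obstacle is anticipated, since the argument is otherwise a standard criticality-phase contradiction adapted from the smooth case.
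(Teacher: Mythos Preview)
Your proposal is correct and follows essentially the same approach as the paper's proof: both establish the same per-iteration inequality $\Delta^{(i)} > \epsilon/(\kappaeg + 1/\mu + e_2)$ for non-terminating inner iterations by combining full linearity (via \lemref{lem_crit_perturb}), the inexactness bound \eqref{eq_alggeo_eta_error}, and the failed termination test, then deduce finite termination and the lower bound on $\Delta_k$ by case-splitting on whether the first inner iteration terminates. Your write-up is in fact slightly more explicit than the paper's about how the $\omega_C$ factor enters the final bound (by applying the non-termination estimate at index $i-1$ and multiplying by $\omega_C$), but the underlying argument is identical.
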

\begin{proof}
This proof is similar to \cite[Lemma B.1]{cartis2019derivative} except that we allow the inaccurate estimate of our criticality measure $\eta_1$. First, suppose \algref{alg_geo} terminates on the first iteration. Then $\Delta_k=\Delta_k^{\textnormal{init}}$, and the result holds. Otherwise, consider some iterations $i$ where \algref{alg_geo} does not terminate, that is, where $\omega_C^{i-1}\Delta_k^{\textnormal{init}} > \mu \overline{\eta}_1^{(i)}(\bx_k)$. Then since $m_k^{(i)}$ is fully linear in $B(\bx_k, \omega_C^{i-1}\Delta_k^{\textnormal{init}})$, from \lemref{lem_crit_perturb} and \eqref{eq_alggeo_eta_error}, we have
\begin{align}
    \epsilon \leq \Psi_1(\bx_k) &= \Psi_1(\bx_k)-\eta_1^{(i)}(\bx_k)+\eta_1^{(i)}(\bx_k) \\
    &\leq
    \kappaeg\omega_C^{i-1}\Delta_k^{\textnormal{init}}+\overline{\eta}_1^{(i)}(\bx_k)+e_2\omega_C^{i-1}\Delta_k^{\textnormal{init}} \\
    &\leq \left(\kappaeg+\frac{1}{\mu}+e_2\right)\omega_C^{i-1}\Delta_k^{\textnormal{init}},
\end{align}
or equivalently $\omega_C^{i-1} \geq \dfrac{\epsilon}{(\kappaeg+1/\mu+e_2)\Delta_k^{\textnormal{init}}}$. That is, if termination does not occur on iteration $i$, we must have
\begin{align}
    i \leq 1+\frac{1}{|\log\omega_C|}\log\left(\frac{(\kappaeg+1/\mu+e_2)\Delta_k^{\textnormal{init}}}{\epsilon}\right),
\end{align}
so \algref{alg_geo} terminates finitely. We also have $\omega_C^{i-1}\Delta_k^{\textnormal{init}} \geq \dfrac{\epsilon}{\kappaeg+1/\mu+e_2}$, which gives \eqref{eq_delta_bound}.
\end{proof}

Next, we show that having a fully linear model $p_k$ for $f$ guarantees a condition similar to \eqref{eq_fl_f} for our full (nonsmooth) model $m_k$ \eqref{eq_model}.

\begin{lemma} \label{lem_model_obj_diff}
Suppose Assumptions~\ref{ass_smooth}, \ref{ass_regu} and \ref{ass_boundedhess} hold. 
If $p_k$ is a fully linear model of $f$ on the ball $B(\bx_k,\Delta_k)$, then
\begin{align}
    \abs{\Phi(\bx_k+\bs_k)-m_k(\bx_k+\bs_k)} \leq \kappaef\Delta_k^{2}.
\end{align}
\end{lemma}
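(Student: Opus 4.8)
The plan is to exploit the fact that the nonsmooth regularizer $h$ enters the true objective $\Phi$ and the model $m_k$ in exactly the same way, so that it cancels. First I would expand both quantities using $\Phi(\bx_k + \bs_k) = f(\bx_k + \bs_k) + h(\bx_k + \bs_k)$ together with the model definition $m_k(\bx_k + \bs_k) = p_k(\bx_k + \bs_k) + h(\bx_k + \bs_k)$ from \eqref{eq_model}. Subtracting these, the two $h(\bx_k + \bs_k)$ terms cancel identically, leaving
\begin{align*}
\Phi(\bx_k + \bs_k) - m_k(\bx_k + \bs_k) = f(\bx_k + \bs_k) - p_k(\bx_k + \bs_k).
\end{align*}

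Second, I would observe that the trust-region step satisfies $\norm{\bs_k} \leq \Delta_k$ (from line~\ref{algline_trust_region}), so the trial point $\bx_k + \bs_k$ lies in the ball $B(\bx_k, \Delta_k)$ on which $p_k$ is assumed to be fully linear. Applying the zeroth-order full-linearity bound \eqref{eq_fl_f} with $\by = \bx_k + \bs_k$ then yields
\begin{align*}
\abs{f(\bx_k + \bs_k) - p_k(\bx_k + \bs_k)} \leq \kappaef \Delta_k^2,
\end{align*}
which combined with the displayed identity gives exactly the claimed bound.

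I do not expect any real obstacle here: the entire content is the exact cancellation of $h$, which relies only on the model $m_k$ being built by adding the \emph{same} known $h$ to the smooth model $p_k$ (rather than a separate model of $h$). This is precisely the structural advantage of the setting \eqref{eq_prob}, as emphasised after \eqref{eq_model}, where $h$ is known and cheap to evaluate. Consequently, the only ingredients genuinely used are the full linearity of $p_k$ and the containment $\bx_k + \bs_k \in B(\bx_k, \Delta_k)$; Assumptions~\ref{ass_regu} and~\ref{ass_boundedhess} are stated for uniformity with the surrounding analysis but are not invoked in this particular estimate.
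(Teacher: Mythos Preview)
Your argument is essentially the paper's own proof: cancel the identical $h(\bx_k+\bs_k)$ term and apply \eqref{eq_fl_f} at $\by=\bx_k+\bs_k\in B(\bx_k,\Delta_k)$. The only addition the paper makes is to note explicitly that the cancellation is well-defined (i.e.\ not $\infty-\infty$), since the decrease requirement \eqref{eq_mk_dec} forces $m_k(\bx_k+\bs_k)<\infty$ and hence $\bx_k+\bs_k\in\operatorname{dom} h$; you may wish to include this one-line justification rather than asserting that \assref{ass_regu} plays no role.
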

\begin{proof}
From \eqref{eq_mk_dec}, $\bs_k$ must be calculated such that $\bx_k+\bs_k\in\operatorname{dom} h$.
Since $\bx_0\in\operatorname{dom} h$ by definition, we must have $\bx_k\in\operatorname{dom} h$ for all $k$.
Hence by definition of $\Phi$ and $m_k$ we have
\begin{align}
	\Phi(\bx_k+\bs_k)-m_k(\bx_k+\bs_k) &= f(\bx_k+\bs_k)-p_k(\bx_k+\bs_k)
\end{align}
and the result follows from \eqref{eq_fl_f}.
\end{proof}

In \algref{alg_dfo}, if the safety phase is not called, we say an iteration is `successful' if $R_k \geq \beta_1$ and `very successful' if $R_k \geq \beta_2$. 

\begin{lemma} \label{lem_suc_safety}
Suppose Assumptions~\ref{ass_smooth}, \ref{ass_regu} and \ref{ass_boundedhess} hold. 
If $p_k$ is a fully linear model of $f$ on the ball $B(\bx_k,\Delta_k)$, and 
\begin{align} \label{eq_suc_safety}
    \Delta_k \leq \min\left\{\frac{1}{\kappa_H},\frac{e_3c_1(1-\beta_2)}{\kappaef}\right\}\overline{\eta}_1(\bx_k),
\end{align}
then, the $k$-th iteration is either very successful or the safety phase is called.
\end{lemma}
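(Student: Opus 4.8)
The plan is to establish the stated dichotomy by assuming the safety phase is \emph{not} triggered at iteration $k$ (so that the ratio test in \eqref{eq_ratio} is actually reached), and then showing that under the stated bound on $\Delta_k$ we must have $R_k \geq \beta_2$, i.e.\ the iteration is very successful. This is the standard trust-region mechanism by which a sufficiently small trust-region radius, relative to the criticality measure, forces agreement between model and objective and hence acceptance of the step.

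First I would record the exactness of the model at the centre: since $\bx_k \in Y_k$, the interpolation condition \eqref{eq_interpolation} gives $p_k(\bx_k) = f(\bx_k)$, so by \eqref{eq_model} we have $m_k(\bx_k) = \Phi(\bx_k)$. Writing out $R_k - 1$ as the gap between actual and predicted reductions divided by the predicted reduction, this identity makes the numerator collapse to $\Phi(\bx_k+\bs_k) - m_k(\bx_k+\bs_k)$, which \lemref{lem_model_obj_diff} bounds in absolute value by $\kappaef \Delta_k^2$.

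Next I would lower-bound the predicted reduction $m_k(\bx_k) - m_k(\bx_k + \bs_k)$. \assref{ass_boundedhess} gives $\max\bigl\{1, \norm{H_k}\bigr\} \leq \kappa_H$, and the first part of the hypothesis, $\Delta_k \leq \overline{\eta}_1(\bx_k)/\kappa_H$, ensures that the minimum in the Cauchy-type condition \eqref{eq_mk_dec} is attained by $\Delta_k$; hence $m_k(\bx_k) - m_k(\bx_k+\bs_k) \geq e_3 c_1 \overline{\eta}_1(\bx_k) \Delta_k$. Dividing the numerator bound by this denominator yields $\abs{R_k - 1} \leq \kappaef \Delta_k / (e_3 c_1 \overline{\eta}_1(\bx_k))$, and the second part of the hypothesis, $\Delta_k \leq e_3 c_1 (1-\beta_2)\overline{\eta}_1(\bx_k)/\kappaef$, reduces this to $\abs{R_k-1} \leq 1 - \beta_2$, whence $R_k \geq \beta_2$.

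The argument is essentially routine; the only thing to be careful about is the logical framing around the safety phase. Since $R_k$ is only defined once the safety-phase entry test $\norm{\bs_k} < \tau_k \gamma_S \rho_k$ on line~\ref{algline_safety} fails, the conclusion must be phrased as a dichotomy, with the ``very successful'' branch being exactly what the ratio estimate delivers. A minor point worth verifying is that \eqref{eq_mk_dec} forces $\bx_k + \bs_k \in \operatorname{dom} h$, so that $\Phi(\bx_k+\bs_k)$ is finite and \lemref{lem_model_obj_diff} applies, but this is already handled within the proof of \lemref{lem_model_obj_diff}. I do not expect any genuine obstacle beyond correctly deploying the two halves of the hypothesis in their respective roles (collapsing the inner minimum, then bounding $\abs{R_k-1}$).
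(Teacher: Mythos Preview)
Your proposal is correct and follows essentially the same route as the paper's own proof: use \eqref{eq_mk_dec} together with $\Delta_k \leq \overline{\eta}_1(\bx_k)/\kappa_H$ to lower-bound the predicted reduction by $e_3 c_1 \overline{\eta}_1(\bx_k)\Delta_k$, invoke \lemref{lem_model_obj_diff} for the numerator, and conclude $\abs{1-R_k}\leq 1-\beta_2$ via the second half of the hypothesis. Your explicit framing of the safety-phase dichotomy and the observation that $m_k(\bx_k)=\Phi(\bx_k)$ are exactly what the paper does (the latter implicitly), so there is no substantive difference.
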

\begin{proof}
This proof is based on \cite[Lemma 5.3]{conn2009global}.
Since $\norm{H_k}\leq\kappa_H$ and $\Delta_k\leq\dfrac{\overline{\eta}_1(\bx_k)}{\kappa_H}$, it follows from \lemref{lem_mk_dec} that
\begin{align*}
    m_k(\bx_k)-m_k(\bx_k+\bs_k) &\geq e_3c_1\overline{\eta}_1(\bx_k)\min\left\{\Delta_k,\frac{\overline{\eta}_1(\bx_k)}{\max\left\{1,\norm{H_k}\right\}}\right\} \\
    &\geq e_3c_1\overline{\eta}_1(\bx_k)\min\left\{\Delta_k,\frac{\overline{\eta}_1(\bx_k)}{\kappa_H}\right\} \\
    &= e_3c_1\overline{\eta}_1(\bx_k)\Delta_k.
\end{align*}
Then, applying \lemref{lem_model_obj_diff}, we obtain
\begin{align*}
    \abs{1-R_k} &= \abs{\frac{(m_k(\bx_k)-m_k(\bx_k+\bs_k))-(\Phi(\bx_k)-\Phi(\bx_k+\bs_k))}{m_k(\bx_k)-m_k(\bx_k+\bs_k)}} \\
    &= \frac{\abs{\Phi(\bx_k+\bs_k)-m_k(\bx_k+\bs_k)}}{\abs{m_k(\bx_k)-m_k(\bx_k+\bs_k)}} \\
    &\leq \frac{\kappaef\Delta_k}{e_3c_1\overline{\eta}_1(\bx_k)} \\
    &\leq 1-\beta_2,
\end{align*}
where we use the fact that $\Delta_k \leq \dfrac{e_3c_1(1-\beta_2) \overline{\eta}_1(\bx_k)}{\kappaef}$. Hence, $R_k \geq \beta_2$, and by \algref{alg_dfo}, the safety phase is called if $\norm{\bs_k} \leq \tau_k\gamma_S \rho_k$, otherwise the iteration $k$ is very successful.
\end{proof} 

The next lemma provides a lower bound for the trust region step size $\norm{\bs_k}$, which will be used later to determine that the safety phase is not called under appropriate assumptions.
It is based on \cite[Lemma 3.6]{cartis2019derivative} but with more work required to handle the non-standard assumption on the trust-region subproblem decrease \eqref{eq_mk_dec}.

\begin{lemma} \label{lem_step_bound}
Suppose Assumptions~\ref{ass_smooth} and \ref{ass_regu} hold. 
Then the step $\bs_k$ satisfies 
\begin{align} \label{eq_step_bound}
    \norm{\bs_k} \geq c_2\tau_k\min\left\{\Delta_k,\frac{\overline{\eta}_1(\bx_k)}{\max\left\{1,\norm{H_k}\right\}}\right\},
\end{align}
where $c_2 \coloneqq \dfrac{2 e_3 c_1}{1+\sqrt{1+2e_3 c_1}}<1$ and $\tau_k \coloneqq \min\left\{\dfrac{\overline{\eta}_1(\bx_k)}{\norm{\bg_k} + L_h},1\right\}$.
\end{lemma}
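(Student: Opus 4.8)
The plan is to lower-bound $\norm{\bs_k}$ by exploiting the sufficient decrease condition \eqref{eq_mk_dec}, which the step $\bs_k$ is guaranteed to satisfy, together with an upper bound on the model decrease expressed purely in terms of $\norm{\bs_k}$. The key observation is that the model $m_k(\bx_k+\bs) = p_k(\bx_k+\bs) + h(\bx_k+\bs)$ decreases by an amount I can control from above using the structure of $p_k$ in \eqref{eq_pk} and the Lipschitz continuity of $h$ from \assref{ass_regu}. Specifically, I expect
\begin{align*}
  m_k(\bx_k) - m_k(\bx_k+\bs_k) &= -\bg_k^T\bs_k - \tfrac{1}{2}\bs_k^T H_k \bs_k + h(\bx_k) - h(\bx_k+\bs_k) \\
  &\leq (\norm{\bg_k} + L_h)\norm{\bs_k} + \tfrac{1}{2}\norm{H_k}\norm{\bs_k}^2,
\end{align*}
where I use Cauchy--Schwarz on the linear term, the $L_h$-Lipschitz bound on the $h$-difference, and the operator-norm bound on the quadratic term.

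Next I would chain this upper bound against the lower bound \eqref{eq_mk_dec} to obtain a quadratic inequality in $\norm{\bs_k}$. Writing $\sigma \coloneqq \norm{\bs_k}$ and abbreviating the sufficient-decrease right-hand side as $D_k \coloneqq e_3 c_1 \overline{\eta}_1(\bx_k)\min\{\Delta_k, \overline{\eta}_1(\bx_k)/\max\{1,\norm{H_k}\}\}$, the two bounds give
\begin{align*}
  D_k \leq (\norm{\bg_k}+L_h)\sigma + \tfrac{1}{2}\norm{H_k}\sigma^2.
\end{align*}
The role of $\tau_k = \min\{\overline{\eta}_1(\bx_k)/(\norm{\bg_k}+L_h),\,1\}$ becomes clear here: dividing by $(\norm{\bg_k}+L_h)$ and using $\overline{\eta}_1(\bx_k) \leq \tau_k(\norm{\bg_k}+L_h)$ lets me convert the linear coefficient into a clean multiple of $D_k/\overline{\eta}_1(\bx_k)$, so the minimum term $\min\{\Delta_k, \overline{\eta}_1(\bx_k)/\max\{1,\norm{H_k}\}\}$ survives and the $\tau_k$ factor appears naturally on the right-hand side of \eqref{eq_step_bound}. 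I would solve the resulting quadratic inequality for $\sigma$ via the quadratic formula, which produces the denominator $1+\sqrt{1+2e_3c_1}$ appearing in $c_2$.

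The main obstacle will be handling the quadratic term $\tfrac{1}{2}\norm{H_k}\sigma^2$ cleanly so that the bound factors through $\min\{1,\norm{H_k}\}$ rather than $\norm{H_k}$ alone, and verifying that the minimum in \eqref{eq_step_bound} is respected in both regimes (when $\Delta_k$ achieves the minimum versus when $\overline{\eta}_1(\bx_k)/\max\{1,\norm{H_k}\}$ does). I expect to split into the cases $\norm{H_k}\leq 1$ and $\norm{H_k}>1$, bounding $\max\{1,\norm{H_k}\}$ appropriately in each, and to verify that the constant $c_2 = 2e_3c_1/(1+\sqrt{1+2e_3c_1})$ indeed satisfies $c_2 < 1$ (which follows since $2e_3c_1 < 1 + \sqrt{1+2e_3c_1}$ for $e_3c_1 > 0$). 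Care is also needed to confirm the inequality $\overline{\eta}_1(\bx_k) \leq \tau_k(\norm{\bg_k}+L_h)$ holds by the very definition of $\tau_k$, which is the linchpin that ties the safety-phase variable to the step-size lower bound.
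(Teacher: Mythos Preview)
Your overall strategy---upper-bound the model decrease via Cauchy--Schwarz, the operator-norm bound, and the $L_h$-Lipschitz property of $h$, chain this with the sufficient decrease condition \eqref{eq_mk_dec} to obtain a quadratic inequality in $\norm{\bs_k}$, and then solve---is exactly the paper's approach. However, the inequality you call the ``linchpin'' is stated in the wrong direction. From $\tau_k = \min\bigl\{\overline{\eta}_1(\bx_k)/(\norm{\bg_k}+L_h),\,1\bigr\}$ you always have $\tau_k \leq \overline{\eta}_1(\bx_k)/(\norm{\bg_k}+L_h)$, i.e.~$\tau_k(\norm{\bg_k}+L_h) \leq \overline{\eta}_1(\bx_k)$; the reverse inequality you wrote fails precisely when $\tau_k=1$ (that is, when $\overline{\eta}_1(\bx_k) > \norm{\bg_k}+L_h$). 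Fortunately the correct direction is still usable: it lets you replace the linear coefficient $(\norm{\bg_k}+L_h)$ by the \emph{larger} quantity $\overline{\eta}_1(\bx_k)/\tau_k$ in your upper bound on $m_k(\bx_k)-m_k(\bx_k+\bs_k)$, after which the quadratic formula (together with $\min\{\Delta_k,\overline{\eta}_1(\bx_k)/h_k\} \leq \overline{\eta}_1(\bx_k)/h_k$) yields the factor $c_2\tau_k$ directly.

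The paper organizes the computation slightly differently: it sets $h_k \coloneqq \max\{1,\norm{H_k}\}$ at the outset and uses $\norm{H_k}\leq h_k$ in the quadratic term, divides the whole quadratic inequality by $h_k$, solves it via the quadratic formula in terms of $C_k^g \coloneqq (\norm{\bg_k}+L_h)/h_k$ and $C_k^{\eta} \coloneqq \overline{\eta}_1(\bx_k)/h_k$, and only then splits on $\tau_k=1$ versus $\tau_k<1$ (equivalently $C_k^{\eta}\geq C_k^g$ versus $C_k^{\eta}<C_k^g$) to simplify the resulting bound. This up-front introduction of $h_k$ makes the separate case analysis on $\norm{H_k}\lessgtr 1$ that you anticipated unnecessary.
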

\begin{proof}
Let $h_k \coloneqq \max\{\norm{H_k},1\}\geq1$. Since $m_k(\bx_k)-m_k(\bx_k + \bs_k)\geq0$ by \algref{alg_dfo}, it follows that
\begin{align}
    m_k(\bx_k)-m_k(\bx_k + \bs_k) &= \abs{m_k(\bx_k)-m_k(\bx_k+\bs_k)}\\
    &= \abs{\bg_k^{T}\bs_{k}+\frac{1}{2}\bs_{k}^{T}H_k\bs_{k}-h(\bx_k)+h(\bx_k+\bs_k)} \\
    &\leq \abs{\bg_k^{T}\bs_{k}+\frac{1}{2}\bs_{k}^{T}H_k\bs_{k}}+\abs{h(\bx_k)-h(\bx_k+\bs_k)} \\
    &\leq \norm{\bg_k}\cdot\norm{\bs_k}+\frac{h_k}{2}\norm{\bs_k}^{2}+L_h\norm{\bs_k}.
\end{align}
Substituting this into \eqref{eq_mk_dec}, we have
\begin{align} \label{eq_quadratic_temp1}
    \frac{1}{2}\norm{\bs_k}^{2}+\frac{\norm{\bg_k}+L_h}{h_k}\norm{\bs_k}-e_3c_1\frac{\overline{\eta}_1(\bx_k)}{h_k}\min\left\{\Delta_k,\frac{\overline{\eta}_1(\bx_k)}{h_k}\right\}\geq0.
\end{align}
Define $C_k^{g} \coloneqq (\norm{\bg_k} + L_h) / h_k$ and $C_k^{\eta} \coloneqq \overline{\eta}_1(\bx_k) / h_k$. For \eqref{eq_quadratic_temp1} to be satisfied, we require that $\norm{\bs_k}$ is no less than the positive root of the left-hand side of \eqref{eq_quadratic_temp1}, which gives the inequality below
\begin{align} \label{eq_quadratic_temp2}
    \norm{\bs_k} &\geq \frac{2e_3c_1C_k^{\eta}}{C_k^{g}+\sqrt{(C_k^{g})^2+2e_3c_1(C_k^{\eta})^2}} \min\left\{\Delta_k,C_k^{\eta}\right\}
\end{align}
Note that $\tau_k = \min\{C_k^{\eta} / C_k^{g}, 1\}$. If $\tau_k \geq 1$, i.e., $C_k^{\eta} \geq C_k^{g}$, from \eqref{eq_quadratic_temp2},
\begin{align*} 
    \norm{\bs_k} &\geq \frac{2e_3c_1C_k^{\eta}}{C_k^{\eta}+\sqrt{(C_k^{\eta})^2+2e_3c_1 (C_k^{\eta})^2}} \min\left\{\Delta_k,C_{k}^{\eta}\right\} \\
    &= \frac{2e_3 c_1}{1 + \sqrt{1+2e_3c_1}} \min\left\{\Delta_k, C_k^{\eta}\right\}
    \stepcounter{equation}\tag{\theequation}\label{eq_quadratic_temp3}
\end{align*}
On the other hand, if $\tau_k < 1$, that is, $C_k^{\eta} < C_k^{g}$, it follows from \eqref{eq_quadratic_temp2} that 
\begin{align*} 
    \norm{\bs_k} &\geq \frac{2e_3c_1 C_k^{g} \left(C_k^{\eta} / C_k^{g}\right)}{C_k^{g} + \sqrt{(C_k^{g})^2 + 2e_3c_1(C_k^{g})^2}} \min\left\{\Delta_k, C_k^{\eta}\right\} \\
    &= \frac{2e_3 c_1 \left(C_k^{\eta} / C_k^{g}\right)}{1 + \sqrt{1+2e_3c_1}} \min\left\{\Delta_k, C_k^{\eta}\right\}.
    \stepcounter{equation}\tag{\theequation}\label{eq_quadratic_temp4}
\end{align*}
Combining \eqref{eq_quadratic_temp3} and \eqref{eq_quadratic_temp4}, 
\begin{align}
    \norm{\bs_k} &\geq \frac{2e_3 c_1 \tau_k}{1 + \sqrt{1+2e_3c_1}}\min\left\{\Delta_k, C_k^{\eta}\right\},
\end{align}
Then we can recover \eqref{eq_step_bound} from the definition of $h_k$, $C_k^{\eta}$ and $c_2$.
We conclude by noting $c_2<1$ since $e_3<1$ and $c_1<1/2$.
\end{proof}

\begin{lemma} \label{lem_eta_bound}
In all iterations, $\eta_1(\bx_k) \geq \overline{\eta}_1(\bx_k) \geq \min\left\{e_1\epsilon_C, \dfrac{\Delta_k}{\mu}\right\}$. Also, if $\Psi_1(\bx_k)\geq\epsilon>0$, then 
\begin{align} \label{eq_eta_bound}
    \eta_1(\bx_k) \geq \overline{\eta}_1(\bx_k)\geq\epsilon_g\coloneqq\min\left\{e_1\epsilon_C, \frac{\epsilon}{1+(\kappaeg+e_2)\mu}\right\}>0.
\end{align}
\end{lemma}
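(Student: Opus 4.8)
The plan is to prove both assertions by a case analysis on whether the criticality phase (line~6 of \algref{alg_dfo}, realized through \algref{alg_geo}) is entered at iteration $k$. The leftmost inequality $\eta_1(\bx_k) \geq \overline{\eta}_1(\bx_k)$ holds in every iteration regardless of the case: because $\bd_k$ (respectively $\Tilde{\bd}_k$) is a \emph{feasible} but only approximate minimizer of $\Tilde{l}(\bx_k,\cdot)$ over $B(\b0,1)$, we have $\Tilde{l}(\bx_k,\bd_k) \geq \min_{\norm{\bd}\leq 1}\Tilde{l}(\bx_k,\bd)$, and subtracting from $\Tilde{l}(\bx_k,\b0)$ gives $\overline{\eta}_1(\bx_k) \leq \eta_1(\bx_k)$ by \eqref{eq_cri_eta}. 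Thus it remains only to bound $\overline{\eta}_1(\bx_k)$ from below.

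For the first claim I distinguish the two cases. If the criticality phase is \emph{not} entered, the entry test in \algref{alg_dfo} fails, so the Else branch sets $\overline{\eta}_1(\bx_k) = \overline{\eta}_1^{\textnormal{init}}(\bx_k) > e_1\epsilon_C \geq \min\{e_1\epsilon_C, \Delta_k/\mu\}$. If the criticality phase \emph{is} entered, then \algref{alg_geo} returns $\Delta_k$ and $\overline{\eta}_1(\bx_k)$ with $\Delta_k \leq \mu\,\overline{\eta}_1(\bx_k)$, i.e.\ $\overline{\eta}_1(\bx_k) \geq \Delta_k/\mu \geq \min\{e_1\epsilon_C, \Delta_k/\mu\}$. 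This settles the first bound in both cases.

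For the second claim, assume $\Psi_1(\bx_k) \geq \epsilon > 0$ and split on the criticality phase once more. If it is not entered, the same reasoning gives $\overline{\eta}_1(\bx_k) > e_1\epsilon_C \geq \epsilon_g$ directly. If it is entered, then on return $Y_k$ is $\Lambda$-poised in $B(\bx_k,\Delta_k)$, so by \lemref{lem_kappa_generic} the model $p_k$ is fully linear there, and \lemref{lem_crit_perturb} yields $\Psi_1(\bx_k) \leq \eta_1(\bx_k) + \kappaeg\Delta_k$. Combining this with the two accuracy guarantees delivered by the criticality phase, $\eta_1(\bx_k) - \overline{\eta}_1(\bx_k) < e_2\Delta_k$ and $\Delta_k \leq \mu\,\overline{\eta}_1(\bx_k)$, produces the chain
\begin{align*}
    \epsilon \leq \Psi_1(\bx_k) \leq \eta_1(\bx_k) + \kappaeg\Delta_k < \overline{\eta}_1(\bx_k) + (\kappaeg + e_2)\Delta_k \leq \bigl(1 + (\kappaeg + e_2)\mu\bigr)\overline{\eta}_1(\bx_k),
\end{align*}
which rearranges to $\overline{\eta}_1(\bx_k) > \epsilon/(1 + (\kappaeg + e_2)\mu) \geq \epsilon_g$, and $\epsilon_g > 0$ since $e_1, \epsilon_C, \epsilon > 0$.

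This is essentially a bookkeeping argument assembled from conditions already hard-wired into the algorithm, so I do not expect a genuine obstacle. The one point requiring care is that \lemref{lem_crit_perturb} is only available once $p_k$ is known to be fully linear on $B(\bx_k,\Delta_k)$; this is precisely why I must invoke it only in the entered-criticality case, where \algref{alg_geo} guarantees $\Lambda$-poisedness of $Y_k$ in $B(\bx_k,\Delta_k)$, and why the non-entered case must instead be closed off using the entry-test bound $\overline{\eta}_1(\bx_k) > e_1\epsilon_C$ rather than the perturbation estimate.
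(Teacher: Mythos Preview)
Your proof is correct and follows essentially the same approach as the paper: the same case split on whether the criticality phase is entered, the same use of the entry-test bound $\overline{\eta}_1^{\textnormal{init}}(\bx_k)>e_1\epsilon_C$ in one case and the guarantees $\Delta_k\leq\mu\overline{\eta}_1(\bx_k)$, $\eta_1(\bx_k)-\overline{\eta}_1(\bx_k)<e_2\Delta_k$ together with \lemref{lem_crit_perturb} in the other. Your explicit justification of full linearity via \lemref{lem_kappa_generic} is a welcome clarification that the paper leaves implicit.
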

\begin{proof}
Firstly, since $\eta_1(\bx_k) \geq \overline{\eta}_1(\bx_k)$, if the criticality phase is not called, then we must have $\eta_1(\bx_k) \geq \overline{\eta}_1(\bx_k)=\overline{\eta}_1^{\textnormal{init}}(\bx_k)>e_1\epsilon_C$. Otherwise, we have $\Delta_k \leq \mu\overline{\eta}_1(\bx_k) \leq \mu\eta_1(\bx_k)$. Hence, $\eta_1(\bx_k) \geq \overline{\eta}_1(\bx_k) \geq \min\left\{e_1\epsilon_C, \dfrac{\Delta_k}{\mu}\right\}$. 

The proof of \eqref{eq_eta_bound} follows from \cite[Lemma 10.11]{conn2009introduction} except that we must take into account the inaccuracy when computing $\overline{\eta}_1(\bx_k)$. We first suppose that the criticality phase is not called. Then  $\overline{\eta}_1(\boldsymbol{x}_k)=\overline{\eta}_1^{\textnormal{init}}(\boldsymbol{x}_k)>e_1\epsilon_C$ and hence \eqref{eq_eta_bound} holds. Otherwise, the criticality phase is called and $m_k$ is fully linear in $B(\bx_k,\Delta_k)$ with $\Delta_k \leq \mu\overline{\eta}_1(\bx_k)$. In this case, applying \lemref{lem_crit_perturb}, we obtain
\begin{align}
    \Psi_1(\bx_k) &=\Psi_1(\bx_k)-\eta_1(\bx_k)+\eta_1(\bx_k) \\
    &\leq \kappaeg\Delta_k+\overline{\eta}_1(\bx_k)+e_2\Delta_k \\
    &\leq (\kappaeg+e_2)\mu\overline{\eta}_1(\bx_k)+\overline{\eta}_1(\bx_k).
\end{align}
Since $\Psi_1(\bx_k) \geq \epsilon$, then $\overline{\eta}_1(\bx_k) \geq \dfrac{\epsilon}{1+(\kappaeg+e_2)\mu}$ and \eqref{eq_eta_bound} holds.
\end{proof}

\begin{lemma} \label{lem_rho_min}
Suppose that \assref{ass_smooth}, \assref{ass_regu} and \assref{ass_boundedhess} hold. If $\Psi_1(\bx_k) \geq \epsilon>0$ for all $k$, then $\rho_k\geq\rho_{\min}>0$ for all $k$, where
{\small
\begin{align} \label{eq_rho_min}
    \rho_{\min} \coloneqq \min\left\{\Delta_0^{\textnormal{init}},\frac{\omega_C\epsilon}{\kappaeg+1/\mu+e_2},\frac{\modification{c_2 \gammadec}\alpha_1\epsilon_g}{\modification{2}\kappa_H},\frac{\modification{c_2 \gammadec}\alpha_1}{\modification{2}}\left(\kappaeg+\frac{\kappaef}{e_3c_1(1-\beta_2)}+e_2\right)^{-1}\epsilon\right\}.
\end{align}
}
\end{lemma}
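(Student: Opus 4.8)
The plan is to argue by contradiction using the fact that the sequence of lower bounds $\rho_k$ is non-increasing and can only be reduced through a few explicit mechanisms. Since $\rho_0 = \rho_0^{\textnormal{init}} = \Delta_0^{\textnormal{init}} \ge \rho_{\min}$ by the first term of \eqref{eq_rho_min}, if the claim failed there would be a first index $k$ at which $\rho$ is driven strictly below $\rho_{\min}$. Inspecting \algref{alg_dfo} and \algref{alg_geo}, $\rho$ decreases only when (i) the criticality phase sets $\rho_k = \min(\rho_k^{\textnormal{init}}, \Delta_k)$ with $\Delta_k < \rho_k^{\textnormal{init}}$, (ii) the safety phase is entered with $\Delta_{k+1}^{\textnormal{init}} = \rho_k$, giving $\rho_{k+1}^{\textnormal{init}} = \alpha_1 \rho_k$, or (iii) the unsuccessful phase occurs with $\Delta_{k+1}^{\textnormal{init}} = \rho_k$, again giving $\rho_{k+1}^{\textnormal{init}} = \alpha_1 \rho_k$. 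I would show that in each case the decreased value is still at least $\rho_{\min}$, contradicting minimality. Throughout I would use the invariant $\Delta_k \ge \rho_k$ (immediate from the update rules) and the bound $\overline{\eta}_1(\bx_k) \ge \epsilon_g$ from \lemref{lem_eta_bound}, which is available because $\Psi_1(\bx_k) \ge \epsilon$.

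Case (i) is immediate from \lemref{lem_delta_bound}, which bounds $\Delta_k$ below by $\min(\Delta_k^{\textnormal{init}}, \omega_C\epsilon/(\kappaeg + 1/\mu + e_2))$; combined with $\rho_k^{\textnormal{init}} \ge \rho_{\min}$ (minimality of $k$) and the invariant $\Delta_k^{\textnormal{init}} \ge \rho_k^{\textnormal{init}}$ this keeps $\rho_k$ above the second term of \eqref{eq_rho_min}. For case (ii), the safety phase is entered only if $\norm{\bs_k} < \tau_k\gamma_S\rho_k$; substituting the step lower bound \eqref{eq_step_bound} of \lemref{lem_step_bound} and cancelling the common positive factor $\tau_k$ gives $c_2\min\{\Delta_k, \overline{\eta}_1(\bx_k)/\max\{1,\norm{H_k}\}\} < \gamma_S\rho_k$. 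Because $\gamma_S < c_2$ by the parameter assumptions and $\Delta_k \ge \rho_k$, the minimum must be attained by its second argument, so $\rho_k > \overline{\eta}_1(\bx_k)/\kappa_H \ge \epsilon_g/\kappa_H$; after multiplying by $\alpha_1$ this stays comfortably above the third term of \eqref{eq_rho_min}.

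Case (iii) is the delicate one and carries the bulk of the work. Here $R_k < \beta_2$ so the iteration is not very successful, $Y_k$ is $\Lambda$-poised so $p_k$ is fully linear by \lemref{lem_kappa_generic}, and the safety phase was not triggered; the contrapositive of \lemref{lem_suc_safety} therefore forces $\Delta_k > \min\{1/\kappa_H,\, e_3c_1(1-\beta_2)/\kappaef\}\,\overline{\eta}_1(\bx_k)$. At the same time $\Delta_{k+1}^{\textnormal{init}} = \rho_k$ means $\min(\gammadec\Delta_k, \norm{\bs_k}) \le \tau_k\rho_k$. I would lower-bound $\min(\gammadec\Delta_k, \norm{\bs_k})$ by inserting the step bound \eqref{eq_step_bound} and using $\gammadec, c_2 < 1$ and $\tau_k \le 1$ to factor out $\tau_k$, then cancel it against the right-hand side, leaving a lower bound on $\rho_k$ proportional to $c_2\gammadec\min\{\Delta_k, \overline{\eta}_1(\bx_k)/\max\{1,\norm{H_k}\}\}$. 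Finally I would substitute the two lower bounds according to which argument attains the minimum in the \lemref{lem_suc_safety} threshold: when $1/\kappa_H$ is active I use $\overline{\eta}_1(\bx_k) \ge \epsilon_g$ to obtain the third term of \eqref{eq_rho_min}, while when $e_3c_1(1-\beta_2)/\kappaef$ is active I instead combine $\abs{\Psi_1(\bx_k) - \eta_1(\bx_k)} \le \kappaeg\Delta_k$ from \lemref{lem_crit_perturb} with the accuracy condition $\eta_1(\bx_k) - \overline{\eta}_1(\bx_k) \le e_2\Delta_k$ and $\Psi_1(\bx_k) \ge \epsilon$ to bound $\Delta_k$ directly below by $\epsilon/(\kappaeg + \kappaef/(e_3c_1(1-\beta_2)) + e_2)$, which produces the fourth term. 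Taking the minimum of the four resulting bounds gives $\rho_{k+1}^{\textnormal{init}} \ge \rho_{\min}$, the required contradiction.

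The main obstacle is the bookkeeping around the auxiliary variable $\tau_k$, which is the genuinely new ingredient relative to the smooth DFO-GN analysis: both the safety entry test (line~\ref{algline_safety}) and the unsuccessful-phase radius update (line~\ref{algline_tau_delta2}) carry a factor $\tau_k$ that is matched by the $\tau_k$ appearing in the step lower bound \eqref{eq_step_bound}. The argument only succeeds because these factors cancel exactly, leaving $\tau_k$-free inequalities; verifying this cancellation, together with the invariant $\Delta_k \ge \rho_k$ and the strict inequality $\gamma_S < c_2$ underpinning case (ii), is the crux, while the remaining algebra (and the explicit constant $c_2\gammadec\alpha_1/2$ that emerges from the step, radius, and $\rho$ contractions together with a crude bound on the nested minimum) is routine.
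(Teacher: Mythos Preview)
Your proposal is correct and follows essentially the same contradiction-on-first-drop strategy as the paper, with the same key ingredients (\lemref{lem_delta_bound}, \lemref{lem_step_bound}, \lemref{lem_suc_safety}, \lemref{lem_eta_bound}, and the $\tau_k$ cancellation). The organization differs slightly: the paper first shows the drop must occur at $\rho_{k(0)}^{\textnormal{init}}$ (ruling out the criticality case via \lemref{lem_delta_bound}), then uses the resulting upper bound $\rho_{k(0)-1}\le \rho_{\min}/\alpha_1$ to argue that the safety case \emph{cannot} occur (contradiction with the safety entry test) and that in the unsuccessful case the hypothesis of \lemref{lem_suc_safety} is actually satisfied (contradiction with $R_k<\beta_1$). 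You instead show in each case that the post-update $\rho$ remains $\ge\rho_{\min}$, which is an equivalent but slightly more direct packaging.

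One small imprecision to fix in case~(iii): after you obtain $\rho_k \ge c_2\gammadec\min\{\Delta_k,\,\overline{\eta}_1(\bx_k)/\max\{1,\|H_k\|\}\}$, your sub-case split is on which term of the \lemref{lem_suc_safety} threshold is active, but what you actually need to bound is $\min\{\Delta_k,\,\overline{\eta}_1(\bx_k)/\max\{1,\|H_k\|\}\}$. In your sub-case~B you lower-bound $\Delta_k$, but if the second argument is the smaller one you still need $\overline{\eta}_1(\bx_k)/\max\{1,\|H_k\|\}\ge \epsilon_g/\kappa_H$, which gives the third term rather than the fourth. Since $\rho_{\min}$ is the minimum of all four terms this does not affect correctness, but the sentence ``which produces the fourth term'' should read ``which, together with $\overline{\eta}_1(\bx_k)/\kappa_H\ge\epsilon_g/\kappa_H$, produces the minimum of the third and fourth terms.''
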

\begin{proof}
This proof is similar to \cite[Lemma 3.8]{cartis2019derivative}. To find a contradiction, suppose that $k(0)$ is the first $k$ such that $\rho_k<\rho_{\min}$. That is, we have
\begin{align} \label{eq_rho_temp1}
    \rho_0^{\textnormal{init}} \geq \rho_0 \geq \rho_1^{\textnormal{init}} \geq \rho_1 \geq \dots \geq \rho_{k(0)-1}^{\textnormal{init}} \geq \rho_{k(0)-1} \geq \rho_{\min} \quad \text{and} \quad \rho_{k(0)}<\rho_{\min}.
\end{align}
We first claim that 
\begin{align} \label{eq_rho_temp2}
    \rho_{k(0)}=\rho_{k(0)}^{\textnormal{init}}<\rho_{\min}.
\end{align} 
To see this, note that we have either $\rho_{k(0)}=\rho_{k(0)}^{\textnormal{init}}$ or $\rho_{k(0)}=\Delta_{k(0)}$ from \algref{alg_dfo}. In the former case, \eqref{eq_rho_temp2} holds trivially. In the latter, using \lemref{lem_delta_bound} and \eqref{eq_rho_temp1}, we have that 
\begin{align}
    \rho_{\min}>\Delta_{k(0)} \geq \min\left(\Delta_{k(0)}^{\textnormal{init}},\frac{\omega_C\epsilon}{\kappaeg+1/\mu+e_2}\right)
    \geq \min\left(\rho_{k(0)}^{\textnormal{init}},\frac{\omega_C\epsilon}{\kappaeg+1/\mu+e_2}\right).
\end{align}
Since $\rho_{\min} \leq \dfrac{\omega_C\epsilon}{\kappaeg+1/\mu+e_2}$, then we conclude that \eqref{eq_rho_temp2} holds. 

Since $\rho_{\min} \leq \Delta_0^{\textnormal{init}}=\rho_0^{\textnormal{init}}$ by \algref{alg_dfo}, we must have $k(0)>0$ and $\rho_{k(0)-1} \geq \rho_{\min}>\rho_{k(0)}^{\textnormal{init}}$. This reduction in $\rho$ can only happen from a safety step or an unsuccessful step, and in both cases, we have $\rho_{k(0)}^{\textnormal{init}}=\alpha_1\rho_{k(0)-1}$. Therefore, $\rho_{k(0)-1} \leq \rho_{\min} /\alpha_1$. Also, by \lemref{lem_step_bound}, \lemref{lem_eta_bound} and \assref{ass_boundedhess}, we have 
\begin{align} \label{eq_rho_temp3}
     \norm{\bs_{k(0)-1}} \geq c_2 \tau_{k(0)-1} \min\left\{\Delta_{k(0)-1}, \frac{\epsilon_g}{\kappa_H}\right\}.
\end{align}

If we have a safety step, we know 
\begin{align} \label{eq_rho_temp4}
    \norm{\bs_{k(0)-1}} \leq \tau_{k(0)-1}\gamma_S\rho_{k(0)-1} \leq \frac{\tau_{k(0)-1}\gamma_S}{\alpha_1}\rho_{\min}.
\end{align}
Combining \eqref{eq_rho_temp3} and \eqref{eq_rho_temp4}, 
it follows from the assumption $\gamma_S < c_2$ in \algref{alg_dfo} that
\begin{align}
    \min\left\{\Delta_{k(0)-1}, \frac{\epsilon_g}{\kappa_H}\right\} \leq \frac{\gamma_S \rho_{\min}}{c_2 \alpha_1} < \frac{\rho_{\min}}{\alpha_1}.
\end{align}
Since $\rho_{\min} / \alpha_1 \leq \modification{c_2 \gammadec \epsilon_g / (2\kappa_H)} \leq \epsilon_g / \kappa_H$, then  $\min\left\{\Delta_{k(0)-1}, \epsilon_g / \kappa_H\right\} = \Delta_{k(0)-1}$. Hence in \eqref{eq_rho_temp3}, $\norm{\bs_{k(0)-1}} \geq c_2 \tau_{k(0)-1} \Delta_{k(0)-1} > \tau_{k(0)-1} \gamma_S\rho_{k(0)-1}$
, which contradicts our assumption that the safety step is called. Therefore, the iteration $k(0)-1$ must be an unsuccessful step. To obtain reduction in $\rho$, the update rule of $\Delta_{k(0)}^{\textnormal{init}}$ in \algref{alg_dfo} implies that 
\begin{align}
    \gammadec \norm{\bs_{k(0)-1}} \leq \min\left\{\gammadec \Delta_{k(0)-1}, \norm{\bs_{k(0)-1}}\right\} \leq \tau_{k(0)-1} \rho_{k(0)-1}.
\end{align}
Therefore, $\norm{\bs_{k(0)-1}} \leq \tau_{k(0)-1} \gammadec^{-1} \rho_{k(0)-1} \leq \tau_{k(0)-1} \gammadec^{-1} \alpha_1^{-1} \rho_{\min}$. From this, \eqref{eq_rho_temp3} together with \eqref{eq_rho_min}, we have 
\begin{align} \label{eq_rho_temp4.5}
    \min\left\{\Delta_{k(0)-1}, \frac{\epsilon_g}{\kappa_H}\right\} &\leq \frac{\rho_{\min}}{c_2 \gammadec \alpha_1} \notag\\
    &< \frac{2\rho_{\min}}{c_2 \gammadec \alpha_1} \notag\\ &\leq \min\left\{\frac{\epsilon_g}{\kappa_H}, \left(\kappaeg + \frac{\kappaef}{e_3 c_1 (1-\beta_2)}+e_2\right)^{-1}\epsilon\right\}.
\end{align}
Then $\min\left\{\Delta_{k(0)-1}, \epsilon_g/\kappa_H\right\} = \Delta_{k(0)-1}$. 
From this and \lemref{lem_eta_bound}, we obtain
\begin{align} \label{eq_rho_temp5}
    \Delta_{k(0)-1} \leq \left(\kappaeg + \frac{\kappaef}{e_3 c_1 (1-\beta_2)}+e_2\right)^{-1}\epsilon \quad \text{and} \quad \Delta_{k(0)-1} \leq \frac{\epsilon_g}{\kappa_H} \leq \frac{\overline{\eta}_1(\bx_{k(0)-1})}{\kappa_H}.
\end{align}

Now suppose that 
\begin{align} \label{eq_rho_temp6}
    \Delta_{k(0)-1} > \dfrac{e_3c_1(1-\beta_2)}{\kappaef}\overline{\eta}_1(\bx_{k(0)-1}).
\end{align} From \algref{alg_dfo}, regardless of the call of the criticality phase, we have 
\begin{align}
    \eta_1(\bx_{k(0)-1})-\overline{\eta}_1(\bx_{k(0)-1})<e_2\Delta_{k(0)-1}.
\end{align}
Then using \lemref{lem_crit_perturb} and \eqref{eq_rho_temp6} we have
\begin{align}
    \epsilon \leq \Psi_1(\boldsymbol{x}_{k(0)-1}) &= \Psi_1(\boldsymbol{x}_{k(0)-1}) - \eta_1(\bx_{k(0)-1}) + \eta_1(\bx_{k(0)-1}) \\
    &\leq \kappaeg\Delta_{k(0)-1}+\eta_1(\bx_{k(0)-1}) \\
    &\leq (\kappaeg+e_2)\Delta_{k(0)-1}+\overline{\eta}_1(\bx_{k(0)-1}) \\
    &< \left(\kappaeg+\frac{\kappaef}{e_3c_1(1-\beta_2)}+e_2\right)\Delta_{k(0)-1},
\end{align}
contradicting \eqref{eq_rho_temp5}. That is, \eqref{eq_rho_temp6} is false and so together with \eqref{eq_rho_temp5}, we have \eqref{eq_suc_safety}. Note that $p_{k(0)-1}$ is a fully linear model of $f$ in the unsuccessful step, then \lemref{lem_suc_safety} implies that iteration $k(0)-1$ is not an unsuccessful step, which leads to a contradiction. 
\end{proof}

\begin{remark}
    The proof of \lemref{lem_rho_min} fixes a small error in the proof of \cite[Lemma 3.8]{cartis2019derivative}: When deriving a bound for the step size $\norm{\bs_{k(0)-1}}$ at iteration $k(0)-1$ which is either a safety step or an unsuccessful step, the first inequality in (3.14) of \cite[Lemma 3.8]{cartis2019derivative} should be $\max(\gamma_S, \gammadec^{-1}) \rho_{k(0)-1}$ instead of $\min(\gamma_S, \gammadec^{-1}) \rho_{k(0)-1}$. This affects the bound of $\Delta_{k(0)-1}$ in (3.16) of \cite[Lemma 3.8]{cartis2019derivative}. Setting $\tau_{k(0)-1} = 1$ in our proof, we follow the same approach as \cite{cartis2019derivative} to conclude that the safety step is not called. To deal with the case of an unsuccessful step, we adjust the constant in the expression of $\rho_{\min}$ so that we can bound $\Delta_{k(0)-1}$ in \eqref{eq_rho_temp4.5} similar to (3.16) in \cite[Lemma 3.8]{cartis2019derivative}. After these corrections, the result in \cite[Lemma 3.8]{cartis2019derivative} remains unchanged except that $\rho_{\min}$ has a different expression
    \begin{equation*}
        \rho_{\min} \coloneqq \min\left(\Delta_{0}^{\textnormal{init}}, \frac{\omega_C \epsilon}{\kappaeg + 1/\mu}, \frac{c_2 \gammadec\alpha_1 \epsilon_g}{2\kappa_{H}}, \frac{c_2 \gammadec\alpha_1}{2}\left(\kappaeg + \frac{2\kappaef}{c_1(1-\eta_2)}\right)^{-1}\epsilon\right),
    \end{equation*}
    which does not affect any of the results proved later in \cite{cartis2019derivative}.
\end{remark}

Our convergence results below are based on \cite[Chapter 10]{conn2009introduction}. Moreover, we consider the possibility of safety phases and maintenance on $\rho_k$ as presented in \cite{cartis2019derivative}. However, we use a different criticality measure $\Psi_1$ and its inaccurate estimation $\overline{\eta}_1$ in our proofs. The following lemma proves the Lipschitz continuity of $\Psi_1$, which allows us to follow the convergence analysis in \cite[Chapter 10]{conn2009introduction}. Note that when $h$ is chosen to be an indicator function of a convex set, this lemma is equivalent to \cite[Theorem 3.4]{cartis2012adaptive}.

\begin{lemma} \label{lem_lip_psi}
Suppose that Assumptions~\ref{ass_smooth} and \ref{ass_regu} hold. Then $\Psi_1$ is Lipschitz continuous in $\operatorname{dom} h$. That is, for all $x, y \in \operatorname{dom} h$,
\begin{align}
    \abs{\Psi_1(x) - \Psi_1(y)} \leq L_{\Psi}\norm{x-y},
\end{align}
where constant $L_{\Psi} \coloneqq L_{\grad f} + 2L_h$.
\end{lemma}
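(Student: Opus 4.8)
The plan is to reduce $\Psi_1$ to a pointwise maximum of a parametrized function and then exploit the elementary fact that taking a maximum is nonexpansive. First I would substitute the definition \eqref{eq_cri_l} of $l$ into \eqref{eq_cri_psi} with $r=1$; the $f(\bx)$ terms cancel, leaving $\Psi_1(\bx) = h(\bx) - \min_{\norm{\bs}\leq 1}\bigl[\grad f(\bx)^{T}\bs + h(\bx+\bs)\bigr]$. Rewriting the minimum as a maximum and introducing the shorthand $\phi(\bx,\bs) \defeq -\grad f(\bx)^{T}\bs + h(\bx) - h(\bx+\bs)$, this becomes
\begin{align}
    \Psi_1(\bx) = \max_{\norm{\bs}\leq 1}\phi(\bx,\bs),
\end{align}
where the maximum is attained since $\phi(\bx,\cdot)$ is continuous (by \assref{ass_smooth} and \assref{ass_regu}) on the compact set $B(\b0,1)$.

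Next I would invoke the standard inequality $\bigl|\max_{\bs} a(\bs) - \max_{\bs} b(\bs)\bigr| \leq \max_{\bs}\abs{a(\bs)-b(\bs)}$ applied to $a=\phi(\bx,\cdot)$ and $b=\phi(\by,\cdot)$, which gives
\begin{align}
    \abs{\Psi_1(\bx)-\Psi_1(\by)} \leq \max_{\norm{\bs}\leq 1}\abs{\phi(\bx,\bs)-\phi(\by,\bs)}.
\end{align}
The integrand difference is $\phi(\bx,\bs)-\phi(\by,\bs) = -(\grad f(\bx)-\grad f(\by))^{T}\bs + (h(\bx)-h(\by)) - (h(\bx+\bs)-h(\by+\bs))$, so by the triangle inequality, Cauchy--Schwarz, and the Lipschitz estimates $\norm{\grad f(\bx)-\grad f(\by)}\leq L_{\grad f}\norm{\bx-\by}$ and $L_h$-Lipschitz continuity of $h$ (applied to both the $\bx,\by$ pair and the shifted $\bx+\bs,\by+\bs$ pair, whose separation is again $\norm{\bx-\by}$), I obtain a bound uniform in $\bs$:
\begin{align}
    \abs{\phi(\bx,\bs)-\phi(\by,\bs)} \leq L_{\grad f}\norm{\bx-\by}\,\norm{\bs} + 2L_h\norm{\bx-\by}.
\end{align}
Since $\norm{\bs}\leq 1$, the right-hand side is at most $(L_{\grad f}+2L_h)\norm{\bx-\by}$, and taking the maximum over $\bs$ yields the claim with $L_{\Psi}=L_{\grad f}+2L_h$.

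The main subtlety to handle carefully is the domain of $h$. Under \assref{ass_regu} as stated ($h:\R^n\to\R$ finite-valued), $\operatorname{dom} h = \R^n$ and the $L_h$-Lipschitz bound applies at every point $\bx+\bs$, $\by+\bs$, so the argument above is clean. If instead one allowed $h$ to take the value $+\infty$, the feasible set $\{\bs : \norm{\bs}\leq 1,\ \bx+\bs\in\operatorname{dom} h\}$ in the inner maximization would depend on the base point, and the nonexpansiveness of the maximum would require the two feasible sets for $\bx$ and $\by$ to coincide; this is where extra care (or the finite-valuedness assumption) is needed. Since the excerpt restricts to $x,y\in\operatorname{dom} h$ and takes $h$ real-valued, I expect no further obstacle and the three Lipschitz contributions combine exactly into $L_{\grad f}+2L_h$.
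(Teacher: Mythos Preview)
Your proof is correct and reaches the same Lipschitz constant $L_{\Psi}=L_{\grad f}+2L_h$ as the paper, via essentially the same mechanism: both arguments ultimately split the difference $\Psi_1(\bx)-\Psi_1(\by)$ into three pieces controlled respectively by the Lipschitz continuity of $\grad f$, of $h$ at the base points, and of $h$ at the shifted points.

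The packaging differs slightly. The paper works directly with minimizers: it fixes $\bs_x,\bs_y\in B(\b0,1)$ attaining the inner minima, writes out $\Psi_1(\bx)-\Psi_1(\by)$ explicitly, adds and subtracts $\grad f(\by)^T\bs_x + h(\by+\bs_x)$, uses optimality of $\bs_y$ to drop one piece, and then bounds the remaining three terms; the reverse inequality comes from swapping $\bx$ and $\by$. You instead recast $\Psi_1(\bx)=\max_{\norm{\bs}\leq 1}\phi(\bx,\bs)$ and invoke the nonexpansiveness of the supremum, $\bigl|\max_{\bs}a(\bs)-\max_{\bs}b(\bs)\bigr|\leq\max_{\bs}\abs{a(\bs)-b(\bs)}$, which delivers both directions at once and is a bit more streamlined. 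Your discussion of the $\operatorname{dom} h$ subtlety is also on point and not made explicit in the paper's proof.
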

\begin{proof}
Take any $x, y \in \operatorname{dom} h$. From \eqref{eq_cri_psi}, we have
\begin{align} \label{eq_psi_lip_temp1}
    \Psi_1(\bx) - \Psi_1(\by) &= h(\bx) - h(\by) + \min_{\norm{\bs} \leq 1} [\grad f(\by)^{T}\bs + h(\by + \bs)] \\
    &\quad - \min_{\norm{\bs} \leq 1} [\grad f(\bx)^{T}\bs + h(\bx + \bs)]. 
\end{align}
Since $\grad f$ and $h$ are continuous by Assumptions~\ref{ass_smooth} and \ref{ass_regu}, applying the Weierstrass Theorem, there exists $\bs_x, \bs_y \in B(\b0, 1) \cap \textnormal{dom}h$ such that 
\begin{align}
    \min_{\norm{\bs} \leq 1} \grad f(\bx)^{T}\bs + h(\bx + \bs) &= \grad f(\bx)^{T}\bs_x + h(\bx + \bs_x), \\
    \min_{\norm{\bs} \leq 1} \grad f(\by)^{T}\bs + h(\by + \bs) &= \grad f(\by)^{T}\bs_y + h(\by + \bs_y). \label{eq_psi_lip_temp2}
\end{align}
Plugging them into \eqref{eq_psi_lip_temp1}, we obtain
\begin{align*}
    \Psi_1(\bx) - \Psi_1(\by) &= h(\bx) - h(\by) + \left(\grad f(\by)^{T}\bs_y + h(\by + \bs_y)\right) \\
    &\quad -  \left(\grad f(\bx)^{T}\bs_x + h(\bx + \bs_x)\right) \\
    &= h(\bx) - h(\by) + \grad f(\by)^{T}\bs_y + h(\by+\bs_y) -  \grad f(\by)^{T}\bs_x - h(\by+\bs_x) \\
    &\quad + \left(\grad f(\by) - \grad f(\bx)\right)^{T}\bs_x + h(\by + \bs_x) - h(\bx + \bs_x) \\
    &\leq \abs{h(\bx) - h(\by)} + \abs{\left(\grad f(\by) - \grad f(\bx)\right)^{T}\bs_x} + \abs{h(\by + \bs_x) - h(\bx + \bs_x)} \\
    &\leq (L_{\grad f} + 2L_h)\norm{\bx-\by}.
\end{align*}
In the first inequality, we use the triangle inequality and the fact that $\bs_y$ is a minimizer of \eqref{eq_psi_lip_temp2}. The last line follows from  Assumptions~\ref{ass_smooth} and \ref{ass_regu}, and $\|\bs_x\|,\|\bs_y\|\leq 1$. 
Since we can interchange the role of $\bx$ and $\by$ in the above argument, $\Psi_1$ is Lipschitz continuous in $\operatorname{dom} h$ with Lipschitz constant $L_{\grad f} + 2L_h$.
\end{proof}

The following convergence results consider the case when the number of successful iterations is finite.

\begin{lemma} \label{lem_finite_delta}
Suppose that Assumptions~\ref{ass_smooth}, \ref{ass_regu} and \ref{ass_boundedhess} hold. If there are finitely many successful iterations, then $\lim_{k \rightarrow \infty}\Delta_k = 0$ and $\lim_{k \rightarrow \infty} \Psi_1(\bx_k) = 0$.
\end{lemma}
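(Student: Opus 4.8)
The plan is to pass to the tail of the iteration, show that both $\Delta_k$ and its lower bound $\rho_k$ vanish, and then read off the criticality conclusion from \lemref{lem_rho_min}. Since there are only finitely many successful iterations, let $k_0$ be the index just after the last one. Then $\bx_k = \bx_{k_0} =: \bx^*$ for every $k \geq k_0$, and each such iteration is a safety, model-improvement, or unsuccessful phase. In particular $\Psi_1(\bx_k) = \Psi_1(\bx^*)$ is constant for $k \geq k_0$, so $\lim_k \Psi_1(\bx_k)$ exists and equals $\Psi_1(\bx^*)$; it remains to prove $\Delta_k \to 0$ and $\Psi_1(\bx^*) = 0$.

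For the decay of the radius I would follow the radius-update dynamics, as in the corresponding argument of \cite{cartis2019derivative}. The sequence $\rho_k$ is nonincreasing, so $\rho_k \downarrow \rho^* \geq 0$. The key structural observation is that a model-improvement phase leaves $Y_{k+1}$ $\Lambda$-poised in the current ball, and the subsequent criticality test either keeps it poised or re-poises it in a smaller ball; hence a model-improvement phase is never immediately followed by another one, and infinitely many safety or unsuccessful phases must occur for $k \geq k_0$. Each such phase drives $\Delta$ towards the floor $\rho_k$, and each time the reduced value meets this floor the algorithm contracts $\rho$ by the factor $\alpha_1$ (resetting $\Delta$ to $\alpha_2 \rho_k$); showing that this trigger fires infinitely often yields $\rho_k \to 0$, and then $\Delta_k \to 0$. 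The degenerate possibility that the run stalls inside \algref{alg_geo} when $\bx^*$ is already critical is harmless, since there $\Delta_k = \omega_C^{\,i-1}\Delta_k^{\textnormal{init}} \to 0$ directly.

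The final step is a contrapositive of \lemref{lem_rho_min}. If $\Psi_1(\bx^*) = \epsilon > 0$, then $\Psi_1(\bx_k) \geq \epsilon$ holds at every iteration of the tail; applying \lemref{lem_rho_min} to that tail (with $\bx^*$ as starting point) gives $\rho_k \geq \rho_{\min} > 0$ for all large $k$, contradicting $\rho_k \to 0$. Hence $\Psi_1(\bx^*) = 0$, and therefore $\lim_k \Psi_1(\bx_k) = \Psi_1(\bx^*) = 0$.

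I expect the main obstacle to be the decay $\Delta_k \to 0$ rather than the criticality conclusion, which is immediate once $\rho_k \to 0$ is in hand. The delicate point is the factor $1/\tau_k$ in the unsuccessful update of $\Delta_{k+1}^{\textnormal{init}}$, which can momentarily enlarge the radius and so obstruct a naive monotone-contraction argument. I would control it by noting that on any stretch where $\Delta_k$ (equivalently $\rho_k$) stays bounded below, \lemref{lem_eta_bound} bounds $\overline{\eta}_1(\bx_k)$ below while full linearity of $p_k$ (via \assref{ass_smooth} and boundedness of $\grad f$ on $\mathcal{B}$) bounds $\norm{\bg_k}$ above, so that $\tau_k$ is bounded away from $0$ and the enlargement is by a bounded factor only; combined with the lower bound on $\norm{\bs_k}$ from \lemref{lem_step_bound} and the safety-phase threshold, this is enough to force the floor $\rho_k$ to be reached infinitely often and hence $\rho_k, \Delta_k \to 0$.
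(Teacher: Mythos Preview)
Your proposal is correct, and the route you take for the second conclusion is genuinely different from the paper's.

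For $\Delta_k\to 0$, both you and the paper argue that after the last successful iteration only safety, model-improvement, and unsuccessful phases remain, that model-improvement cannot occur twice in a row, and hence that infinitely many safety or unsuccessful phases drive the radius down. The paper simply asserts that each such step shrinks $\Delta_k$ by a factor $\max\{\alpha_2,\omega_S,\gammadec\}<1$ and does not discuss the $1/\tau_k$ inflation in the unsuccessful-phase update; you are right to flag it, and your proposed control (on a stretch with $\Delta_k$ bounded below, \lemref{lem_eta_bound} bounds $\overline\eta_1(\bx_k)$ below while $\bx_k\equiv\bx^*$ and full linearity bound $\|\bg_k\|$ above, so $\tau_k$ is bounded away from zero) is the correct way to close that gap.

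For $\Psi_1(\bx_k)\to 0$, the paper does \emph{not} invoke \lemref{lem_rho_min}. Instead, for each $k>k_0$ it picks a nearby iteration $i_k$ with $p_{i_k}$ fully linear and bounds
\[
\Psi_1(\bx_k)\;\leq\; L_\Psi\|\bx_k-\bx_{i_k}\| \;+\; \kappaeg\Delta_{i_k} \;+\; e_2\Delta_{i_k} \;+\; \overline\eta_1(\bx_{i_k}),
\]
via \lemref{lem_lip_psi}, \lemref{lem_crit_perturb}, and the inexactness bound on $\overline\eta_1$; it then argues $\overline\eta_1(\bx_{i_k})\to 0$ by contradiction using \lemref{lem_suc_safety} and \lemref{lem_step_bound} (if not, large-$k$ iterations would be very successful or safety, both impossible). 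Your contrapositive-of-\lemref{lem_rho_min} argument is shorter and perfectly legitimate: \lemref{lem_rho_min} is proved earlier and independently, and applying it to the tail merely replaces $\Delta_0^{\textnormal{init}}$ by $\rho_{k_0}>0$ in the definition of $\rho_{\min}$. What the paper's route buys is self-containment and an explicit mechanism ($\overline\eta_1\to 0$ because the radius is shrinking); what yours buys is economy, since the heavy lifting has already been packaged in \lemref{lem_rho_min}.
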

\begin{proof}
Let us consider iterations that come after the last successful iteration $k_0$. From \algref{alg_dfo} and \lemref{lem_delta_bound}, we know that we can have either one model-improving phase or finite many (uniformly bounded by an integer $N$) iterations of \algref{alg_geo} in the criticality phase before the model is modified to be fully linear.
Therefore, we have an infinite number of iterations where the model is fully linear. That is, there are infinitely many iterations that are either safety or unsuccessful steps. In either case, the trust region radius is reduced by a factor $\max\{\alpha_2, \omega_S, \gammadec\}<1$. Since $\Delta_k$ is increased only in successful iterations, then $\Delta_k$ converges to zero.
For each iteration $k > k_0$, let $i_{k}$ denote the index of the first iteration after $k_0$ for which $p_k$ is a fully linear model for $f$. Since $\Delta_k$ is reduced in \algref{alg_geo}, then as $k\to\infty$, 
\begin{align} \label{eq_finite_temp1}
    \norm{\bx_{k} - \bx_{i_k}} \leq N\Delta_{k} \to 0.
\end{align}
Note that we can rewrite $\Psi_1$ and apply the triangle inequality to obtain
\begin{align*}
    \Psi_1(\bx_k) &\leq \abs{\Psi_1(\bx_k) - \Psi_1(\bx_{i_k})} + \abs{\Psi_1(\bx_{i_k}) - \eta_1(\bx_{i_k})} + \abs{\eta_1(\bx_{i_k}) - \overline{\eta}_1(\bx_{i_k})} + \overline{\eta}_1(\bx_{i_k}) \\
    &\leq L_{\Psi}\norm{\bx_{k} - \bx_{i_k}} + \kappaeg\Delta_{i_k} + e_2 \Delta_{i_k} + \overline{\eta}_1(\bx_{i_k}),
    \stepcounter{equation}\tag{\theequation}\label{eq_finite_temp2}
\end{align*}
which follows from \lemref{lem_lip_psi}, \lemref{lem_crit_perturb} and the approximation error of $\eta_{1}$ given by \algref{alg_dfo}. We observe that $\overline{\eta}_1(\bx_{i_k}) \to 0$ as $k \to \infty$. Otherwise, \eqref{eq_suc_safety} holds for sufficiently large $k$ and then by \lemref{lem_suc_safety}, iteration $k$ must be a safety phase. However, when $k$ is large enough, \lemref{lem_step_bound} implies that $\norm{\bs_k} \geq c_2 \tau_k \Delta_k \geq \tau_k \gamma_S \Delta_k$,
\end{proof}

\begin{lemma} \label{lem_conv_delta}
Suppose that Assumptions~\ref{ass_smooth}, \ref{ass_regu} and \ref{ass_boundedhess} hold. Then we have $\lim_{k \rightarrow \infty}\Delta_k = 0$ and hence $\lim_{k \rightarrow \infty} \rho_k = 0$. 
\end{lemma}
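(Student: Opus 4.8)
The plan is to argue by a dichotomy on the number of successful iterations. If there are only finitely many successful iterations, then \lemref{lem_finite_delta} already gives $\lim_{k\to\infty}\Delta_k=0$, so nothing more is needed. Hence the substance of the proof is the case of infinitely many successful iterations, and throughout I would exploit that $\Phi=f+h$ is bounded below by $\flow+\hlow$ (Assumptions~\ref{ass_smooth} and~\ref{ass_regu}) and that $\{\Phi(\bx_k)\}$ is non-increasing, since $\Phi$ changes only on successful iterations and then strictly decreases.

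First I would establish a summability estimate. On any successful iteration ($R_k\geq\beta_1$), the ratio \eqref{eq_ratio} together with the Cauchy-type decrease \eqref{eq_mk_dec} and \assref{ass_boundedhess} (so that $\max\{1,\norm{H_k}\}\leq\kappa_H$) gives
\[
  \Phi(\bx_k)-\Phi(\bx_{k+1}) \geq \beta_1 e_3 c_1\,\overline{\eta}_1(\bx_k)\min\left\{\Delta_k,\frac{\overline{\eta}_1(\bx_k)}{\kappa_H}\right\}.
\]
Summing over all successful iterations $\mcS$ and using $\Phi\geq\flow+\hlow$, the telescoping sum $\sum_{k\in\mcS}(\Phi(\bx_k)-\Phi(\bx_{k+1}))$ is finite, so that $\sum_{k\in\mcS}\overline{\eta}_1(\bx_k)\min\{\Delta_k,\overline{\eta}_1(\bx_k)/\kappa_H\}<\infty$ and this summand tends to $0$ along $\mcS$. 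Combining this with the uniform lower bound $\overline{\eta}_1(\bx_k)\geq\min\{e_1\epsilon_C,\Delta_k/\mu\}$ from \lemref{lem_eta_bound}, a short contradiction argument (if $\Delta_k\geq\delta>0$ along a subsequence of $\mcS$, then $\overline{\eta}_1(\bx_k)$ is bounded below there, making the summand bounded below by a positive constant) yields $\Delta_k\to0$ along $\mcS$.

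It remains to upgrade convergence along $\mcS$ to convergence of the whole sequence $\Delta_k$, and this is where I expect the main difficulty. The idea is that every sufficiently large index $j$ is preceded by a most-recent successful iteration $s(j)$ with $s(j)\to\infty$; at that successful step $\Delta$ grows by at most the factor $\gammaincbar$, and on the intervening safety and unsuccessful iterations the radius is governed by the update rules of \algref{alg_dfo}. The subtlety is that the unsuccessful update $\Delta_{k+1}^{\textnormal{init}}=\max\{\min(\gammadec\Delta_k,\norm{\bs_k})/\tau_k,\rho_k\}$ contains the factor $1/\tau_k\geq1$, so a crude bound on the accumulated growth of $\Delta$ between successful iterations is not available; I would need to use that this inflation is tied to short steps $\norm{\bs_k}$ (controlled via \lemref{lem_step_bound}) and to the floor $\rho_k$, mirroring the radius-management argument of \cite[Lemma 3.9]{cartis2019derivative} but carrying the extra $\tau_k$ bookkeeping. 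This is the genuinely new technical point relative to the smooth analysis.

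Finally, the stated conclusion $\lim_{k\to\infty}\rho_k=0$ follows immediately (``hence'') from $\lim_{k\to\infty}\Delta_k=0$ together with the invariant $\rho_k\leq\Delta_k$ maintained by \algref{alg_dfo}. As a consistency check I would also verify directly that $\rho_k$, being non-increasing and changing only by the discrete factor $\alpha_1<1$, cannot converge to a positive limit: such a limit would make $\rho_k$ eventually constant and force $\Delta_k\geq\rho_k$ bounded away from $0$, contradicting either \lemref{lem_finite_delta} or the convergence along $\mcS$ established above.
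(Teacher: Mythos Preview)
Your outline matches the paper's proof exactly: the same dichotomy on $|\mcS|$, invoking \lemref{lem_finite_delta} for the finite case; the same use of \eqref{eq_mk_dec}, \assref{ass_boundedhess}, and the lower bound $\overline{\eta}_1(\bx_k)\geq\min\{e_1\epsilon_C,\Delta_k/\mu\}$ from \lemref{lem_eta_bound} to force $\Delta_k\to0$ along successful iterations; and the same closing observation $\rho_k\leq\Delta_k$.

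The one place you diverge is in extending from $\mcS$ to the full sequence. You flag the factor $1/\tau_k$ in the unsuccessful-step update $\Delta_{k+1}^{\textnormal{init}}=\max\{\min(\gammadec\Delta_k,\norm{\bs_k})/\tau_k,\rho_k\}$ as an obstacle to the standard ``$\Delta$ can only grow on successful steps'' argument, and propose extra bookkeeping via \lemref{lem_step_bound} and the $\rho_k$ floor to control it. The paper does not engage with this at all: it simply asserts that the trust-region radius can be increased only during a successful iteration, by at most a factor $\gammaincbar$, and concludes $\Delta_k\leq\gammaincbar\Delta_{s_k}$ for $s_k$ the last successful index before $k$, citing \cite[Lemma 10.9]{conn2009introduction}. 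So the additional $\tau_k$ analysis you anticipate is not carried out in the paper's proof; your caution here goes beyond what is written there.
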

\begin{proof}
This proof is similar to \cite[Lemma 10.9]{conn2009introduction}. Let $\mcS$ denote the set of successful iterations. When $\abs{\mcS}<\infty$, this follows from \lemref{lem_finite_delta}. Now we consider the case when $\mcS$ is infinite. For any $k\in\mcS$, we have
\begin{align} 
    \Phi(\bx_k) - \Phi(\bx_{k+1}) \geq \beta_1[m_k(\bx_k) - m_k(\bx_k + \bs_k)].
\end{align}
Applying \lemref{lem_mk_dec}, \lemref{lem_eta_bound} and \assref{ass_boundedhess}, we obtain that 
\begin{align} \label{eq_conv_delta_temp1}
    \Phi(\bx_k) - \Phi(\bx_{k+1}) \geq \beta_1 e_3 c_1 \min\left\{e_1 \epsilon_C, \Delta_k / \mu\right\}\min\left\{\Delta_k, \frac{\min\left\{e_1 \epsilon_C, \Delta_k / \mu\right\}}{\kappa_H}\right\}.
\end{align}
Since $\mcS$ is infinite and $\Phi$ is bounded below by \assref{ass_regu}, then the left hand side of \eqref{eq_conv_delta_temp1} converges to $0$. Thus, $\lim_{k\in\mcS} \Delta_k = 0$. Let $k \not\in \mcS$ be the index of an iteration after the first successful iteration, and let $s_k$ denote the index of the last successful iteration before $k$. Since the trust region radius can be increased only during a successful iteration, and only by a factor of at most $\gammaincbar$, then $\Delta_k \leq \gammaincbar \Delta_{s_k}$. Since $s_k \in \mcS$, then $\Delta_{s_k} \to\ 0$ as $k\to0$ and hence $\Delta_k \to\ 0$ for $k \not\in \mcS$. Therefore, $\lim_{k\to\infty} \Delta_k = 0$. It follows immediately from the fact $\rho_k \leq \Delta_k$ that $\lim_{k\to\infty} \rho_k = 0$.
\end{proof}

\begin{lemma} \label{lem_conv_liminf}
Suppose that Assumptions~\ref{ass_smooth}, \ref{ass_regu} and \ref{ass_boundedhess} hold. Then
\begin{align}
    \liminf_{k\to\infty} \Psi_1(\bx_k) = 0.
\end{align}
\end{lemma}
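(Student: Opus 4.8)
The plan is to argue by contradiction, using the two immediately preceding results as the key tools: Lemma~\ref{lem_conv_delta}, which guarantees $\lim_{k\to\infty}\rho_k=0$, and Lemma~\ref{lem_rho_min}, which supplies a uniform positive lower bound on $\rho_k$ whenever the true criticality measure stays bounded away from zero. Suppose, contrary to the claim, that $\liminf_{k\to\infty}\Psi_1(\bx_k)>0$. Then there exist a constant $\epsilon>0$ and an index $K$ such that $\Psi_1(\bx_k)\geq\epsilon$ for all $k\geq K$. Since $\Psi_1(\bx)\geq 0$ for all $\bx$ (as noted following \eqref{eq_cri_psi}), establishing such a contradiction will force $\liminf_{k\to\infty}\Psi_1(\bx_k)=0$.

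The next step is to invoke Lemma~\ref{lem_rho_min} to deduce that $\rho_k$ is bounded below by some $\rho_{\min}>0$ for all sufficiently large $k$. Here a small technical point must be handled: Lemma~\ref{lem_rho_min} is stated under the hypothesis $\Psi_1(\bx_k)\geq\epsilon$ for \emph{all} $k$, whereas we only have this for the tail $k\geq K$. To bridge this gap I would revisit the proof of Lemma~\ref{lem_rho_min}, which takes $k(0)$ to be the first index at which $\rho_{k(0)}<\rho_{\min}$ and derives a contradiction purely from the behaviour of iteration $k(0)-1$ (its status as a safety or unsuccessful step, the step-size bound \eqref{eq_rho_temp3}, and the criticality estimate at that single iteration). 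This reasoning is local, so it applies verbatim once we restrict attention to indices $k\geq K$, replacing $\Delta_0^{\textnormal{init}}$ by $\Delta_K^{\textnormal{init}}$ in the definition of $\rho_{\min}$ and using $\rho_K^{\textnormal{init}}$ as the initial lower bound. This yields $\rho_k\geq\rho_{\min}>0$ for all $k\geq K$.

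Finally, Lemma~\ref{lem_conv_delta} asserts $\lim_{k\to\infty}\rho_k=0$, which directly contradicts the uniform lower bound $\rho_k\geq\rho_{\min}>0$ obtained above. Hence the assumption $\liminf_{k\to\infty}\Psi_1(\bx_k)>0$ is untenable, and together with non-negativity of $\Psi_1$ we conclude $\liminf_{k\to\infty}\Psi_1(\bx_k)=0$.

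I expect the only genuine obstacle to be the bookkeeping in the middle paragraph: one must verify carefully that the argument of Lemma~\ref{lem_rho_min} survives when the criticality lower bound holds only from index $K$ onward, rather than from $k=0$. Everything else is an immediate combination of the two cited lemmas, so the proof should be short once this point is dispatched.
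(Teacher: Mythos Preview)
Your proposal is correct and follows essentially the same approach as the paper: assume $\Psi_1(\bx_k)\geq\epsilon$ for contradiction, invoke Lemma~\ref{lem_rho_min} to get $\rho_k\geq\rho_{\min}>0$, and contradict Lemma~\ref{lem_conv_delta}. The paper's proof is in fact just three lines and simply assumes $\Psi_1(\bx_k)\geq\epsilon$ for \emph{all} $k$, glossing over the tail issue you carefully address; your treatment of that point is more rigorous than the paper's own.
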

\begin{proof}
Assume for contradiction that there is a constant $\epsilon>0$ such that $\Psi_1(\boldsymbol{x}_k) \geq \epsilon$ for all $k$. In this case, by \lemref{lem_rho_min}, we have that $\Delta_k \geq \rho_k \geq \rho_{\min}>0$ for all $k$, contradicting \lemref{lem_conv_delta}.
\end{proof}

\begin{theorem} \label{thm_convergence}
Suppose that Assumptions~\ref{ass_smooth}, \ref{ass_regu} and \ref{ass_boundedhess} hold. Then
\begin{align}
    \lim_{k\to\infty} \Psi_1(\bx_k) = 0.
\end{align}
\end{theorem}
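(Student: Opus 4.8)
The plan is to upgrade the $\liminf$ result of \lemref{lem_conv_liminf} to a full limit by the standard contradiction argument for trust-region methods, combining a telescoping decrease estimate with the Lipschitz continuity of $\Psi_1$ from \lemref{lem_lip_psi}. Suppose the conclusion fails. Since \lemref{lem_conv_liminf} gives $\liminf_{k\to\infty}\Psi_1(\bx_k)=0$, failure of convergence forces $\limsup_{k\to\infty}\Psi_1(\bx_k)>0$, so there exist $\epsilon>0$ and an infinite subsequence $\{t_i\}$ with $\Psi_1(\bx_{t_i})\geq 2\epsilon$. Using $\liminf_{k\to\infty}\Psi_1(\bx_k)=0$ again, for each $i$ I would let $\ell_i>t_i$ be the first index with $\Psi_1(\bx_{\ell_i})<\epsilon$; this produces bands of indices $\{k : t_i\leq k<\ell_i\}$ on which $\Psi_1(\bx_k)\geq\epsilon$, and hence, by \lemref{lem_eta_bound}, on which $\overline{\eta}_1(\bx_k)\geq\epsilon_g>0$.

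Next I would produce a per-iteration decrease proportional to the genuine step length on these bands. By \lemref{lem_conv_delta} we have $\Delta_k\to 0$, so there is $K_0$ with $\Delta_k\leq\epsilon_g/\kappa_H$ for all $k\geq K_0$; restricting to $i$ large enough that $t_i\geq K_0$, \assref{ass_boundedhess} gives $\Delta_k\leq\overline{\eta}_1(\bx_k)/\max\{1,\norm{H_k}\}$, so the minimum in the sufficient-decrease condition \eqref{eq_mk_dec} is attained at $\Delta_k$ and $m_k(\bx_k)-m_k(\bx_k+\bs_k)\geq e_3 c_1\epsilon_g\Delta_k$. For a successful iteration $R_k\geq\beta_1$, combining this with $\norm{\bs_k}\leq\Delta_k$ yields $\Phi(\bx_k)-\Phi(\bx_{k+1})\geq\beta_1 e_3 c_1\epsilon_g\norm{\bx_{k+1}-\bx_k}$; on all other iterations $\bx_{k+1}=\bx_k$ and both sides vanish, so the bound holds for every $k\geq K_0$.

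To conclude I would telescope this bound across each band. Summing over $t_i\leq k<\ell_i$ and applying the triangle inequality gives $\Phi(\bx_{t_i})-\Phi(\bx_{\ell_i})\geq\beta_1 e_3 c_1\epsilon_g\norm{\bx_{t_i}-\bx_{\ell_i}}$. Since $\Phi(\bx_k)$ is nonincreasing (it decreases only on successful steps and is constant otherwise) and bounded below by $\flow+\hlow$, it converges, so the left-hand side tends to $0$ as $i\to\infty$; hence $\norm{\bx_{t_i}-\bx_{\ell_i}}\to 0$. Finally, the Lipschitz continuity of $\Psi_1$ from \lemref{lem_lip_psi} gives $\abs{\Psi_1(\bx_{t_i})-\Psi_1(\bx_{\ell_i})}\leq L_{\Psi}\norm{\bx_{t_i}-\bx_{\ell_i}}\to 0$, which contradicts $\Psi_1(\bx_{t_i})-\Psi_1(\bx_{\ell_i})\geq 2\epsilon-\epsilon=\epsilon$ for all $i$. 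This contradiction establishes $\lim_{k\to\infty}\Psi_1(\bx_k)=0$.

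The hard part will be the middle step: converting the model decrease \eqref{eq_mk_dec} into a bound proportional to the actual step length $\norm{\bx_{k+1}-\bx_k}$ uniformly along the bands. This relies on two ingredients already in place, namely the uniform lower bound $\overline{\eta}_1(\bx_k)\geq\epsilon_g$ coming from the inexactly computed criticality measure (\lemref{lem_eta_bound}) and the fact that $\Delta_k\to 0$ (\lemref{lem_conv_delta}) forcing $\Delta_k$ to be the active term in the minimum; care is needed to restrict attention to sufficiently large indices so that both hold simultaneously.
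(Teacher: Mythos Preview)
Your proposal is correct and follows essentially the same route as the paper's proof: both set up the contradiction via subsequences $\{t_i\}$ and $\{\ell_i\}$, use \lemref{lem_eta_bound} to get $\overline{\eta}_1(\bx_k)\geq\epsilon_g$ on the bands, invoke $\Delta_k\to 0$ from \lemref{lem_conv_delta} so that the sufficient decrease \eqref{eq_mk_dec} yields $\Phi(\bx_k)-\Phi(\bx_{k+1})\geq\beta_1 e_3 c_1\epsilon_g\Delta_k$ on successful steps, telescope, and finish with \lemref{lem_lip_psi}. The only cosmetic differences are that the paper first disposes of the finitely-many-successful case via \lemref{lem_finite_delta} and bounds $\norm{\bx_{t_i}-\bx_{\ell_i}}$ through $\sum\Delta_k$ rather than directly through $\sum\norm{\bx_{k+1}-\bx_k}$; your version absorbs both into the same telescoping estimate.
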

\begin{proof}
This proof is based on \cite[Theorem 10.13]{conn2009introduction} and \cite[Theorem 3.12]{cartis2019derivative}. Let $\mcS$ and $\mcM$ denote the set of successful and model-improving iterations, respectively. Our theorem holds when $\abs{\mcS}<\infty$ by \lemref{lem_finite_delta}. Assume, for the purpose of establishing a contradiction, that there exists a subsequence $\{t_i\} \in \mcS$ of successful iterations such that
\begin{align}
    \Psi_1(\bx_{t_i}) \geq 2\epsilon > 0,
\end{align}
for some $\epsilon>0$ and for all $i$. From \lemref{lem_conv_liminf}, we know that for each $t_i$, there exists a first successful iteration $l_i>t_i$ such that $\Psi_1(\bx_{l_i})<\epsilon$. Now we consider iterations whose indices are in the set $\mcK$ defined by
\begin{align}
    \mcK \coloneqq \bigcup_{i\in\N}\{k\in\N: t_i \leq k < l_i\},
\end{align}
where $t_i$ and $l_i$ belong to the two subsequences defined above. Therefore, for every $k \in \mcK$, we have $\Psi_1(\bx_{k}) \geq \epsilon$. By \lemref{lem_eta_bound}, $\overline{\eta}_1(\bx_k) \geq \epsilon_g > 0$ for $k \in \mcK$. From \lemref{lem_conv_delta}, $\lim_{k\to\infty} \Delta_k = 0$. Therefore, \lemref{lem_step_bound} implies that $\norm{\bs_k} \geq c_2 \tau_k \Delta_k \geq \tau_k \gamma_S \Delta_k$ if $k$ is sufficiently large.
\begin{align} \label{eq_conv_psi_temp1}
    \norm{\bx_{t_i} - \bx_{l_i}} \leq \sum_{\substack{k = t_i \\k \in \mcK}}^{l_i-1}\norm{\bx_{k} - \bx_{k+1}} = \sum_{\substack{k = t_i \\k \in \mcK \cap \mcS}}^{l_i-1} \norm{\bx_{k} - \bx_{k+1}} \leq \sum_{\substack{k = t_i \\k \in \mcK \cap \mcS}}^{l_i-1} \Delta_k.
\end{align}
For each $k \in \mcK \cap \mcS$, using $\overline{\eta}_1(\bx_k) \geq \epsilon_g$ and \assref{ass_boundedhess}, we obtain
\begin{align}
    \Phi(\bx_k) - \Phi(\bx_{k+1}) &\geq \beta_1[m_k(\bx_k) - m_k(\bx_k + \bs_k)] \\
    &\geq \beta_1 e_3 c_1 \overline{\eta}_1(\bx_k)\min\left\{\Delta_k, \frac{\overline{\eta}_1(\bx_k)}{\max\{1,\norm{H_k}\}}\right\} \\
    &\geq \beta_1 e_3 c_1 \epsilon_g \min\left\{\Delta_k, \frac{\epsilon_g}{\kappa_H}\right\}
\end{align}
Since $\lim_{k\to\infty} \Delta_k = 0$ by \lemref{lem_conv_delta}, then $\min\{\Delta_k, \epsilon_g/\kappa_H\} = \Delta_k$ for $k$ large enough. Thus,
\begin{align} \label{eq_conv_psi_temp2}
    \Delta_k \leq \frac{1}{\beta_1 e_3 c_1 \epsilon_g} [\Phi(\bx_k) - \Phi(\bx_{k+1})]
\end{align}
Combining \eqref{eq_conv_psi_temp1} and \eqref{eq_conv_psi_temp2}, we have
\begin{align*} 
    \norm{\bx_{t_i} - \bx_{l_i}} &\leq \sum_{\substack{k = t_i \\k \in \mcK \cap \mcS}}^{l_i-1} \frac{1}{\beta_1 e_3 c_1 \epsilon_g} [\Phi(\bx_k) - \Phi(\bx_{k+1})] \\
    & = \frac{1}{\beta_1 e_3 c_1 \epsilon_g}\sum_{\substack{k = t_i \\k \in \mcK}}^{l_i-1} [\Phi(\bx_k) - \Phi(\bx_{k+1})] \\
    &=  \frac{1}{\beta_1 e_3 c_1 \epsilon_g}[\Phi(\bx_{t_i}) - \Phi(\bx_{l_i})].
    \stepcounter{equation}\tag{\theequation}\label{eq_conv_psi_temp3}
\end{align*}
Since $\Phi$ is bounded below by \assref{ass_regu} and the sequence $\{\Phi(\bx_{k})\}$ is monotone decreasing, then the right hand side of \eqref{eq_conv_psi_temp3} converges to zero. Therefore, 
\begin{align}
    \lim_{i\to\infty} \norm{\bx_{t_i} - \bx_{l_i}} = 0.
\end{align}
However, from \lemref{lem_lip_psi}, we have that $\abs{\Psi_1(\bx_{t_i}) - \Psi_1(\bx_{l_i})} \leq L_{\Psi}\norm{\bx_{t_i} - \bx_{l_i}} \to 0$ as $i\to\infty$. This contradicts our construction of subsequences $\{t_i\}$ and $\{l_i\}$ that $\Psi_1(\bx_{t_i}) - \Psi_1(\bx_{l_i}) \geq \epsilon > 0$. By the principle of contradiction, we conclude that $\lim_{k\to\infty}\Psi_1(\bx_k) = 0$.
\end{proof}

\subsection{Worst-Case Complexity}

Now we study the worst-case complexity of \algref{alg_dfo} following the approach of \cite{cartis2019derivative}. We bound the number of iterations and objective evaluations until $\Psi_1(\bx_{k}) < \epsilon$, where the existence of bounds is guaranteed by \lemref{lem_conv_liminf} for each optimality level $\epsilon$. Let $i_\epsilon$ be the last iteration before $\Psi_1(\bx_{i_\epsilon+1}) < \epsilon$ for the first time. We classify all iterations until iteration $i_\epsilon$ (inclusive) into five types with descriptions and symbols below:
\begin{enumerate}
    \item Criticality Iteration ($\mcCi$): the set of criticality iterations $k \leq i_\epsilon$. Moreover, we collect all iterations in $\mcCi$ where $\Delta_k$ is not reduced into a set $\mcCMi$. That is, $\mcCMi$ is the set of the first iteration of every call of \algref{alg_geo}. We denote the remaining iterations in $\mcCi$ by $\mcCUi \coloneqq \mcCi - \mcCMi$.
    \item Safety Iteration ($\mcSi$): the set of iterations $k \leq i_{\epsilon}$ where the safety phase is called.
    \item Successful Iteration ($\mcS$): the set of successful iterations $k \leq i_{\epsilon}$.
    \item Model Improvement Iteration ($\mcMi$): the set of iterations $k \leq i_{\epsilon}$ where the model improvement phase is called.
    \item Unsuccessful Iteration ($\mcUi$): the set of unsuccessful iterations $k \leq i_{\epsilon}$.
\end{enumerate}
We first count the number of iterations up to iteration $i_\epsilon$ that are successful in the following lemma.

\begin{lemma} \label{lem_count_suc}
Suppose that Assumptions~\ref{ass_smooth}, \ref{ass_regu} and \ref{ass_boundedhess} hold. Then 
\begin{align} \label{eq_count_suc}
    \abs{\mcSi} \leq \frac{\Phi(\bx_0)-\flow-\hlow}{\beta_1 e_3 c_1}\max\left\{\kappa_H\epsilon_g^{-2},\epsilon_g^{-1}\rho_{\min}^{-1}\right\},
\end{align}
where $\epsilon_g$ is defined in \eqref{eq_eta_bound} and $\rho_{\min}$ in \eqref{eq_rho_min}.
\end{lemma}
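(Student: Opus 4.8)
The plan is to derive a per-iteration objective decrease that holds uniformly over all successful iterations $k \le i_\epsilon$, and then telescope over $k \in \mathcal{S}_{i_\epsilon}$ against the total available decrease $\Phi(\bx_0) - (\flow + \hlow)$. First I would fix any successful iteration $k \le i_\epsilon$. By definition of $i_\epsilon$ we have $\Psi_1(\bx_k) \ge \epsilon$ throughout, so \lemref{lem_eta_bound} gives $\overline{\eta}_1(\bx_k) \ge \epsilon_g > 0$, and \assref{ass_boundedhess} gives $\max\{1,\norm{H_k}\} \le \kappa_H$. Since $k$ is successful, $R_k \ge \beta_1$, which combined with the ratio definition \eqref{eq_ratio} and the sufficient-decrease condition \eqref{eq_mk_dec} yields
\begin{align*}
	\Phi(\bx_k) - \Phi(\bx_{k+1}) &\geq \beta_1\bigl[m_k(\bx_k) - m_k(\bx_k+\bs_k)\bigr] \\
	&\geq \beta_1 e_3 c_1\, \overline{\eta}_1(\bx_k)\min\left\{\Delta_k, \frac{\overline{\eta}_1(\bx_k)}{\max\{1,\norm{H_k}\}}\right\} \\
	&\geq \beta_1 e_3 c_1\, \epsilon_g \min\left\{\Delta_k, \frac{\epsilon_g}{\kappa_H}\right\},
\end{align*}
exactly as in the argument already used inside \lemref{lem_conv_delta} and \thmref{thm_convergence}.

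Next I would remove the dependence on the iteration-varying $\Delta_k$. Since $\rho_k \le \Delta_k$ always and \lemref{lem_rho_min} gives $\rho_k \ge \rho_{\min} > 0$ for all $k$ (applicable here because $\Psi_1(\bx_k) \ge \epsilon$ on this range), we have $\Delta_k \ge \rho_{\min}$, hence $\min\{\Delta_k, \epsilon_g/\kappa_H\} \ge \min\{\rho_{\min}, \epsilon_g/\kappa_H\}$. Therefore every successful iteration $k \le i_\epsilon$ satisfies
\begin{align*}
	\Phi(\bx_k) - \Phi(\bx_{k+1}) \geq \beta_1 e_3 c_1\, \epsilon_g \min\left\{\rho_{\min}, \frac{\epsilon_g}{\kappa_H}\right\}.
\end{align*}
I would then sum this inequality over $k \in \mathcal{S}_{i_\epsilon}$. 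Because $\Phi(\bx_{k+1}) = \Phi(\bx_k)$ on all non-successful iterations (the step is rejected), the sum of the per-iteration decreases over successful $k$ is bounded above by the total decrease $\Phi(\bx_0) - \Phi(\bx_{i_\epsilon+1})$, which in turn is at most $\Phi(\bx_0) - \flow - \hlow$ using that $\Phi = f + h \ge \flow + \hlow$ (Assumptions~\ref{ass_smooth} and \ref{ass_regu}). This gives
\begin{align*}
	\abs{\mcSi}\, \beta_1 e_3 c_1\, \epsilon_g \min\left\{\rho_{\min}, \frac{\epsilon_g}{\kappa_H}\right\} \leq \Phi(\bx_0) - \flow - \hlow.
\end{align*}
Rearranging, and rewriting $\epsilon_g^{-1}\max\{\epsilon_g^{-1}\kappa_H,\, \rho_{\min}^{-1}\} = \max\{\kappa_H\epsilon_g^{-2}, \epsilon_g^{-1}\rho_{\min}^{-1}\}$ via $1/\min\{a,b\} = \max\{1/a,1/b\}$, produces exactly \eqref{eq_count_suc}.

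The only genuine subtlety, rather than a hard obstacle, is making sure the hypotheses of \lemref{lem_rho_min} and \lemref{lem_eta_bound} are legitimately available on the whole range $k \le i_\epsilon$: this is guaranteed by the choice of $i_\epsilon$ as the last iteration before $\Psi_1$ first drops below $\epsilon$, so $\Psi_1(\bx_k) \ge \epsilon$ holds for every such $k$. I would also note that $k+1 \le i_\epsilon + 1$ keeps every telescoped term within the monotone regime, so no decrease is double-counted and the bound on the total decrease is valid. Everything else is the routine telescoping and the algebraic identity for $\min/\max$ reciprocals; I expect no step to require more than the lemmas already established above.
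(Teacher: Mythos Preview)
Your proposal is correct and follows essentially the same approach as the paper's proof: derive the per-iteration decrease $\Phi(\bx_k)-\Phi(\bx_{k+1}) \geq \beta_1 e_3 c_1 \epsilon_g \min\{\rho_{\min}, \epsilon_g/\kappa_H\}$ for each $k\in\mcSi$ using \eqref{eq_mk_dec}, \lemref{lem_eta_bound}, \lemref{lem_rho_min} and \assref{ass_boundedhess}, then sum and bound the total decrease by $\Phi(\bx_0)-\flow-\hlow$. Your added remarks about the applicability of \lemref{lem_rho_min} on the range $k\le i_\epsilon$ and the $\min$/$\max$ reciprocal identity are sound and make the argument slightly more explicit than the paper's version.
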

\begin{proof}
For all $k \in \mcSi$, by \lemref{lem_mk_dec} and \assref{ass_boundedhess}, we have that 
\begin{align}
    \Phi(\bx_k)-\Phi(\bx_{k+1}) \geq \beta_1[m_k(\bzero)-m_k(\bs_k)] \geq \beta_1 e_3 c_1 \overline{\eta}_1(\bx_{k})\min\left\{\Delta_k, \frac{\overline{\eta}_1(\bx_{k})}{\kappa_H}\right\}.
\end{align}
From \lemref{lem_eta_bound} and \lemref{lem_rho_min}, $\overline{\eta}_1(\bx_k) \geq \epsilon_g$ and $\Delta_k \geq \rho_{k} \geq \rho_{\min}$ 
for any $k \geq i_\epsilon$. This means
\begin{align} \label{eq_count_suc_temp1}
    \Phi(\bx_k)-\Phi(\bx_{k+1}) \geq \beta_1 e_3 c_1 \epsilon_g \min\left\{\rho_{\min}, \frac{\epsilon_g}{\kappa_H}\right\}.
\end{align}
Summing \eqref{eq_count_suc_temp1} over all $k \in \mcSi$, and noting that $\flow+\hlow \leq \Phi(\bx_k) \leq \Phi(\bx_0)$, we obtain
\begin{align}
    \Phi(\bx_0) - \flow-\hlow \geq \abs{\mcSi}\beta_1 e_3 c_1 \epsilon_g \min\left\{\rho_{\min},\frac{\epsilon_g}{\kappa_H}\right\},
\end{align}
from which \eqref{eq_count_suc} follows.
\end{proof}

\begin{lemma} \label{lem_count_relation}
Suppose that \assref{ass_smooth}, \assref{ass_regu} and \assref{ass_boundedhess} hold. Then we have the bounds
\begin{align}
    \abs{\mcCUi}+\abs{\mcFi}+\abs{\mcUi} &\leq \abs{\mcSi}\cdot\frac{\log\gammaincbar}{\abs{\log\alpha_3}}+\frac{1}{\abs{\log\alpha_3}}\log\left(\frac{\Delta_0^{\textnormal{init}}}{\rho_{\min}}\right), \\
    \abs{\mcCMi} &\leq \abs{\mcFi}+\abs{\mcSi}+\abs{\mcUi}, \\
    \abs{\mcMi} &\leq \abs{\mcCMi}+\abs{\mcCUi}+\abs{\mcFi}+\abs{\mcSi}+\abs{\mcUi},
\end{align}
where $\alpha_3 \coloneqq \max\{\omega_C,\omega_S,\gammadec,\alpha_2\}<1$ and $\rho_{\min}$ is defined in \eqref{eq_rho_min}.
\end{lemma}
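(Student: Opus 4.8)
The three inequalities are of different character, so I would prove them separately, following \cite[Lemmas 3.9--3.11]{cartis2019derivative} but adapting for the inexact measure $\overline{\eta}_1$ and the scaling $\tau_k$. Only the first inequality needs a quantitative argument; the other two are combinatorial. For the first, the plan is a telescoping (multiplicative) argument on the trust-region radius along the whole ``radius path'' for $k=0,\dots,i_\epsilon$: the sequence that starts at $\Delta_0^{\textnormal{init}}$, is multiplied by $\omega_C$ at each inner reduction loop of \algref{alg_geo} (the steps counted by $\mcCUi$), and is multiplied by the relevant outcome factor at each main iteration. I would first record the per-step factors. A successful iteration gives $\Delta_{k+1}^{\textnormal{init}}\leq\gammaincbar\Delta_k$, using $\norm{\bs_k}\leq\Delta_k$ and $\gammainc\leq\gammaincbar$. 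A safety iteration gives $\Delta_{k+1}^{\textnormal{init}}\leq\alpha_3\Delta_k$, since either $\Delta_{k+1}^{\textnormal{init}}=\omega_S\Delta_k$ or the floor is active and the reset gives $\alpha_2\rho_k\leq\alpha_2\Delta_k$, with $\omega_S,\alpha_2\leq\alpha_3$. Each reduction loop in $\mcCUi$ multiplies the radius by $\omega_C\leq\alpha_3$, while iterations in $\mcCMi$ and $\mcMi$ do not increase it. Taking logarithms of the telescoped identity, the increases total at most $\abs{\mcSi}\log\gammaincbar$ and the decreases at least $(\abs{\mcCUi}+\abs{\mcFi}+\abs{\mcUi})\abs{\log\alpha_3}$; combined with $\Delta^{\textnormal{end}}\geq\rho_{\min}$ from \lemref{lem_rho_min} and divided by $\abs{\log\alpha_3}$, this yields the stated inequality.

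The main obstacle is the behaviour of the radius in the iterations with $R_k<\beta_1$ (unsuccessful and model-improvement), because line~\ref{algline_tau_delta1} divides by $\tau_k$, so $\min(\gammadec\Delta_k,\norm{\bs_k})/\tau_k$ is not manifestly $\leq\alpha_3\Delta_k$ (or even $\leq\Delta_k$) when $\tau_k$ is small. For an unsuccessful iteration $Y_k$ is $\Lambda$-poised, hence $p_k$ is fully linear, so the failure of \eqref{eq_suc_safety} in \lemref{lem_suc_safety} bounds $\Delta_k$ below by a fixed multiple of $\overline{\eta}_1(\bx_k)$; combining this with the step bound of \lemref{lem_step_bound} and the floor-reset branch should give $\Delta_{k+1}^{\textnormal{init}}\leq\alpha_3\Delta_k$, exactly as $\tau_k$ is managed in the proof of \lemref{lem_rho_min}. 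I expect this $\tau_k$ bookkeeping to be the only genuinely new ingredient relative to the $\tau_k\equiv1$ analysis of \cite{cartis2019derivative}.

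For the second inequality I would use that every call of \algref{alg_geo} returns $Y_k$ that is $\Lambda$-poised in $B(\bx_k,\Delta_k)$. Thus if the trust-region step at that same iteration has $R_k<\beta_1$, the model-improvement branch (which requires $Y_k$ \emph{not} $\Lambda$-poised) cannot trigger, so the outcome of any criticality-entering iteration is safety, successful, or unsuccessful. Since $\mcCMi$ records exactly one first inner loop per criticality-entering iteration, the map sending each to its distinct main-iteration outcome injects $\mcCMi$ into $\mcFi\cup\mcSi\cup\mcUi$, giving $\abs{\mcCMi}\leq\abs{\mcFi}+\abs{\mcSi}+\abs{\mcUi}$.

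For the third inequality I would show that model-improvement iterations are never consecutive: after one at $k$, the set $Y_{k+1}$ is $\Lambda$-poised in $B(\bx_{k+1},\Delta_{k+1}^{\textnormal{init}})$, so at $k+1$ either the criticality phase is entered (whose outcome is not model improvement, by the previous paragraph) or the model is already fully linear and an $R_{k+1}<\beta_1$ step is unsuccessful rather than model-improving. Moreover $i_\epsilon$ must itself be successful, since only a successful step moves the iterate while $\Psi_1(\bx_{i_\epsilon+1})<\epsilon\leq\Psi_1(\bx_{i_\epsilon})$. Hence the shift $k\mapsto k+1$ injects $\mcMi$ into the non-model-improvement iterations $\mcFi\cup\mcSi\cup\mcUi$, whose cardinality is dominated by the stated right-hand side.
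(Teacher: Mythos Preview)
Your overall plan mirrors the paper's, which simply cites \cite[Lemma 3.14]{cartis2019derivative}; your combinatorial arguments for the second and third inequalities are correct (indeed they give the slightly sharper bounds $\abs{\mcCMi}\leq\abs{\mcFi}+\abs{\mcSi}+\abs{\mcUi}$ and $\abs{\mcMi}\leq\abs{\mcFi}+\abs{\mcSi}+\abs{\mcUi}$, which trivially imply the stated ones).

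The gap is in the first inequality, precisely at the point you flag. You assert that ``iterations in $\mcCMi$ and $\mcMi$ do not increase'' the radius, and later acknowledge the $\tau_k$ division as the obstacle, but you only sketch a remedy for \emph{unsuccessful} iterations and say nothing further about \emph{model-improvement} iterations. That is exactly where the argument breaks: in an $\mcMi$ iteration $Y_k$ is \emph{not} $\Lambda$-poised, so $p_k$ need not be fully linear, $\|\bg_k\|$ is not controlled, and hence $\tau_k=\overline{\eta}_1(\bx_k)/(\|\bg_k\|+L_h)$ can be arbitrarily small. Then $\Delta_{k+1}^{\textnormal{init}}=\min(\gammadec\Delta_k,\|\bs_k\|)/\tau_k$ may exceed $\Delta_k$ by an unbounded factor, and the telescoping you describe collapses. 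Your proposed fix via the contrapositive of \lemref{lem_suc_safety} does not apply here (it needs full linearity), and even for $\mcUi$ it only yields a \emph{lower} bound $\Delta_k>c\,\overline{\eta}_1(\bx_k)$, from which it is not clear how to get $\Delta_{k+1}^{\textnormal{init}}\leq\alpha_3\Delta_k$ without an independent uniform bound on $\|\bg_k\|$ (equivalently, on $1/\tau_k$). The paper's bare citation to \cite{cartis2019derivative}, where $\tau_k\equiv 1$, does not resolve this point either; to make the proof go through you would need either an a priori bound on $\|\bg_k\|$ over all iterations $k\leq i_\epsilon$, or a modified telescoping that explicitly accounts for possible radius increases on $R_k<\beta_1$ steps.
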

\begin{proof}
See the proof of \cite[Lemma 3.14]{cartis2019derivative}.
\end{proof}

Following \cite{cartis2019derivative}, we make an additional assumption below, which can be easily satisfied by appropriate choices of parameters in \algref{alg_dfo}.

\begin{assumption} \label{ass_para}
  The algorithm parameter $\epsilon_C$ satisfies $e_1\epsilon_C=c_3\epsilon$ for some constant $c_3>0$.
\end{assumption}

\begin{theorem} \label{thm_count}
Suppose Assumptions~\ref{ass_smooth}, \ref{ass_regu}, \ref{ass_boundedhess} and \ref{ass_para} hold. 
Then the number of iterations $i_\epsilon$ until $\Psi_1(\bx_{i_\epsilon+1})<\epsilon$ is at most
\begin{align} \label{eq_count}
    &\left\lfloor\frac{4(\Phi(\bx_0)-\flow-\hlow)}{\beta_1 e_3 c_1}\left(1+\frac{\log\gammaincbar}{|\log\alpha_3|}\right)\max\left\{\frac{\kappa_H}{c_4^2\epsilon^2}, \frac{1}{c_4 c_5 \epsilon^2},\frac{1}{c_4 \Delta_0^{\textnormal{init}} \epsilon}\right\} \right.\\
    &\left.\quad +\frac{4}{|\log\alpha_3|}\max\left\{0,\log\left(\Delta_0^{\textnormal{init}}c_5^{-1}\epsilon^{-1}\right)\right\}\right\rfloor,
\end{align}
where $c_4 \coloneqq \min\{e_1 c_3, 1/(1+(\kappaeg+e_2)\mu)\}$ and 
\begin{align}
    c_5 \coloneqq \min\left\{\frac{\omega_C}{\kappaeg+1/\mu+e_2},\frac{c_2 c_4\gammadec\alpha_1}{2\kappa_H},\frac{c_2 \gammadec\alpha_1}{2}\left(\kappaeg+\frac{\kappaef}{e_3c_1(1-\beta_2)}+e_2\right)^{-1}\right\}.
\end{align}
\end{theorem}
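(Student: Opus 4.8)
The plan is to count the total number of iterations by partitioning the index set $\{0,1,\dots,i_\epsilon\}$ into the six disjoint classes $\mcCMi,\mcCUi,\mcFi,\mcSi,\mcMi,\mcUi$ introduced above, bounding every class in terms of the number of successful iterations $\abs{\mcSi}$ (plus a single logarithmic term), and then invoking \lemref{lem_count_suc} together with \assref{ass_para} to convert the resulting expression into an explicit function of $\epsilon$. Conceptually there is nothing deep here: the two preceding lemmas do all the heavy lifting, and the theorem is assembled by combining them.

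First I would write the total count as $i_\epsilon+1 = \abs{\mcCMi}+\abs{\mcCUi}+\abs{\mcFi}+\abs{\mcSi}+\abs{\mcMi}+\abs{\mcUi}$ and abbreviate $A \coloneqq \abs{\mcCUi}+\abs{\mcFi}+\abs{\mcUi}$. The three inequalities of \lemref{lem_count_relation} then give, in turn, $A \leq \abs{\mcSi}\,\log\gammaincbar/\abs{\log\alpha_3} + \log(\Delta_0^{\textnormal{init}}/\rho_{\min})/\abs{\log\alpha_3}$, next $\abs{\mcCMi} \leq A + \abs{\mcSi}$ (using $\abs{\mcFi}+\abs{\mcUi}\leq A$), and finally $\abs{\mcMi} \leq \abs{\mcCMi}+A+\abs{\mcSi} \leq 2A+2\abs{\mcSi}$. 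Substituting these back into the total and collecting terms produces the clean bound $i_\epsilon+1 \leq 4(A+\abs{\mcSi})$, that is,
\begin{align*}
    i_\epsilon + 1 \leq 4\abs{\mcSi}\left(1 + \frac{\log\gammaincbar}{\abs{\log\alpha_3}}\right) + \frac{4}{\abs{\log\alpha_3}}\log\left(\frac{\Delta_0^{\textnormal{init}}}{\rho_{\min}}\right).
\end{align*}

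The remaining work is purely to make $\abs{\mcSi}$, $\epsilon_g$ and $\rho_{\min}$ explicit in $\epsilon$. I would insert the bound on $\abs{\mcSi}$ from \lemref{lem_count_suc}, then use \assref{ass_para} so that the definition \eqref{eq_eta_bound} collapses to $\epsilon_g = c_4\epsilon$ with $c_4 = \min\{e_1 c_3, 1/(1+(\kappaeg+e_2)\mu)\}$; substituting this into \eqref{eq_rho_min} yields $\rho_{\min} = \min\{\Delta_0^{\textnormal{init}}, c_5\epsilon\}$ with exactly the stated $c_5$. Applying $\min\{a,b\}^{-1}=\max\{a^{-1},b^{-1}\}$ for positive $a,b$, the factor $\max\{\kappa_H\epsilon_g^{-2}, \epsilon_g^{-1}\rho_{\min}^{-1}\}$ rewrites as the three-way maximum $\max\{\kappa_H/(c_4^2\epsilon^2),\, 1/(c_4 c_5\epsilon^2),\, 1/(c_4\Delta_0^{\textnormal{init}}\epsilon)\}$ appearing in \eqref{eq_count}, and likewise $\log(\Delta_0^{\textnormal{init}}/\rho_{\min}) = \max\{0,\log(\Delta_0^{\textnormal{init}}c_5^{-1}\epsilon^{-1})\}$. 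Taking the floor (since $i_\epsilon$ is an integer and $i_\epsilon \leq (i_\epsilon+1)-1$) then delivers \eqref{eq_count}.

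The one place demanding care is the combinatorial accounting that yields the constant factor $4$: because each of \lemref{lem_count_relation}'s inequalities bounds a class in terms of classes that are themselves only bounded through $\abs{\mcSi}$, I must chain the substitutions in the right order and verify no class is double-counted, so that the coefficient multiplying $\abs{\mcSi}$ is genuinely $4(1+\log\gammaincbar/\abs{\log\alpha_3})$ and the logarithmic remainder carries the matching factor $4/\abs{\log\alpha_3}$. The only other subtlety is the final algebraic matching, namely confirming that $\epsilon_g = c_4\epsilon$ is consistent with $c_4$ via \assref{ass_para} and that the three candidate terms of $\rho_{\min}$ align one-to-one with the three entries of $c_5$; both are routine verifications rather than genuine obstacles.
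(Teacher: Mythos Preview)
Your proposal is correct and follows essentially the same approach as the paper: both arguments partition the iterations, chain the three inequalities of \lemref{lem_count_relation} to obtain the bound $i_\epsilon \leq 4\abs{\mcSi}\bigl(1+\log\gammaincbar/\abs{\log\alpha_3}\bigr) + (4/\abs{\log\alpha_3})\log(\Delta_0^{\textnormal{init}}/\rho_{\min})$, and then substitute \lemref{lem_count_suc} together with $\epsilon_g=c_4\epsilon$ and $\rho_{\min}=\min\{\Delta_0^{\textnormal{init}},c_5\epsilon\}$ via \assref{ass_para}. Your chaining via the abbreviation $A$ is a minor cosmetic variant of the paper's line-by-line substitution, but the structure and constants are identical.
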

\begin{proof}
From \assref{ass_para} and the definition of $\epsilon_g$ in \eqref{eq_eta_bound}, we have $\epsilon_g = c_4 \epsilon$. Then $\rho_{\min}$ defined in \eqref{eq_rho_min} can be rewritten as $\rho_{\min} = \min\left\{\Delta_{0}^{\textnormal{init}}, c_5\epsilon\right\}$. Using \lemref{lem_count_relation}, the total number of iterations can be bounded by
\begin{align}
    i_{\epsilon} &= \abs{\mcCMi} +  \abs{\mcCUi} + \abs{\mcFi} + \abs{\mcSi} + \abs{\mcMi} + \abs{\mcUi} \\
    &\leq 2\left(\abs{\mcCMi} +  \abs{\mcCUi} + \abs{\mcFi} + \abs{\mcSi} + \abs{\mcUi}\right) \\
    &\leq 2\abs{\mcCUi} + 4\left(\abs{\mcFi} + \abs{\mcSi} + \abs{\mcUi}\right) \\
    &\leq 4\left(\abs{\mcCUi} + \abs{\mcFi} + \abs{\mcUi}\right) + 4\abs{\mcSi} \\
    &\leq 4\abs{\mcSi}\left(1+\frac{\log\gammaincbar}{\abs{\log\alpha_3}}\right)+\frac{4}{\abs{\log\alpha_3}}\log\left(\frac{\Delta_0^{\textnormal{init}}}{\rho_{\min}}\right).
\end{align}
Therefore, \eqref{eq_count} follows from this and \lemref{lem_count_suc}.
\end{proof}

\begin{corollary} \label{cor_complexity}
Suppose Assumptions~\ref{ass_smooth}, \ref{ass_regu}, \ref{ass_boundedhess} and \ref{ass_para} hold. For $\epsilon \in (0,1]$, the number of iterations $i_\epsilon$  until $\Psi_1(\boldsymbol{x}_{i_\epsilon+1})<\epsilon$ for the first time is at most $\bigO(\kappa_H\kappa_d^{2}\epsilon^{-2})$, where $\kappa_d:=\max(\kappaef,\kappaeg)$. 
\end{corollary}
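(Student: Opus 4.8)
The plan is to derive the $\bigO$ statement directly from the explicit iteration bound \eqref{eq_count} of \thmref{thm_count}, keeping track only of the dependence on $\epsilon$, $\kappa_H$ and $\kappa_d=\max(\kappaef,\kappaeg)$ while treating every other algorithm parameter ($\beta_1,\beta_2,e_1,e_2,e_3,c_1,c_2,c_3,\mu,\omega_C,\alpha_1,\alpha_3,\gammadec,\gammaincbar,\Delta_0^{\textnormal{init}}$, and $\Phi(\bx_0)-\flow-\hlow$) as an $\bigO(1)$ constant. Invoking \assref{ass_para} exactly as in the proof of \thmref{thm_count}, I would write $\epsilon_g=c_4\epsilon$ and $\rho_{\min}=\min\{\Delta_0^{\textnormal{init}},c_5\epsilon\}$, so that \eqref{eq_count} is the sum of the two displayed terms, with the leading prefactor $\tfrac{4(\Phi(\bx_0)-\flow-\hlow)}{\beta_1 e_3 c_1}(1+\log\gammaincbar/|\log\alpha_3|)=\bigO(1)$.

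Next I would discard the lower-order contributions using $\epsilon\in(0,1]$. Inside the first maximum, the term $1/(c_4\Delta_0^{\textnormal{init}}\epsilon)$ scales like $\epsilon^{-1}$ and is therefore dominated by the two $\epsilon^{-2}$ terms whenever $\epsilon\le1$; similarly the second summand is $\bigO(\log(\epsilon^{-1}))$, again dominated by $\epsilon^{-2}$. This reduces the bound to $i_\epsilon=\bigO\left(\max\{\kappa_H c_4^{-2},\,c_4^{-1}c_5^{-1}\}\,\epsilon^{-2}\right)$, so it only remains to control $c_4^{-1}$ and $c_5^{-1}$.

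The remaining (routine but careful) bookkeeping is the constant-tracking. From $c_4=\min\{e_1c_3,(1+(\kappaeg+e_2)\mu)^{-1}\}$ one has $c_4^{-1}=\bigO(\kappaeg)=\bigO(\kappa_d)$, whence $\kappa_H c_4^{-2}=\bigO(\kappa_H\kappa_d^2)$. For $c_5$ I would bound the reciprocal of each of its three constituent terms: the first and third give $\bigO(\kappaeg+\kappaef)=\bigO(\kappa_d)$, while the middle term $c_2c_4\gammadec\alpha_1/(2\kappa_H)$ gives $\bigO(\kappa_H c_4^{-1})=\bigO(\kappa_H\kappa_d)$; taking the maximum yields $c_5^{-1}=\bigO(\kappa_H\kappa_d)$ and hence $c_4^{-1}c_5^{-1}=\bigO(\kappa_H\kappa_d^2)$. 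Both entries of the maximum are thus $\bigO(\kappa_H\kappa_d^2)$, giving $i_\epsilon=\bigO(\kappa_H\kappa_d^2\epsilon^{-2})$. The one point I would be most careful about is verifying that the $\epsilon^{-1}$ and logarithmic pieces are genuinely dominated over the entire range $\epsilon\in(0,1]$ (and that the $\max\{0,\cdot\}$ guard on the logarithm causes no problem), since this is the only place the hypothesis $\epsilon\le1$ is essential; everything else is direct substitution into \eqref{eq_count}.
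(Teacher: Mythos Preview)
Your proposal is correct and follows essentially the same route as the paper: both arguments reduce \eqref{eq_count} to its leading $\epsilon^{-2}$ terms, then verify $c_4^{-1}=\bigO(\kappa_d)$ and $c_5^{-1}=\bigO(\kappa_H\kappa_d)$ by inspecting the three constituents of $c_5$, yielding $\max\{\kappa_H c_4^{-2},\,c_4^{-1}c_5^{-1}\}=\bigO(\kappa_H\kappa_d^2)$. Your write-up is simply more explicit than the paper's about why the $\epsilon^{-1}$ and logarithmic pieces are subdominant on $(0,1]$; the paper dispatches this with the phrase ``to leading order.''
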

\begin{proof}
We note that $c_4^{-1} = \bigO(\kappaeg)=\bigO(\kappa_d)$ from \thmref{thm_count}. 
\begin{align}
    c_5^{-1} = \bigO\left(\max\left\{\kappaeg,\kappa_H c_4^{-1},\kappaef+\kappaeg\right\}\right) = \bigO(\kappa_H\kappa_d).
\end{align}
To leading order in \eqref{eq_count}, the number of iterations is
\begin{align}
    \bigO\left(\max\{\kappa_H c_4^{-2}, c_4^{-1} c_5^{-1}\}\epsilon^{-2}\right) = \bigO\left(\kappa_H \kappa_d^2\epsilon^{-2}\right),
\end{align}
as required.
\end{proof}

\section{Adaptation to Regularized Nonlinear Least-Squares} \label{sec_nlls}

We now consider the specific case of regularized nonlinear least-squares problems, where \eqref{eq_prob} becomes
\begin{align}
	\min_{\bx\in\R^n} \Phi(\bx) := f(\bx) + h(\bx) = \frac{1}{2}\|\br(\bx)\|^2 + h(\bx), \label{eq_prob_nlls}
\end{align}
for some function $\br:\R^n\to\R^m$.
As above, we assume that $\br$ is continuously differentiable with Jacobian $[J(\bx)]_{i,j} = \frac{\partial r_i(\bx)}{\partial x_{j}}$ but these derivatives are not accessible.

\begin{assumption} \label{ass_smooth_nlls}
	$\br(\bx)$ is continuously differentiable and its Jacobian is Lipschitz continuous with Lipschitz constant $L_{J}$ in the convex hull of  $\mathcal{B} \coloneqq \cup_k B(\boldsymbol{x}_k,\Delta_{\max})$. 
	Furthermore, $\br(\bx)$ and $J(\bx)$ are uniformly bounded in $\mathcal{B}$, i.e.~$\norm{\br(\bx)} \leq \rmax$ and $\norm{J(\bx)} \leq \jmax$ for all $\bx \in \mathcal{B}$. 
\end{assumption}

Note that \assref{ass_smooth_nlls} (taken from \cite[Assumption 3.1]{cartis2019derivative}) automatically gives us the smoothness requirements from \secref{sec_convergence}.

\begin{lemma}[Lemma 3.2, \cite{cartis2019derivative}] \label{lem_lip_gradf}
If \assref{ass_smooth_nlls} holds, then \assref{ass_smooth} holds with
\begin{align} \label{eq_lip_gradf}
    L_{\grad f} \coloneqq \rmax L_{J}+\jmax^{2}.
\end{align}
and $\flow=0$.
\end{lemma}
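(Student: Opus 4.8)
The plan is to verify the two claims of \lemref{lem_lip_gradf} directly from the definitions of $f$ and its gradient, using the structural assumptions in \assref{ass_smooth_nlls}. Since $f(\bx) = \frac{1}{2}\norm{\br(\bx)}^2$, the chain rule gives $\grad f(\bx) = J(\bx)^T \br(\bx)$, so $f$ is continuously differentiable whenever $\br$ is, as assumed. The claim $\flow = 0$ is immediate: $f(\bx) = \frac{1}{2}\norm{\br(\bx)}^2 \geq 0$ for all $\bx$, so $f$ is bounded below by $0$ as required by \assref{ass_smooth}.

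The substantive part is establishing that $\grad f$ is Lipschitz on $\mathcal{B}$ with the stated constant. First I would take any $\bx, \by$ in the convex hull of $\mathcal{B}$ and write
\begin{align*}
    \grad f(\bx) - \grad f(\by) = J(\bx)^T \br(\bx) - J(\by)^T \br(\by).
\end{align*}
The standard trick is to add and subtract a cross term, splitting this as
\begin{align*}
    \grad f(\bx) - \grad f(\by) = J(\bx)^T\bigl(\br(\bx) - \br(\by)\bigr) + \bigl(J(\bx) - J(\by)\bigr)^T \br(\by).
\end{align*}
Applying the triangle inequality and submultiplicativity of the operator norm, I would bound the first term by $\norm{J(\bx)} \cdot \norm{\br(\bx) - \br(\by)}$ and the second by $\norm{J(\bx) - J(\by)} \cdot \norm{\br(\by)}$. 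The uniform bounds from \assref{ass_smooth_nlls} give $\norm{J(\bx)} \leq \jmax$ and $\norm{\br(\by)} \leq \rmax$, while Lipschitz continuity of $J$ gives $\norm{J(\bx) - J(\by)} \leq L_J \norm{\bx - \by}$.

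The one step requiring a small auxiliary argument is bounding $\norm{\br(\bx) - \br(\by)}$ by $\jmax \norm{\bx - \by}$: this follows because $\br$ has Jacobian bounded by $\jmax$ throughout the convex hull of $\mathcal{B}$, so writing $\br(\bx) - \br(\by) = \int_0^1 J(\by + t(\bx-\by))(\bx - \by)\,dt$ along the segment (which lies in the convex hull) and taking norms yields the Lipschitz estimate for $\br$ with constant $\jmax$. Combining the two bounds then gives
\begin{align*}
    \norm{\grad f(\bx) - \grad f(\by)} \leq \jmax^2 \norm{\bx-\by} + L_J \rmax \norm{\bx-\by} = (\rmax L_J + \jmax^2)\norm{\bx-\by},
\end{align*}
which is exactly $L_{\grad f} = \rmax L_J + \jmax^2$ as claimed. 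I do not expect any genuine obstacle here; the result is a routine consequence of the product rule for the gradient together with the uniform and Lipschitz bounds, and the only point deserving care is ensuring the integral-along-the-segment argument stays inside the convex hull of $\mathcal{B}$ where the assumptions hold, which is guaranteed by convexity.
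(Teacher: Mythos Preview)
Your argument is correct and is exactly the standard one: split the difference $J(\bx)^T\br(\bx)-J(\by)^T\br(\by)$ with a cross term, bound each piece via the uniform and Lipschitz estimates in \assref{ass_smooth_nlls}, and use the integral (mean-value) representation along the segment to get $\norm{\br(\bx)-\br(\by)}\leq\jmax\norm{\bx-\by}$. The paper itself does not supply a proof here---it simply cites \cite[Lemma~3.2]{cartis2019derivative}---so there is nothing further to compare against; your write-up is precisely the argument one would expect in that reference.
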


We are now in a position to precisely specify how we construct the model $p_k$ \eqref{eq_pk} to be used in $m_k$ \eqref{eq_model}.
Our construction is based on a linear interpolating model for $\br$, a derivative-free analog of the Gauss-Newton method \cite{cartis2019derivative}.
At $k$-th iteration, the linear Taylor series of the residual $\br(\bx)$ around $\bx_{k}$ is given by
\begin{align}
    \br(\bx_{k}+\bs) \approx \br(\bx_{k}) + J(\bx_{k})\bs, 
\end{align}
Since $J(\bx)$ is not accessible, we maintain a set of $n+1$ interpolation points containing $\bx_{k}$, denoted by $Y_k \coloneqq\{\by_0, \cdots, \by_n\}$ with $\by_0 \coloneqq \bx_k$, and approximate $J(\bx_{k})$ by a matrix $J_k \in \R^{m \times n}$. 
That is, we define $\bem_{k}(\bx_k+\bs) \coloneqq \br(\bx_{k}) + J_k\bs$, where $J_k$ satisfies the following interpolation conditions
\begin{align} \label{eq_interpolation_nlls}
    \br(\by_t) = \bem_k(\by_t), \qquad \forall t = 1,\ldots, n.
\end{align}
The uniqueness of $J_k$ is guaranteed once the interpolation directions $\{\by_1 - \bx_k, \cdots, \by_n - \bx_k\}$ are linearly independent, and we say that $Y_k$ is \emph{poised} for linear interpolation \cite[Section 2.3]{conn2009introduction}. 
Our resulting quadratic model $p_k$ for $f$ is
\begin{align} \label{eq_pk_nlls}
    f(\bx_{k}+\bs) \approx p_k(\bx_{k}+\bs) &\coloneqq \frac{1}{2}\norm{\bem_{k}(\bx_{k}+\bs)}^2,
\end{align}
and so $\bg_k \coloneqq J_k^{T}\br(\bx_k)$ and $H_k \coloneqq J_k^{T}J_k$ in \eqref{eq_pk}. 
Therefore, our full model $m_k$ \eqref{eq_model} for $\Phi$ is
\begin{align} \label{eq_model_nlls}
    \Phi(\bx_k + \bs) \approx m_k(\bx_{k}+\bs) \coloneqq p_k(\bx_{k}+\bs) + h(\bx_k + \bs).
\end{align}

In this setting, we now have a specific version of \lemref{lem_kappa_generic} for our model construction \eqref{eq_pk_nlls}.

\begin{lemma}[Lemma 3.3, \cite{cartis2019derivative}] \label{lem_kappa}
Suppose \assref{ass_smooth_nlls} holds and $Y_k$ is $\Lambda$-poised in $B(\bx_k,\Delta_k)$. 
Then $p_k$ \eqref{eq_pk_nlls} is a fully linear model for $f$ in $B(\bx_k,\Delta_k)$ with
\begin{align}
    \kappaef &= \kappaeg+\frac{L_{\grad f}+(\kappaeg^{r}\Delta_{\max}+\jmax)^2}{2}
    \quad \text{and} \\
    \kappaeg &= L_{\nabla f}+\kappaeg^{r}\rmax+(\kappaeg^{r}\Delta_{\max}+\jmax)^2,
\end{align}
where $\kappaeg^{r}:=\frac{1}{2}L_{J}(\sqrt{n}C+2)$ and $C=\bigO(\sqrt{n} \Lambda)$.
We also have the bound
\begin{align}
    \norm{H_k} &\leq (\jmax^{k})^{2}=(\kappaeg^{r}\Delta_{\max}+\jmax)^{2}. \label{eq_nlls_hess_bdd}
\end{align}
\end{lemma}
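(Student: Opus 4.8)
The plan is to reduce the whole statement to a single interpolation estimate for the vector residual and then propagate it through the squared-norm structure $f=\tfrac12\norm{\br}^2$, $p_k=\tfrac12\norm{\bem_k}^2$. The one genuinely nontrivial step is to show that the linear interpolating model $\bem_k(\bx_k+\bs)=\br(\bx_k)+J_k\bs$ from \eqref{eq_interpolation_nlls} satisfies the Jacobian bound $\norm{J(\by)-J_k}\le\kappaeg^{r}\Delta_k$ on $B(\bx_k,\Delta_k)$ with $\kappaeg^{r}=\tfrac12 L_J(\sqrt n\,C+2)$. This is the only place $\Lambda$-poisedness is used: rewriting the interpolation conditions in the scaled directions $(\by_t-\bx_k)/\Delta_k$ gives a linear system whose inverse has norm bounded by some $C=\bigO(\sqrt n\,\Lambda)$ (this is exactly where poisedness enters), and combining this with the componentwise Taylor remainder of each $r_i$ (controlled by $L_J$) yields $\kappaeg^{r}$, following the linear-interpolation error analysis of \cite[Chapters 2--3]{conn2009introduction}. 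I expect this geometric estimate to be the main obstacle; the remaining steps are elementary algebra using the uniform bounds $\rmax,\jmax$ from \assref{ass_smooth_nlls}.

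Given the Jacobian bound, the Hessian estimate \eqref{eq_nlls_hess_bdd} is immediate: since $H_k=J_k^{T}J_k$ we have $\norm{H_k}=\norm{J_k}^2$, and evaluating the bound at $\by=\bx_k$ gives $\norm{J_k}\le\norm{J(\bx_k)}+\norm{J(\bx_k)-J_k}\le\jmax+\kappaeg^{r}\Delta_{\max}$ (using $\Delta_k\le\Delta_{\max}$), so $\norm{H_k}\le(\kappaeg^{r}\Delta_{\max}+\jmax)^2$. For the gradient condition \eqref{eq_fl_g} I would use $\grad f(\by)=J(\by)^{T}\br(\by)$ and $\grad p_k(\by)=J_k^{T}\bem_k(\by)=J_k^{T}\br(\bx_k)+H_k(\by-\bx_k)$ together with the telescoping identity
\begin{align*}
\grad f(\by)-\grad p_k(\by) &= \bigl(\grad f(\by)-\grad f(\bx_k)\bigr)+\bigl(J(\bx_k)-J_k\bigr)^{T}\br(\bx_k)-H_k(\by-\bx_k),
\end{align*}
which is valid because $\bem_k(\bx_k)=\br(\bx_k)$. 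The three terms are bounded by $L_{\grad f}\Delta_k$ (Lipschitz continuity of $\grad f$ from \lemref{lem_lip_gradf}), by $\kappaeg^{r}\rmax\Delta_k$ (the Jacobian bound with $\norm{\br(\bx_k)}\le\rmax$), and by $(\kappaeg^{r}\Delta_{\max}+\jmax)^2\Delta_k$ (the Hessian bound), and summing gives exactly $\kappaeg=L_{\grad f}+\kappaeg^{r}\rmax+(\kappaeg^{r}\Delta_{\max}+\jmax)^2$.

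Finally, for the function-value condition \eqref{eq_fl_f} I would compare $f$ and $p_k$ about $\by$ through the exact identity
\begin{align*}
f(\by)-p_k(\by) &= \bigl(f(\by)-f(\bx_k)-\grad f(\by)^{T}\bs\bigr)+\bigl(\grad f(\by)-\grad p_k(\by)\bigr)^{T}\bs+\tfrac12\bs^{T}H_k\bs,
\end{align*}
with $\bs=\by-\bx_k$, which follows from $f(\bx_k)=p_k(\bx_k)$ and the fact that $p_k$ is exactly quadratic. The first term is a Taylor remainder bounded by $\tfrac12 L_{\grad f}\Delta_k^2$, the second is at most $\kappaeg\Delta_k^2$ by the gradient estimate just proved, and the third is at most $\tfrac12(\kappaeg^{r}\Delta_{\max}+\jmax)^2\Delta_k^2$ by the Hessian bound; adding them yields precisely $\kappaef=\kappaeg+\tfrac12\bigl(L_{\grad f}+(\kappaeg^{r}\Delta_{\max}+\jmax)^2\bigr)$. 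Since all constants are independent of $\by$, $\bx_k$ and $\Delta_k$, this establishes that $p_k$ is fully linear with the claimed constants.
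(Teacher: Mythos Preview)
The paper does not actually prove this lemma; it is stated as \cite[Lemma 3.3]{cartis2019derivative} and quoted without proof, so there is no in-paper argument to compare against. Your proposal is correct and is essentially the standard proof from the cited reference: invoke the linear-interpolation Jacobian error bound $\norm{J(\by)-J_k}\le\kappaeg^{r}\Delta_k$ from \cite[Chapters 2--3]{conn2009introduction} (which is where $\Lambda$-poisedness and the constant $C=\bigO(\sqrt{n}\Lambda)$ enter), deduce the bound on $\norm{J_k}$ and hence on $\norm{H_k}=\norm{J_k}^2$, and then propagate through the Gauss--Newton structure $\grad f=J^{T}\br$, $\grad p_k=J_k^{T}\bem_k$ via the two telescoping identities you wrote down. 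Each of your three-term decompositions checks out and produces exactly the stated constants, so nothing further is needed.
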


We are now in a position to report the convergence and complexity results for \algref{alg_dfo} applied to \eqref{eq_prob_nlls}.

\begin{corollary} \label{cor_complexity_nlls}
	Suppose Assumptions~\ref{ass_smooth_nlls}, \ref{ass_regu}, \ref{ass_boundedhess} and \ref{ass_para} hold, and we run \algref{alg_dfo} with the model $m_k$ from \eqref{eq_model_nlls}.
	Then $\lim_{k\to\infty} \Psi_1(\bx_k) = 0$.
	For $\epsilon \in (0,1]$, the number of iterations $i_\epsilon$  until $\Psi_1(\boldsymbol{x}_{i_\epsilon+1})<\epsilon$ for the first time is at most $\bigO(\kappa_H\kappa_d^{2}\epsilon^{-2})$, where $\kappa_d = \bigO(n^2 L_J^2 \Lambda^2)$, and the number of evaluations of $\br$ until $i_\epsilon$ is at most $\bigO(\kappa_H\kappa_d^{2}n\epsilon^{-2})$.
\end{corollary}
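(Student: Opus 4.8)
The plan is to treat \corref{cor_complexity_nlls} as a direct specialization of the general theory already developed for \algref{alg_dfo}: the only difference in the nonlinear least-squares setting is the concrete model $p_k$ in \eqref{eq_pk_nlls} together with the explicit form of its full-linearity constants. First I would verify the hypotheses of \thmref{thm_convergence} and \corref{cor_complexity}. By \lemref{lem_lip_gradf}, \assref{ass_smooth_nlls} implies \assref{ass_smooth} with $L_{\grad f} = \rmax L_J + \jmax^2$ and $\flow = 0$, while \assref{ass_regu}, \assref{ass_boundedhess} and \assref{ass_para} are assumed outright. The generic full-linearity requirement \lemref{lem_kappa_generic} is supplied for the construction \eqref{eq_pk_nlls} by \lemref{lem_kappa}, which also provides explicit $\kappaef$ and $\kappaeg$ and the Hessian bound \eqref{eq_nlls_hess_bdd}. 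With all hypotheses in hand, the convergence claim $\lim_{k\to\infty}\Psi_1(\bx_k)=0$ is exactly \thmref{thm_convergence}, and the iteration bound $\bigO(\kappa_H\kappa_d^2\epsilon^{-2})$ is exactly \corref{cor_complexity}.

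The substantive (if routine) computation is tracking the order of $\kappa_d = \max(\kappaef,\kappaeg)$ in $n$, $L_J$ and $\Lambda$. Starting from $C = \bigO(\sqrt{n}\Lambda)$ in \lemref{lem_kappa}, I would obtain $\kappaeg^{r} = \frac{1}{2}L_J(\sqrt{n}C+2) = \bigO(n L_J \Lambda)$. Treating $\Delta_{\max}$ and $\jmax$ as fixed constants, the dominant term in both $\kappaef$ and $\kappaeg$ is $(\kappaeg^{r}\Delta_{\max}+\jmax)^2 = \bigO(n^2 L_J^2 \Lambda^2)$, so that $\kappa_d = \bigO(n^2 L_J^2\Lambda^2)$. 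Inserting this into \corref{cor_complexity} gives the claimed iteration complexity.

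To convert iterations into evaluations of $\br$, I would bound the number of residual evaluations incurred per iteration. Since $Y_k$ has size $n+1$, the only calls to $\br$ in a given iteration are the single evaluation $\br(\bx_k+\bs_k)$ used to form $R_k$ and the re-poising of $Y_k$ during the criticality, safety, or model-improvement phases, which modifies at most $\bigO(n)$ points. Hence each iteration costs at most $\bigO(n)$ evaluations of $\br$, and multiplying by the iteration bound yields the evaluation complexity $\bigO(\kappa_H\kappa_d^2 n\epsilon^{-2})$.

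There is no deep obstacle here beyond careful bookkeeping, as the corollary only assembles results proved earlier. The one point worth checking is that the $\bigO(n)$ per-iteration evaluation bound holds uniformly across all phases --- in particular that the inner loops of the criticality phase (\algref{alg_geo}) do not inflate it. This is automatic, because the iteration count $i_\epsilon$ in \thmref{thm_count} already enumerates those inner loops separately (as the sets $\mcCMi$ and $\mcCUi$), so each counted iteration re-poises $Y_k$ at most once and therefore contributes at most $\bigO(n)$ residual evaluations.
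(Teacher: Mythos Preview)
Your proposal is correct and follows essentially the same route as the paper's proof: invoke \lemref{lem_lip_gradf} to get \assref{ass_smooth}, replace \lemref{lem_kappa_generic} by \lemref{lem_kappa}, apply \thmref{thm_convergence} and \corref{cor_complexity}, then read off $\kappa_d=\bigO(n^2 L_J^2\Lambda^2)$ from the explicit constants and multiply by $\bigO(n)$ evaluations per iteration. Your treatment is in fact more detailed than the paper's---in particular your remark that the inner loops of \algref{alg_geo} are already counted separately in $\mcCMi$ and $\mcCUi$, so the $\bigO(n)$ per-iteration bound is uniform, is a nice clarification the paper leaves implicit.
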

\begin{proof}
	\lemref{lem_lip_gradf} shows \assref{ass_smooth} holds. Otherwise, we replace \lemref{lem_kappa_generic} with \lemref{lem_kappa} and apply \thmref{thm_convergence} and \corref{cor_complexity}.
	Our interpolation set contains $n+1$ points, so we require no more than $n+1$ objective evaluations for each iteration. Thus, the total number of objective evaluations up to iteration $i_\epsilon$ is $\bigO(n i_{\epsilon})$, as expected. 
	The result $\kappa_d = \max(\kappaef,\kappaeg) = \bigO(n^2 L_J^2 \Lambda^2)$ follows from \lemref{lem_kappa}.
\end{proof}

The bound \eqref{eq_nlls_hess_bdd} suggests that $\kappa_H = \bigO(\kappa_d)$ is a reasonable estimate (even though it only holds in iterations where $p_k$ is fully linear).
This would give a final complexity bound of $\bigO(n^6 \epsilon^{-2})$ iterations or $\bigO(n^7 \epsilon^{-2})$ evaluations of $\br$.

\begin{remark}
	Regarding the constant $C$ in \lemref{lem_kappa}, we point the reader to \cite[Remark 3.20]{cartis2019derivative} for a detailed discussion of the dependency of $C$ on $n$. In particular, the results from \cite{garmanjani2016trust} effectively assume $C=\bigO(\Lambda)$ and so report an improved dependency on $n$ in their final complexity bound.
	In the case of \corref{cor_complexity_nlls}, we would instead get $\kappa_d=\bigO(n)$ and a final complexity bound of $\bigO(n^3 \epsilon^{-2})$ iterations or $\bigO(n^4 \epsilon^{-2})$ evaluations of $\br$.
\end{remark}

\paragraph{Subproblem solutions}
Although \algref{alg_dfo} is for the general problem \eqref{eq_prob}, the trust-region subproblem \eqref{eq_trsub} (where we require a solution with accuracy \eqref{eq_mk_dec}) is now minimizing the sum of a nonconvex quadratic function and nonsmooth convex regularizer subject to a ball constraint.
However, this problem becomes convex in the nonlinear least-squares case, where $m_k$ \eqref{eq_model_nlls} is convex (since $H_k = J_k^T J_k$ is positive semidefinite).

We also have the convex subproblem of computing the criticality measure \eqref{eq_min_criticality_alg1} to accuracy \eqref{eq_algdfo_eta_error} (or \eqref{eq_min_criticality_alg2} to accuracy \eqref{eq_alggeo_eta_error} in the case of \algref{alg_geo}).
This is also convex, requiring the minimization of a linear function plus a nonsmooth convex regularizer subject to a ball constraint.

We will discuss how both (convex) subproblems can be solved to the desired accuracy in \secref{sec_sfista}.

\section{Smoothing-Based Algorithm} \label{sec_alg_smoothing}

We now introduce our second approach for solving \eqref{eq_prob}, based on the smoothing technique from \cite{garmanjani2016trust}.
Here, we introduce a smoothing parameter $\gamma>0$ and build an approximation $\Phi^{\gamma}(\bx) \approx \Phi(\bx)$ with $\Phi^{\gamma}$ smooth and $\Phi^{\gamma}\to \Phi$ as $\gamma\to 0$.
For decreasing values of $\gamma$, we approximately minimize $\Phi^{\gamma}$ using a DFO method suitable for smooth objectives.
This algorithm is summarized in Algorithm~\ref{alg_dfo_smoothing}.

\begin{algorithm}[tbh]
\scriptsize
\begin{algorithmic}[1]
\Require Starting point $\bx_0\in\operatorname{dom} h$, initial smoothing parameter $\gamma_0>0$.
\vspace{0.2em}
\Statex \underline{Parameters:} trust-region termination function $d:(0,\infty)\to(0,\infty)$, smoothing update parameter $\sigma\in(0,1)$.
\For{$j=0,1,2,\ldots$}
    \State Approximately minimize $\Phi^{\gamma_j}(\bx)$ starting from $\bx_j$ using a globally convergent DFO trust-region method, terminating when it reaches minimnum trust-region radius $\Delta_{j,k} < d(\gamma_j)$ and returning approximate minimizer $\bx_{j+1}$. \label{ln_subproblem}
    \State Set $\gamma_{j+1}=\sigma\gamma_j$.
\EndFor
\end{algorithmic}
\caption{Smoothing model-based DFO method for \eqref{eq_prob} \cite[Algorithm 4.1]{garmanjani2016trust}}
\label{alg_dfo_smoothing}
\end{algorithm}

For Algorithm~\ref{alg_dfo_smoothing} to converge, we require the following properties about the smoothed function $\Phi^{\gamma}$.

\begin{assumption} \label{ass_smoothing}
    For any $\gamma>0$, the function $\Phi^{\gamma}:\R^n\to\R$ satisfies:
    \begin{enumerate}[label=(\alph*)]
        \item $\Phi^{\gamma}$ has $L_{\Phi}(\gamma)$-Lipschitz continuous gradient for some $L_{\Phi}(\gamma)>0$ and is bounded below on $\R^n$.
        \item For any $\bx\in\R^n$, $\lim_{\gamma\to 0^{+}} \Phi^{\gamma}(\bx)=\Phi(\bx)$.
    \end{enumerate}
\end{assumption}

In \cite{garmanjani2016trust}, the sufficient decrease condition in the inner loop (line \ref{ln_subproblem} of Algorithm~\ref{alg_dfo_smoothing}) is required to be based on the ratio
\begin{align}
    R_k \coloneqq \frac{\Phi^{\gamma_j}(\bx_k) - \Phi^{\gamma_j}(\bx_k + \bs_k) - c\Delta_k^{p}}{m_k(\bx_k) - m_k(\bx_k + \bs_k)} \label{eq_ratio_modified}
\end{align}
for some $c>0$ and $p>1$, which is a slight modification of \eqref{eq_ratio}.

The global convergence for Algorithm~\ref{alg_dfo_smoothing} is given by the following result.

\begin{theorem}[Theorem 4.3, \cite{garmanjani2016trust}] \label{thm_smoothing_convergence}
    Suppose Assumption~\ref{ass_smoothing} holds, and all model Hessians used in line~\ref{ln_subproblem} of Algorithm~\ref{alg_dfo_smoothing} are uniformly bounded.
    Then if $\lim_{\gamma\to 0^{+}} d(\gamma)=0$ and $\lim_{\gamma\to 0^{+}} L_{\Phi}(\gamma) d(\gamma) = 0$, then $\lim_{j\to\infty} \|\grad \Phi^{\gamma_j}(\bx_j)\|=0$.
\end{theorem}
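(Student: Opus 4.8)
The plan is to analyze the inner minimization at each fixed smoothing level separately and then let $\gamma_j\to 0^+$. First I would argue that each inner loop in line~\ref{ln_subproblem} terminates after finitely many iterations, so that the returned iterates $\bx_{j+1}$ are well defined. By \assref{ass_smoothing}(a), for fixed $j$ the function $\Phi^{\gamma_j}$ is continuously differentiable with $L_{\Phi}(\gamma_j)$-Lipschitz gradient and is bounded below, so it meets the standing smoothness hypotheses of the underlying globally convergent smooth DFO trust-region method. Global convergence of that method forces the trust-region radius to become arbitrarily small (its criticality mechanism ties a vanishing model gradient to a vanishing radius), so $\liminf_k \Delta_{j,k}=0$ and the stopping test $\Delta_{j,k}<d(\gamma_j)$ is triggered after finitely many inner steps.

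The crux is a bound linking the true smoothed gradient at the returned point to the terminal radius $\Delta_{j,k^*}<d(\gamma_j)$. At (or immediately before) the terminating inner iterate the radius was just reduced, which in these methods only happens on a criticality or unsuccessful step carrying a fully linear model $p$ with model gradient $\bg$. A smooth analogue of \lemref{lem_suc_safety} then applies: were $\Delta_{j,k^*}$ small relative to $\norm{\bg}$, the step would have been (very) successful and the radius would not have shrunk below $d(\gamma_j)$; hence the unsuccessful/criticality reduction forces $\norm{\bg}=\bigO(\kappaef\,\Delta_{j,k^*})$. Combining this with the full-linearity estimate \eqref{eq_fl_g}, namely $\norm{\grad \Phi^{\gamma_j}(\bx_{j+1})-\bg}\le \kappaeg \Delta_{j,k^*}$, yields
\begin{align*}
    \norm{\grad \Phi^{\gamma_j}(\bx_{j+1})} \le \bigl(\kappaeg + C_0\,\kappaef\bigr)\,\Delta_{j,k^*},
\end{align*}
for a constant $C_0$ depending only on the uniform model-Hessian bound and the algorithmic constants.

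The essential point, which I expect to be the main obstacle, is to make explicit how the fully linear constants depend on the smoothing level. Using a \lemref{lem_kappa_generic}-type result together with the assumed uniform bound on the model Hessians, one has $\kappaeg,\kappaef = \bigO(L_{\Phi}(\gamma_j))+\bigO(1)$, exactly as in \lemref{lem_kappa} where $\kappaeg$ grows linearly in the gradient-Lipschitz modulus. Substituting gives
\begin{align*}
    \norm{\grad \Phi^{\gamma_j}(\bx_{j+1})} \le C_1\, L_{\Phi}(\gamma_j)\, d(\gamma_j) + C_2\, d(\gamma_j),
\end{align*}
and tracking this proportionality carefully is what forces both decay conditions on $d$ to appear in the hypotheses.

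Finally I would pass to the limit. Since $\gamma_{j+1}=\sigma\gamma_j$ with $\sigma\in(0,1)$, we have $\gamma_j=\sigma^{j}\gamma_0\to 0^+$, so the hypotheses $\lim_{\gamma\to 0^+}d(\gamma)=0$ and $\lim_{\gamma\to 0^+}L_{\Phi}(\gamma)d(\gamma)=0$ send both terms on the right to zero. This establishes $\lim_{j\to\infty}\norm{\grad \Phi^{\gamma_j}(\bx_{j+1})}=0$, i.e.\ first-order convergence of the returned approximate minimizers; after the harmless index shift in Algorithm~\ref{alg_dfo_smoothing} this is precisely the asserted limit $\lim_{j\to\infty}\norm{\grad\Phi^{\gamma_j}(\bx_j)}=0$.
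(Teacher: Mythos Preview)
The paper does not prove this statement: it is quoted verbatim as Theorem~4.3 of \cite{garmanjani2016trust} and used as a black box, so there is no ``paper's own proof'' to compare against. Your sketch is a plausible reconstruction of the argument one would expect in \cite{garmanjani2016trust}: finite termination of each inner loop via $\liminf_k \Delta_{j,k}=0$, then a contrapositive of the ``small radius $\Rightarrow$ very successful'' lemma to bound the model gradient by $\bigO(\Delta_{j,k^*})$, full linearity to pass to the true gradient, and finally $\kappaef,\kappaeg=\bigO(L_\Phi(\gamma_j))$ to get the factor $L_\Phi(\gamma_j)d(\gamma_j)$.

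Two places deserve more care. First, the step ``the radius was just reduced, which only happens on a criticality or unsuccessful step carrying a fully linear model'' is not automatic: in most DFO frameworks the model at the terminating iterate need not be fully linear, and one typically has to rewind to the most recent iterate with a fully linear model (or to the criticality phase) and control the distance between that iterate and $\bx_{j+1}$. Second, the ``harmless index shift'' is not harmless as written: you establish $\|\grad\Phi^{\gamma_j}(\bx_{j+1})\|\to 0$, which after reindexing gives $\|\grad\Phi^{\gamma_{j-1}}(\bx_j)\|\to 0$, not $\|\grad\Phi^{\gamma_j}(\bx_j)\|\to 0$. Since $\Phi^{\gamma_{j-1}}$ and $\Phi^{\gamma_j}$ are different functions, an extra argument (or a check that the indexing convention in \cite{garmanjani2016trust} differs from Algorithm~\ref{alg_dfo_smoothing} here) is needed to close this gap.
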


We also note that \cite[Corollary 4.1]{garmanjani2016trust} provides a worst-case complexity bound for Algorithm~\ref{alg_dfo_smoothing} of $\bigO(|\log(\epsilon)| \epsilon^{-3})$ function evaluations to reach an outer iteration $j$ with $\|\grad \Phi^{\gamma_j}(\bx_j)\| < \bigO(\epsilon)$ (with constants depending on the dimension $n$), provided we choose $p=3/2$ in \eqref{eq_ratio_modified} and trust-region termination function $d(\gamma)=\gamma^2$, and $L_{\Phi}(\gamma)=\bigO(1/\gamma)$ in Assumption~\ref{ass_smoothing}.

\subsection{Adaptation to Regularized Least-Squares}
We now outline how we adapt Algorithm~\ref{alg_dfo_smoothing} to the regularized least-squares setting \eqref{eq_prob_nlls}.
Our approach uses DFO-GN \cite[Algorithm 1]{cartis2019derivative}---essentially Algorithm~\ref{alg_dfo} with $h=0$---with modified interpolation models similar to \eqref{eq_model_nlls}.

For the objective $\Phi$ \eqref{eq_prob_nlls}, our smoothed approximation will be 
\begin{align}
    \Phi^{\gamma}(\bx) \coloneqq f(\bx) + M_{h}^{\mu(\gamma)}(\bx) = \frac{1}{2}\|\br(\bx)\|^2 + M_{h}^{\mu(\gamma)}(\bx), \label{eq_smoothed_nlls}
\end{align}
where 
\begin{align}
    M_h^{\mu} (\bx) \coloneqq \min_{\bz \in \operatorname{dom} h}\left\{h(\bz)+\frac{1}{2\mu}\norm{\bz-\bx}^{2}\right\},
\end{align}
is the Moreau envelope of $h$ for the parameter $\mu>0$.
Note that for a given $\gamma$, we will choose a value $\mu(\gamma)$ for the Moreau envelope parameter.
We will use the following standard properties of $M_{h}^{\mu}$.

\begin{lemma} \label{lemma_smooth}
    Suppose $h$ satisfies \assref{ass_regu}. Then for any $\mu>0$, we have:
    \begin{enumerate}[label=(\alph*)]
        \item $M_{h}^{\mu}(\bx) \leq h(\bx) \leq M_{h}^{\mu}(\bx) + \frac{L_h^2\mu}{2}$ for all $\bx \in \operatorname{dom} h$.
        \item $M_h^{\mu}$ is a convex and differentiable function on $\operatorname{dom} h$, and $\grad M_h^{\mu}$ is Lipschitz continuous with Lipschitz constant $\frac{1}{\mu}$.
    \end{enumerate}
\end{lemma}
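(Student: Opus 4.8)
The plan is to prove the two parts separately from the variational definition of the Moreau envelope, with the proximal point $\prox_{\mu h}(\bx) \coloneqq \argmin_{\bz} \{h(\bz) + \frac{1}{2\mu}\norm{\bz - \bx}^2\}$ as the central object. For part (a), the upper bound $M_h^{\mu}(\bx) \le h(\bx)$ is immediate: I would substitute the feasible choice $\bz = \bx$ into the minimization defining $M_h^{\mu}$, which yields the value $h(\bx)$ and hence bounds the minimum from above. For the lower bound, let $\bz^\star \coloneqq \prox_{\mu h}(\bx)$ denote the (unique) minimizer, so that $M_h^{\mu}(\bx) = h(\bz^\star) + \frac{1}{2\mu}\norm{\bz^\star - \bx}^2$. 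Then $h(\bx) - M_h^{\mu}(\bx) = [h(\bx) - h(\bz^\star)] - \frac{1}{2\mu}\norm{\bz^\star - \bx}^2$, and since $\bx, \bz^\star \in \operatorname{dom} h$ (a convex set), \assref{ass_regu} gives $h(\bx) - h(\bz^\star) \le L_h \norm{\bx - \bz^\star}$. Writing $t \coloneqq \norm{\bx - \bz^\star}$, it then remains to maximize the scalar expression $L_h t - t^2/(2\mu)$ over $t \ge 0$; its maximizer is $t = L_h \mu$, giving value $L_h^2 \mu / 2$, which is exactly the claimed bound.

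For part (b), I would first establish convexity by noting that the bivariate map $(\bx, \bz) \mapsto h(\bz) + \frac{1}{2\mu}\norm{\bz - \bx}^2$ is jointly convex (a sum of the convex $h(\bz)$ and a jointly convex quadratic), so that partial minimization over $\bz$ produces a convex function of $\bx$. Differentiability together with the gradient formula $\grad M_h^{\mu}(\bx) = \frac{1}{\mu}(\bx - \prox_{\mu h}(\bx))$ follows from the standard theory of the Moreau envelope for proper closed convex $h$ (see \cite[Chapter 6]{beck2017first}); alternatively one can derive the gradient from Danskin's theorem using uniqueness of the prox point.

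Finally, for the Lipschitz estimate I would invoke the firm non-expansiveness of the proximal map: both $\prox_{\mu h}$ and its complement $I - \prox_{\mu h}$ are firmly non-expansive, hence $1$-Lipschitz. Consequently $\norm{\grad M_h^{\mu}(\bx) - \grad M_h^{\mu}(\by)} = \frac{1}{\mu}\norm{(\bx - \prox_{\mu h}(\bx)) - (\by - \prox_{\mu h}(\by))} \le \frac{1}{\mu}\norm{\bx - \by}$, giving the stated constant $1/\mu$. I expect the main obstacle to be the differentiability and Lipschitz-gradient claim in part (b), since a fully self-contained argument requires establishing the non-expansiveness properties of the prox operator; in practice I would cite the standard results in \cite{beck2017first} rather than reprove them, so the only genuine computation is the one-dimensional maximization in part (a).
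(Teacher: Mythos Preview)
Your proposal is correct and aligns with the paper's treatment: the paper's own proof is simply a pointer to \cite[Theorems 6.55, 6.60 \& 10.51]{beck2017first}, exactly the standard Moreau-envelope results you invoke for part (b). Your explicit one-dimensional maximization for part (a) is a clean self-contained derivation of what the paper leaves to the cited reference; nothing further is needed.
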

\begin{proof}
	See for example \cite[Theorems 6.55, 6.60 \& 10.51]{beck2017first}.
\end{proof}

From this, we see that our smoothed approximation \eqref{eq_smoothed_nlls} is a suitable choice for Algorithm~\ref{alg_dfo_smoothing}.

\begin{lemma} \label{lem_smoothing_assumptions}
    Suppose $\br$ satisfies \assref{ass_smooth_nlls} and $h$ satisfies \assref{ass_regu}.
    If $\mu(\gamma) = \Theta(\gamma)$ as $\gamma\to 0^{+}$ then $\Phi^{\gamma}$ \eqref{eq_smoothed_nlls} satisfies Assumption~\ref{ass_smoothing} with $L_{\Phi}(\gamma)=\bigO(1/\gamma)$.
\end{lemma}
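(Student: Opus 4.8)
The plan is to verify the two parts of \assref{ass_smoothing} directly, viewing $\Phi^{\gamma}$ as the sum of the least-squares term $f(\bx)=\tfrac12\norm{\br(\bx)}^2$ and the Moreau envelope $M_h^{\mu(\gamma)}$, and to push all the regularity onto the two results already available: \lemref{lem_lip_gradf} for the Lipschitz gradient of $f$, and \lemref{lemma_smooth} for the properties of $M_h^{\mu}$. This makes the lemma essentially an assembly exercise rather than a new argument.

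For part (a), I would first handle the Lipschitz gradient. Writing $\grad \Phi^{\gamma} = \grad f + \grad M_h^{\mu(\gamma)}$, \lemref{lem_lip_gradf} (applicable since $\br$ satisfies \assref{ass_smooth_nlls}) gives that $\grad f$ is $L_{\grad f}$-Lipschitz with $L_{\grad f}=\rmax L_J + \jmax^2$, while \lemref{lemma_smooth}(b) gives that $\grad M_h^{\mu(\gamma)}$ is $(1/\mu(\gamma))$-Lipschitz. Summing these yields that $\grad \Phi^{\gamma}$ is Lipschitz with constant $L_{\Phi}(\gamma) := L_{\grad f} + 1/\mu(\gamma)$. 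Since $\mu(\gamma)=\Theta(\gamma)$, there is a constant $c>0$ with $\mu(\gamma)\geq c\gamma$ for all small $\gamma$, hence $1/\mu(\gamma)=\bigO(1/\gamma)$; as $L_{\grad f}$ is a fixed constant, this gives $L_{\Phi}(\gamma)=\bigO(1/\gamma)$, the claimed rate. For boundedness below I would note $f(\bx)\geq 0$ and, directly from the definition, $M_h^{\mu}(\bx)=\min_{\bz}\{h(\bz)+\tfrac{1}{2\mu}\norm{\bz-\bx}^2\}\geq \inf_{\bz} h(\bz)\geq\hlow$ since the penalty is nonnegative and $h\geq\hlow$ by \assref{ass_regu}; hence $\Phi^{\gamma}\geq\hlow$ uniformly in $\gamma$.

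For part (b) I would use the envelope approximation bound: because $h$ is real-valued and $L_h$-Lipschitz under \assref{ass_regu}, \lemref{lemma_smooth}(a) gives $0\leq h(\bx)-M_h^{\mu}(\bx)\leq \tfrac{L_h^2\mu}{2}$ for every $\bx$. Since $\Phi^{\gamma}-\Phi = M_h^{\mu(\gamma)}-h$, this yields $\abs{\Phi^{\gamma}(\bx)-\Phi(\bx)}\leq \tfrac{L_h^2\mu(\gamma)}{2}$, and with $\mu(\gamma)=\Theta(\gamma)\to 0$ as $\gamma\to0^+$ we get $\lim_{\gamma\to0^+}\Phi^{\gamma}(\bx)=\Phi(\bx)$ for each fixed $\bx$.

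I expect no genuine obstacle here. The one point needing care is the translation of the two-sided asymptotic $\mu(\gamma)=\Theta(\gamma)$: only the lower bound $\mu(\gamma)\gtrsim\gamma$ enters the gradient-Lipschitz rate in part (a), whereas the upper bound $\mu(\gamma)\lesssim\gamma$ is what controls the approximation error in part (b). A secondary subtlety is that \lemref{lem_lip_gradf} only furnishes Lipschitz continuity of $\grad f$ on the convex hull of $\mathcal{B}=\cup_k B(\bx_k,\Delta_{\max})$ rather than on all of $\R^n$; since the inner DFO solver invoked in \algref{alg_dfo_smoothing} only samples within this region, I would state the Lipschitz bound on $\mathcal{B}$ and remark that this is all that is needed for the subsequent use of \assref{ass_smoothing} in \thmref{thm_smoothing_convergence}.
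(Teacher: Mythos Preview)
Your proposal is correct and follows exactly the route the paper takes: invoking \lemref{lem_lip_gradf} for the Lipschitz gradient of $f$ and \lemref{lemma_smooth} for the properties of the Moreau envelope, then assembling these to verify both parts of \assref{ass_smoothing}. The paper's own proof is a one-line appeal to precisely these ingredients, so your expanded version is a faithful unpacking of it (and your remark about the domain $\mathcal{B}$ for the Lipschitz bound is a reasonable clarification of a point the paper leaves implicit).
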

\begin{proof}
    This follows immediately from Assumptions~\ref{ass_smooth_nlls} and~\ref{ass_regu}, together with Lemmas~\ref{lem_lip_gradf} and \ref{lemma_smooth}.
\end{proof}

In DFO-GN (called in line~\ref{ln_subproblem} of Algorithm~\ref{alg_dfo_smoothing}), as in Section~\ref{sec_nlls}, we build linear interpolation models of $\br(\bx)$ of the form
\begin{align}
    \br(\bx_k+\bs) \approx \bem_k(\bx_k+\bs) \coloneqq \br(\bx_k) + J_k \bs,
\end{align}
for some $J_k$ derived from the interpolation conditions \eqref{eq_interpolation_nlls}, yielding our model
\begin{align}
    \Phi^{\gamma}(\bx_k+\bs) \approx m_k(\bx_k+\bs) \coloneqq \frac{1}{2}\|\bem_k(\bx_k+\bs)\|^2 + M_{h}^{\mu(\gamma)}(\bx_k+\bs). \label{eq_model_nlls_smoothing}
\end{align}
From Lemma~\ref{lem_kappa}, this model is a fully linear approximation for $\Phi^{\gamma}$ whenever the interpolation set used for \eqref{eq_interpolation_nlls} is $\Lambda$-poised, and so all geometry-improving procedures from DFO-GN can be used here.

It only remains to describe how the trust-region subproblem 
\begin{align}
    \min_{\|\bs\|\leq\Delta_k} m_k(\bx_k+\bs) = \frac{1}{2}\|\bem_k(\bx_k+\bs)\|^2 + M_{h}^{\mu(\gamma)}(\bx_k+\bs) \label{eq_smoothing_trs}
\end{align}
is solved, and the choice of $\mu(\gamma)$.
These are discussed in \secref{sec_sfista}. 

\subsection{Global Convergence}
\thmref{thm_smoothing_convergence} establishes the global convergence of Algorithm \ref{alg_dfo_smoothing}, but it does not provide any standard first-order optimality results. 
We now show that any accumulation point of the $\bx_j$ generated by Algorithm~\ref{alg_dfo_smoothing} is a Clarke-stationary point of $\Phi$.

\begin{theorem}
    Suppose the assumptions of \thmref{thm_smoothing_convergence} and \lemref{lem_smoothing_assumptions} hold.
    If $\bx^*$ is any accumulation point of the iterates $\bx_j$ of Algorithm~\ref{alg_dfo_smoothing}, then $\bm{0} \in \partial_{C} \Phi(\bx^*)$.
\end{theorem}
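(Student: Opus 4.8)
The plan is to reduce the whole statement to the closed-graph property of the convex subdifferential $\partial h$, exploiting the fact that the gradient of the Moreau envelope is \emph{exactly} a subgradient of $h$ evaluated at the associated proximal point. Since $f\in C^1$ and $h$ is convex, I would first record the Clarke sum rule $\partial_C\Phi(\bx^*)=\grad f(\bx^*)+\partial h(\bx^*)$ (where $\partial h$ is the ordinary convex subdifferential, which coincides with the Clarke subdifferential for convex $h$). Thus the goal $\bm{0}\in\partial_C\Phi(\bx^*)$ is equivalent to showing $-\grad f(\bx^*)\in\partial h(\bx^*)$.

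Next I would pass to a subsequence $\bx_{j_l}\to\bx^*$ realizing the accumulation point. \thmref{thm_smoothing_convergence} gives $\grad\Phi^{\gamma_{j_l}}(\bx_{j_l})\to\bm{0}$, and since $\Phi^{\gamma}=f+M_h^{\mu(\gamma)}$ we have $\grad\Phi^{\gamma}=\grad f+\grad M_h^{\mu(\gamma)}$. Continuity of $\grad f$ yields $\grad f(\bx_{j_l})\to\grad f(\bx^*)$, so the smoothing contribution $\bg_l\coloneqq\grad M_h^{\mu(\gamma_{j_l})}(\bx_{j_l})$ converges to $-\grad f(\bx^*)$.

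The crucial identity (standard Moreau-envelope/prox calculus, recoverable from the optimality conditions behind \lemref{lemma_smooth}) is $\grad M_h^{\mu}(\bx)=\tfrac1\mu\bigl(\bx-\prox_{\mu h}(\bx)\bigr)\in\partial h\bigl(\prox_{\mu h}(\bx)\bigr)$. Writing $\bz_l\coloneqq\prox_{\mu(\gamma_{j_l})h}(\bx_{j_l})$, this gives $\bg_l\in\partial h(\bz_l)$ and $\bx_{j_l}-\bz_l=\mu(\gamma_{j_l})\,\bg_l$. Since $\{\bg_l\}$ converges it is bounded (equivalently $\norm{\bg_l}\le L_h$ as $\bg_l\in\partial h(\bz_l)$ and $h$ is $L_h$-Lipschitz), while $\mu(\gamma_{j_l})=\Theta(\gamma_{j_l})\to0$ by \lemref{lem_smoothing_assumptions} together with $\gamma_j=\sigma^{j}\gamma_0\to0$; hence $\norm{\bx_{j_l}-\bz_l}\to0$ and therefore $\bz_l\to\bx^*$.

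Finally I would invoke the closed-graph property of $\partial h$ (valid for proper closed convex $h$): from $\bz_l\to\bx^*$, $\bg_l\to-\grad f(\bx^*)$, and $\bg_l\in\partial h(\bz_l)$ we conclude $-\grad f(\bx^*)\in\partial h(\bx^*)$, i.e.\ $\bm{0}\in\partial_C\Phi(\bx^*)$. The only genuine obstacle is the convergence $\bz_l\to\bx^*$: this is the step that transports the vanishing \emph{smoothed} gradient back to a true subgradient of the nonsmooth $h$ at the limit, and it hinges precisely on the scaling $\mu(\gamma)=\Theta(\gamma)\to0$ combined with the uniform bound on the subgradients $\bg_l$. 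Everything else is the Clarke sum rule, continuity of $\grad f$, and the standard prox identity already packaged in \lemref{lemma_smooth}.
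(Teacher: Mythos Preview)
Your proof is correct and follows the same overall route as the paper: pass to a subsequence, use continuity of $\grad f$ to isolate $\grad M_h^{\mu(\gamma_{j_l})}(\bx_{j_l})\to -\grad f(\bx^*)$, deduce $-\grad f(\bx^*)\in\partial h(\bx^*)$, and finish with the Clarke sum rule $\partial_C\Phi=\grad f+\partial h$.

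The one point of difference is the justification of the implication $\grad M_h^{\mu_l}(\bx_{j_l})\to -\grad f(\bx^*)\Rightarrow -\grad f(\bx^*)\in\partial h(\bx^*)$. The paper dispatches this in one line by citing an external result (\cite[Theorem 5.5]{Jourani1999}) together with $\mu(\gamma_{j_l})\to 0$. You instead unpack it explicitly via the prox identity $\grad M_h^\mu(\bx)=\tfrac{1}{\mu}(\bx-\prox_{\mu h}(\bx))\in\partial h(\prox_{\mu h}(\bx))$, show the proximal points $\bz_l$ converge to $\bx^*$ (using $\mu_l\to 0$ and boundedness of $\bg_l$), and then apply the closed-graph property of the convex subdifferential. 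Your argument is more elementary and entirely self-contained using only facts already packaged in the paper (Lemma~\ref{lemma_smooth} and \assref{ass_regu}); the paper's version is terser but relies on an external reference. Both are valid, and the underlying mechanism is the same.
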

\begin{proof}
    From \thmref{thm_smoothing_convergence}, we have $\grad f(\bx_j) + \grad M_{h}^{\mu(\gamma_j)}(\bx_j) \to \bm{0}$, and so since $\bx_{j_k}\to\bx^*$ for some subsequence of iterates and $f$ is continuously differentiable, we have $\grad M_{h}^{\mu(\gamma_{j_k})}(\bx_{j_k}) \to -\grad f(\bx^*)$.
    From \cite[Theorem 5.5]{Jourani1999} and $\mu(\gamma_{j_k})\to 0$, we conclude that $-\grad f(\bx^*) \in \partial h(\bx^*)$.
    Finally, we have $\partial_{C}\Phi(\bx) = \grad f(\bx) + \partial h(\bx)$ from \cite[Corollary 1, p.~39]{clarke1990optimization} and the result follows.
\end{proof}

\section{Calculating Subproblem Solutions} \label{sec_sfista}

There are three places where we need an algorithm to calculate subproblem solutions: the trust-region subproblem \eqref{eq_trsub} in \algref{alg_dfo}, calculating $\overline{\eta}_1(\bx_k)$ in Algorithms~\ref{alg_dfo} and \ref{alg_geo} (in \eqref{eq_min_criticality_alg1} and \eqref{eq_min_criticality_alg2} respectively), and the trust-region subproblem \eqref{eq_smoothing_trs} in the smoothed DFO method Algorithm~\ref{alg_dfo_smoothing}.
In the case where $H_k$ is positive semidefinite, as in the case solving regularized nonlinear least-squares problems \eqref{eq_prob_nlls} (with models \eqref{eq_model_nlls} and \eqref{eq_model_nlls_smoothing}), this becomes practical.

In all three cases, our subproblems can be written in the form
\begin{align} \label{eq_sfista_min}
    \min_{\bd} G(\bd) := \bg^{T}\bd + \frac{1}{2}\bd^{T}H\bd + h(\bx+\bd) + I_C(\bd),
\end{align}
where $H$ is positive semidefinite and $I_C$ is an indicator function for the Euclidean ball $C := B(\b0,r)$.
This problem is the sum of a smooth convex function and two nonsmooth convex functions.
To solve this to arbitrary accuracy, we will use S-FISTA \cite[Chapter 10.8]{beck2017first}, a smoothed version of FISTA.

Using \lemref{lemma_smooth}, we approximate $h$ by $M_h^{\mu}$ to obtain a smoothed formulation of \eqref{eq_sfista_min}:
\begin{align}
    \min_{\bd} G_\mu(\bd) := f(\bx) + \bg^{T}\bd + \frac{1}{2}\bd^{T}H\bd + M_h^{\mu}(\bx+\bd) + I_C(\bd),
\end{align}
for some smoothing parameter $\mu>0$. 
Defining $F_\mu(\bd) \coloneqq f(\bx) + \bg^{T}\bd + \frac{1}{2}\bd^{T}H\bd + M_h^{\mu}(\bx+\bd)$, from \cite[Theorem 6.60]{beck2017first} we have
\begin{align}
    \grad F_\mu(\bd) = \bg + H\bd + \frac{1}{\mu}(\bx+\bd-\prox_{\mu h}(\bx+\bd)),
\end{align}
where the proximal operator $\prox_{\mu h}(\cdot)$ is defined by
\begin{align}
    \prox_{\mu h}(\by) \coloneqq \arg\min_{\bz \in \operatorname{dom}h}\left\{h(\bz)+\frac{1}{2\mu}\norm{\bz-\by}^2\right\}.
\end{align}
Note that the proximal operator of $I_C$ is the (well-defined) Euclidean projection $P_C$ onto the set $C$. 
The S-FISTA algorithm \cite[Chapter 10.8.4]{beck2017first}, specialized to the problem \eqref{eq_sfista_min} is given in \algref{alg_sfista}.

\begin{algorithm}
\begin{algorithmic}[1]
\Require smoothing parameter $\mu>0$.
\State Set $\bd^{0} = \by^{0} = \b0$, $t_0 = 1$, and step size $L=\norm{H}+\frac{1}{\mu}$. 
\For{k=0, 1, 2, \dots}
\State set $\bd^{k+1} = P_C\left(\by^{k}-\frac{1}{L}\nabla F_\mu(\by^{k})\right)$;
\State set $t_{k+1}=\frac{1+\sqrt{1+4t_k^2}}{2}$;
\State compute $\by^{k+1}=\bd^{k+1}+\left(\frac{t_k-1}{t_{k+1}}\right)(\bd^{k+1}-\bd^{k})$.
\EndFor
\end{algorithmic}
\caption{S-FISTA \cite[Chapter 10.8.4]{beck2017first} for solving \eqref{eq_sfista_min}}
\label{alg_sfista}
\end{algorithm}

The following theorem shows that for a given accuracy level $\epsilon$, the smoothing parameter $\mu$ and the number of iterations $K$ in \algref{alg_sfista} can be chosen carefully to guarantee optimality $\epsilon$ in $\bigO(\frac{1}{\epsilon})$ iterations.

\begin{theorem} \label{thm_sfista_accuracy}
Suppose \assref{ass_smooth}, \assref{ass_regu} and \assref{ass_boundedhess} hold. Consider the set $C \coloneqq \left\{\bd: \norm{\bd} \leq r\right\}$.
Let $\{\bd^{k}\}_{k \geq 0}$ be the sequence generated by S-FISTA. For an accuracy level $\epsilon>0$, if the smoothing parameter $\mu$ and the number of iterations $K$ are set as
\begin{align} \label{eq_sfista_para}
    \mu = \frac{2\epsilon}{L_h(L_h+\sqrt{L_h^2+2\norm{H}\epsilon})} \quad \textnormal{and} \quad K = \frac{r(2L_h+\sqrt{2\norm{H}\epsilon})}{\epsilon},
\end{align}
then for any $k \geq K$, it holds that $G(\bd^{k})-G(\boldsymbol{d}^*)\leq\epsilon$, where $\boldsymbol{d}^*$ is in the optimal set of problem in \eqref{eq_sfista_min}.
\end{theorem}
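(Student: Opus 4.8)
The plan is to bound the optimality gap $G(\bd^k)-G(\bd^*)$ as the sum of a fixed \emph{smoothing bias} and a decaying \emph{FISTA optimization error}, and then to substitute the prescribed $\mu$ and $K$ and reduce the claim to one elementary inequality. First I would record the two-sided estimate supplied by \lemref{lemma_smooth}(a). Writing $G_\mu(\bd)\coloneqq\bg^{T}\bd+\frac{1}{2}\bd^{T}H\bd+M_h^{\mu}(\bx+\bd)+I_C(\bd)$ (this differs from the displayed smoothed objective only by the additive constant $f(\bx)$, which affects neither the S-FISTA iterates nor any gap), the bound $M_h^{\mu}(\bx+\bd)\le h(\bx+\bd)\le M_h^{\mu}(\bx+\bd)+\frac{L_h^2\mu}{2}$ immediately gives $G_\mu(\bd)\le G(\bd)\le G_\mu(\bd)+\frac{L_h^2\mu}{2}$ for every feasible $\bd$.

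Next I would invoke the standard FISTA guarantee for the smoothed composite problem $\min_{\bd} F_\mu(\bd)+I_C(\bd)$. By \lemref{lemma_smooth}(b) the map $\grad M_h^{\mu}$ is $\frac{1}{\mu}$-Lipschitz and the quadratic part has $\norm{H}$-Lipschitz gradient, so $\grad F_\mu$ is $L$-Lipschitz with $L=\norm{H}+\frac{1}{\mu}$, which is exactly the step-size constant used in \algref{alg_sfista}. Since $\bd^0=\b0$ and the smoothed minimizer $\bd^*_\mu$ lies in $C=B(\b0,r)$, we have $\norm{\bd^0-\bd^*_\mu}\le r$, and the FISTA rate \cite{beck2017first} yields $G_\mu(\bd^k)-G_\mu(\bd^*_\mu)\le\frac{2Lr^2}{(k+1)^2}$. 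Chaining the four estimates $G(\bd^k)\le G_\mu(\bd^k)+\frac{L_h^2\mu}{2}$, the FISTA bound, $G_\mu(\bd^*_\mu)\le G_\mu(\bd^*)$, and $G_\mu(\bd^*)\le G(\bd^*)$ produces the master inequality $G(\bd^k)-G(\bd^*)\le\frac{L_h^2\mu}{2}+\frac{2Lr^2}{(k+1)^2}$.

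Finally I would substitute the stated parameters using the shorthands $s\coloneqq\sqrt{L_h^2+2\norm{H}\epsilon}$ and $w\coloneqq\sqrt{2\norm{H}\epsilon}$. The smoothing term simplifies directly to $\frac{L_h^2\mu}{2}=\frac{L_h\epsilon}{L_h+s}$, while rationalizing $\mu$ gives $\frac{1}{\mu}=\frac{L_h(s+L_h)}{2\epsilon}$ and hence $2Lr^2=r^2\bigl(2\norm{H}+\frac{L_h(s+L_h)}{\epsilon}\bigr)$. For $k\ge K$ one has $(k+1)^2>K^2=\frac{r^2(2L_h+w)^2}{\epsilon^2}$, and combining this with the identities $2\norm{H}\epsilon=w^2$ and $L_h^2+w^2=s^2$ collapses the optimization term to $\frac{2Lr^2}{(k+1)^2}<\frac{\epsilon\,s(s+L_h)}{(2L_h+w)^2}$. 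Adding the two contributions, the whole claim reduces to verifying $\frac{L_h}{L_h+s}+\frac{s(s+L_h)}{(2L_h+w)^2}\le 1$, which after clearing denominators and cancelling the common factor $s$ becomes $(s+L_h)^2\le(2L_h+w)^2$, i.e.\ $s\le L_h+w$; this is precisely $\sqrt{L_h^2+w^2}\le L_h+w$, true because $(L_h+w)^2=L_h^2+2L_hw+w^2\ge L_h^2+w^2$.

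The argument is largely bookkeeping: the only genuinely delicate point is the final substitution, where one must see that the two competing error terms are engineered to sum to at most $\epsilon$. I expect this to be the main obstacle, but it dissolves once $s$ and $w$ are introduced and the target inequality is recognized as the elementary fact $\sqrt{L_h^2+2\norm{H}\epsilon}\le L_h+\sqrt{2\norm{H}\epsilon}$. The remaining care is only in confirming $\norm{\bd^*_\mu}\le r$ (forced by $I_C$) and that the constant $f(\bx)$ does not enter the gap.
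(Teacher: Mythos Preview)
Your proof is correct. The paper's own proof is a one-line citation: it substitutes $(\alpha,\beta,\Gamma,L_f)=(1,\tfrac{L_h^2}{2},r^2,\|H\|)$ into \cite[Theorem~10.57]{beck2017first} and stops. What you have done is essentially reproduce the proof of that cited theorem in this particular instance: the master inequality $G(\bd^k)-G(\bd^*)\le\tfrac{L_h^2\mu}{2}+\tfrac{2Lr^2}{(k+1)^2}$ is exactly Beck's smoothing-plus-FISTA bound, and your algebraic verification that the stated $\mu$ and $K$ force the right-hand side below $\epsilon$ (via $\sqrt{L_h^2+w^2}\le L_h+w$) is the computation that theorem encapsulates. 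So the underlying route is the same; you have simply made it self-contained rather than relying on the external reference, at the cost of some bookkeeping.
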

\begin{proof}
Substitute $(\alpha,\beta,\Gamma,L_f)=(1, \frac{L_h^2}{2}, r^2, \|H\|)$ into \cite[Theorem 10.57]{beck2017first}.
\end{proof}

\subsection{Trust-region subproblem}
We are now able to show that \algref{alg_sfista} can be used to solve \eqref{eq_trsub} for \algref{alg_dfo} in the case of nonlinear least-squares problems (where $m_k$ is given by \eqref{eq_model_nlls}).
Within \algref{alg_sfista}, we set $r \coloneqq \Delta_k$, $\bg \coloneqq \bg_k$, $H \coloneqq H_k$ and $\epsilon \coloneqq (1-e_3)c_1\overline{\eta}_1(\bx_k)\min\left\{\Delta_k,\frac{\overline{\eta}_1(\bx_k)}{\max\{1,\norm{H_k}\}}\right\}$.
We use $K$ iterations of \algref{alg_sfista} with smoothing parameter $\mu$ as per \thmref{thm_sfista_accuracy}.
\lemref{lem_mk_dec} below shows that this gives us the required decrease \eqref{eq_mk_dec}.

\begin{lemma} \label{lem_mk_dec}
Suppose \assref{ass_regu} holds and the step $\bs_k$ is calculated such that
\begin{align} \label{eq_alg_suff_dec}
	m_k(\bx_{k}+\bs_k)-m_k(\bx_{k}+\bs_k^*) \leq (1-e_3)c_1\overline{\eta}_1(\bx_k)\min\left\{\Delta_k,\frac{\overline{\eta}_1(\bx_k)}{\max\{1,\norm{H_k}\}}\right\},
\end{align}
where $\bs_k^{*}$ is an exact minimizer of \eqref{eq_trsub}.
Then the step $\bs_k$ satisfies \eqref{eq_mk_dec} with $c_1 \coloneqq \dfrac{1}{2}\min\left\{1, \dfrac{1}{\Delta_{\max}^{2}}\right\}$. 
\end{lemma}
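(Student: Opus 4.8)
The plan is to first establish a Cauchy-type decrease guarantee for the \emph{exact} minimizer $\bs_k^*$ of \eqref{eq_trsub}, namely that
\begin{align*}
	m_k(\bx_k)-m_k(\bx_k+\bs_k^*) \geq c_1\,\overline{\eta}_1(\bx_k)\min\left\{\Delta_k,\frac{\overline{\eta}_1(\bx_k)}{\max\{1,\norm{H_k}\}}\right\},
\end{align*}
and then to subtract the suboptimality hypothesis \eqref{eq_alg_suff_dec}. Indeed, once the displayed bound is available, writing $m_k(\bx_k)-m_k(\bx_k+\bs_k) = [m_k(\bx_k)-m_k(\bx_k+\bs_k^*)] - [m_k(\bx_k+\bs_k)-m_k(\bx_k+\bs_k^*)]$ and bounding the second bracket above by \eqref{eq_alg_suff_dec} (it is nonnegative since $\bs_k^*$ is optimal) immediately leaves the factor $1-(1-e_3)=e_3$, giving exactly the decrease $e_3c_1\,\overline{\eta}_1(\bx_k)\min\{\cdots\}$ demanded by \eqref{eq_mk_dec}. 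So the whole difficulty is the Cauchy decrease for $\bs_k^*$.

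To prove the displayed bound I would let $\bd$ be an exact minimizer of the unit-ball subproblem defining $\eta_1(\bx_k)$ in \eqref{eq_cri_eta} (which exists by the Weierstrass theorem, since $h$ is continuous by \assref{ass_regu}), so that $\eta_1(\bx_k)=h(\bx_k)-\bg_k^{T}\bd-h(\bx_k+\bd)$ with $\norm{\bd}\leq 1$ and $\bx_k+\bd\in\operatorname{dom} h$. I then probe the model along the feasible arc $\bs=t\bd$ for $t\in[0,\min\{1,\Delta_k\}]$, which stays inside $B(\bx_k,\Delta_k)\cap\operatorname{dom} h$ (the membership in $\operatorname{dom} h$ following from convexity of $\operatorname{dom} h$ and $\bx_k,\bx_k+\bd\in\operatorname{dom} h$). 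Using the convexity bound $h(\bx_k+t\bd)\leq (1-t)h(\bx_k)+t\,h(\bx_k+\bd)$ together with the explicit quadratic form of $p_k$, a short calculation gives $m_k(\bx_k)-m_k(\bx_k+t\bd)\geq t\,\eta_1(\bx_k)-\tfrac{t^2}{2}\norm{H_k}$. Because $\bs_k^*$ minimizes $m_k$ over the whole trust region and each $t\bd$ is feasible, the left-hand side is bounded below by the same expression with $\bs_k^*$, so I may maximize the right-hand side over $t$.

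For the scalar maximization I would use the elementary fact $\max_{t\in[0,T]}\{at-\tfrac{b}{2}t^2\}\geq \tfrac{a}{2}\min\{T,a/b\}$ for $a,b,T>0$, applied with $a=\eta_1(\bx_k)$, $b=\max\{1,\norm{H_k}\}$ (bounding $\norm{H_k}\le\max\{1,\norm{H_k}\}$ first) and $T=\min\{1,\Delta_k\}$; this yields the lower bound $\tfrac12\,\eta_1(\bx_k)\min\{1,\Delta_k,\eta_1(\bx_k)/\max\{1,\norm{H_k}\}\}$. The main obstacle, and precisely the reason the constant $c_1=\tfrac12\min\{1,\Delta_{\max}^{-2}\}$ appears, is reconciling the extra $\min\{1,\cdot\}$ (inherited from the criticality measure being defined over the \emph{unit} ball) with the $\min\{\Delta_k,\cdot\}$ required in \eqref{eq_mk_dec}: when $\Delta_k>1$ the unit-ball cap costs a factor that must be absorbed into the constant. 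I would dispatch this by setting $M:=\min\{\Delta_k,\eta_1(\bx_k)/\max\{1,\norm{H_k}\}\}$ and verifying $\min\{1,M\}\geq \min\{1,\Delta_{\max}^{-2}\}\,M$ in the cases $M\leq 1$ and $M>1$, the latter using $M\leq\Delta_k\leq\Delta_{\max}<\Delta_{\max}^{2}$. Finally, since $x\mapsto x\min\{\Delta_k,x/\max\{1,\norm{H_k}\}\}$ is nondecreasing on $(0,\infty)$ and $\overline{\eta}_1(\bx_k)\leq\eta_1(\bx_k)$, I replace $\eta_1$ by $\overline{\eta}_1$ to reach the displayed Cauchy bound, after which the subtraction described above completes the proof.
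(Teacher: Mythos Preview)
Your proof is correct and follows the same overall architecture as the paper: establish a Cauchy-type decrease for the exact minimizer $\bs_k^*$ by probing the model along a direction coming from the criticality measure, use convexity of $h$ to linearize along that segment, maximize the resulting one-dimensional concave quadratic, absorb the mismatch between the unit-ball criticality radius and $\Delta_k$ into the constant $c_1$, pass from $\eta_1$ to $\overline{\eta}_1$ by monotonicity, and finally subtract the suboptimality hypothesis \eqref{eq_alg_suff_dec}.

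The one substantive variation is the choice of probe direction. The paper takes $\tilde{\bs}_k$ to be the minimizer of $\tilde{l}(\bx_k,\cdot)$ over $B(\b0,\Delta_k)$ (so the decrease is first expressed in terms of $\eta_{\Delta_k}(\bx_k)$), and then invokes the auxiliary inequality $\eta_r(\bx)\geq\min\{1,r\}\,\eta_1(\bx)$ from \cite[Lemma~2.1]{cartis2011evaluation} to pass to $\eta_1$, followed by a case split on $\Delta_k\lessgtr 1$. You instead probe along the \emph{unit}-ball minimizer $\bd$ with step $t\in[0,\min\{1,\Delta_k\}]$, which produces $\eta_1(\bx_k)$ directly and eliminates the need for that external lemma; your case split on $M\lessgtr 1$ then plays exactly the role of the paper's case split on $\Delta_k$. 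Your route is slightly more self-contained, while the paper's route makes the dependence on the general relation between $\eta_r$ and $\eta_1$ explicit; both yield the same constant $c_1=\tfrac12\min\{1,\Delta_{\max}^{-2}\}$.
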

\begin{proof}
This proof follows \cite[Lemma 11]{grapiglia2015convergence}, with the main difference being the use of $\overline{\eta}_1(\bx_k)$ instead of $\Psi_1(\bx_k)$. 

We begin by defining $\bs_k^*$ as a global minimizer of the subproblem \eqref{eq_trsub}.
Let us also define $\tilde{\bs}_k$ to be a global minimizer of $\tilde{l}(\bx_k,\bs)$ for $\bs\in B(\b0,\Delta_k)$ (which exists since $h$ is continuous by \assref{ass_regu}), and so $\eta_{\Delta_k}(\bx_k) = \tilde{l}(\bx_k,\b0) - \tilde{l}(\bx_k,\tilde{\bs}_k)$.
Note also that since $h$ is convex, we have, for any $\theta\in[0,1]$,
\begin{align}
	h(\bx+\theta\tilde{\bs}_k) \leq (1-\theta) h(\bx) + \theta h(\bx+\tilde{\bs}_k).
\end{align}
Hence, for any $\theta\in[0,1]$, we have
\begin{align}
&	m_k(\bx_k)-m_k(\bx_{k}+\bs_k^{*}) \nonumber \\
 &\qquad \geq m_k(\bx_k)-m_k(\bx_{k}+\theta\tilde{\bs}_k) \\
	&\qquad = h(\bx_k) -\theta \grad p_k(\bx_k)^T \tilde{\bs}_k - \frac{1}{2}\theta^2 \tilde{\bs}_k^T H_k \tilde{\bs}_k - h(\bx_k+\theta\tilde{\bs}_k) \\
	&\qquad \geq \theta h(\bx_k) -\theta \grad p_k(\bx_k)^T \tilde{\bs}_k - \frac{1}{2}\max\{1,\|H_k\|\}\Delta_k^2 \theta^2  - \theta h(\bx+\tilde{\bs}_k) \\
	&\qquad = \theta \eta_{\Delta_k}(\bx_k) - \frac{1}{2}\max\{1,\|H_k\|\}\Delta_k^2 \theta^2 . \label{eq_mk_dec_temp3}
\end{align}
Since $\theta\in[0,1]$ was arbitrary, \eqref{eq_mk_dec_temp3} is equivalent to
\begin{align}
	m_k(\bx_k)-m_k(\bx_{k}+\bs_k^{*}) &\geq \max_{0\leq\theta\leq1}\left\{\theta\eta_{\Delta_k}(\bx_k)-\frac{1}{2}\max\left\{1,\norm{H_k}\right\}\Delta_k^{2}\theta^{2}\right\}.
\end{align}
This is a concave quadratic in $\theta$, with constrained maximizer 
\begin{align}
 \theta^*=\min\left(1, \frac{\eta_1(\bx_k)}{\max\{1, \|H_k\|\}\Delta_k^2}\right).   
\end{align}
Using this value of $\theta^*$, we get
\begin{align}
    m_k(\bx_k)-m_k(\bx_{k}+\bs_k^{*}) &\geq \frac{1}{2}\min\left\{\eta_{\Delta_k}(\bx_k),\frac{[\eta_{\Delta_k}(\bx_k)]^{2}}{\max\left\{1,\norm{H_k}\right\}\Delta_k^{2}}\right\}.
\end{align}
Then \cite[Lemma 2.1]{cartis2011evaluation}\footnote{Stating that $\eta_r(\bx) \geq \min\{1,r\}\eta_1(\bx)$ for any $r>0$ and $\bx\in\R^n$.} gives
\begin{align} \label{eq_mk_dec_temp4}
    m_k(\bx_k)-m_k(\bx_{k}+\bs_k^{*}) &\geq \frac{1}{2}\min\{1,\Delta_k\}\eta_1(\bx_k)\min\left\{1,\frac{\min\{1,\Delta_k\}\eta_1(\bx_k)}{\max\left\{1,\norm{H_k}\right\}\Delta_k^{2}}\right\}.
\end{align}
If $\Delta_k \leq 1$, then \eqref{eq_mk_dec_temp4} reduces to 
\begin{align} \label{eq_mk_dec_temp5}
    m_k(\bx_k)-m_k(\bx_{k}+\bs_k^{*}) &\geq \frac{1}{2}\eta_1(\bx_k)\min\left\{\Delta_k,\frac{\eta_1(\bx_k)}{\max\left\{1,\norm{H_k}\right\}}\right\}.
\end{align}
If $\Delta_k \geq 1$, and noting that $\Delta_{\max}>1$ (by assumption in \algref{alg_dfo}), it follows from \eqref{eq_mk_dec_temp4} and $\Delta_k \leq \Delta_{\max} \leq \Delta_{\max}^{2}$ that 
\begin{align*}
    m_k(\bx_k)-m_k(\bx_{k}+\bs_k^{*}) &\geq \frac{1}{2}\eta_1(\bx_k)\min\left\{1,\frac{\eta_1(\bx_k)}{\max\left\{1,\norm{H_k}\right\}\Delta_k^{2}}\right\} \\
    &\geq \frac{1}{2\Delta_{\max}^2}\eta_1(\bx_k)\min\left\{\Delta_{\max}^2,\frac{\eta_1(\bx_k)}{\max\left\{1,\norm{H_k}\right\}}\right\} \\
    &\geq \frac{1}{2\Delta_{\max}^2}\eta_1(\bx_k)\min\left\{\Delta_{k},\frac{\eta_1(\bx_k)}{\max\left\{1,\norm{H_k}\right\}}\right\} \stepcounter{equation}\tag{\theequation}\label{eq_mk_dec_temp6}
\end{align*}
Therefore, combining \eqref{eq_mk_dec_temp5} and \eqref{eq_mk_dec_temp6}, since $\overline{\eta}_1(\bx_k) \leq \eta_1(\bx_k)$, we obtain:
\begin{align} \label{eq_mk_dec_temp7}
    m_k(\bx_k)-m_k(\bx_{k}+\bs_k^{*}) &\geq c_1\overline{\eta}_1(\bx_k)\min\left\{\Delta_k,\frac{\overline{\eta}_1(\bx_k)}{\max\left\{1,\norm{H_k}\right\}}\right\}.
\end{align}
Subtracting \eqref{eq_alg_suff_dec} from \eqref{eq_mk_dec_temp7} gives \eqref{eq_mk_dec} as required.
\end{proof}

\subsection{Computing criticality measures}
When we implement S-FISTA for the approximate criticality measure $\overline{\eta}_1$ in \algref{alg_dfo} (i.e.~in \eqref{eq_min_criticality_alg1} and \eqref{eq_min_criticality_alg2}), we choose $r \coloneqq 1$, $\bg\coloneqq \grad p_k(\bx_k)=\bg_k$ and $H \coloneqq 0$ 
in \eqref{eq_sfista_para}. 
The accuracy level is set to be $\epsilon \coloneqq \min\{(1-e_1)\epsilon_C, e_2\Delta_k^{\textnormal{init}}\}$ in \eqref{eq_min_criticality_alg1} and $\epsilon \coloneqq e_2 \omega_C^{i-1}\Delta_k^{\textnormal{init}}$ in \eqref{eq_min_criticality_alg2}, matching our requirements \eqref{eq_algdfo_eta_error} and \eqref{eq_alggeo_eta_error} respectively. 

\subsection{Smoothed trust-region subproblem}

In the case of solving the trust-region problem \eqref{eq_smoothing_trs} for Algorithm~\ref{alg_dfo_smoothing}, we follow \eqref{eq_sfista_para} from Theorem~\ref{thm_sfista_accuracy} and at each iteration use the value
\begin{align}
    \mu(\gamma) \coloneqq \frac{2\gamma}{L_h(L_h+\sqrt{L_h^2+2\norm{H_k}\gamma})}, \label{eq_mu_gamma}
\end{align}
where $\norm{H_k}=\|J_k^T J_k\|$ from \eqref{eq_model_nlls_smoothing}.
The total number of iterations $K$ is also given by substituting $\epsilon \leftarrow \gamma$ in \eqref{eq_sfista_para}.
This ensures that our trust-region subproblem is solved to global optimality level $\gamma$, which decreases to zero as Algorithm~\ref{alg_dfo_smoothing} progresses.

\section{Numerical Experiments} \label{sec_numerics}

\subsection{Implementation} \label{sec_implementation}
We investigate the performance of both approaches for solving the regularized least-squares problem \eqref{eq_prob_nlls} by implementing two modifications of the original DFO-LS software \cite{cartis2019improving}:
\begin{itemize}
    \item DFO-LSR (DFO-LS with Regularization): Algorithm~\ref{alg_dfo} with directly solves \eqref{eq_prob_nlls}.\footnote{Implementation available from \url{https://github.com/yanjunliu-regina/dfols}.}
    \item DFO-LSSR (DFO-LS with Smoothed Regularizion): Algorithm~\ref{alg_dfo_smoothing}, which iteratively smoothing \eqref{eq_prob_nlls} to \eqref{eq_smoothed_nlls}.\footnote{Implementation available from \url{https://github.com/khflam/dfols/}.} We used parameters $\sigma=0.1$, $\gamma_0=0.01$ and trust-region termination function $d(\gamma_j) = \mu(\gamma_j)^2$ for $\mu(\gamma)$ given by \eqref{eq_mu_gamma}.
\end{itemize}
All other algorithm parameters are set to their default values in DFO-LS.
As described in \secref{sec_sfista}, the different subproblems are solved using S-FISTA (Algorithm~\ref{alg_sfista}) with parameters as described above. 
In practice, we additionally terminate S-FISTA after $500$ iterations, which produces comparable numerical results to using the expected number of iterations, but improves the overall algorithm runtime. 

\subsection{Testing Setup} \label{sec_test_info}
The computational performance of DFO-LSR is compared with PyNOMAD, which is a Python interface for NOMAD \cite{le2011algorithm}, a solver for black-box optimization based on the Mesh Adaptive Direct Search (MADS) algorithm and is capable of solving nonsmooth problems.
We note that NOMAD is not aware of the problem structure in \eqref{eq_prob_nlls}, it simply can evaluate $\Phi(\bx)$ as a black-box, whereas our algorithms receive more problem information, namely $\br(\bx)$ and $h$ (plus its proximal operator).
We build our test suite based on \cite{more2009benchmarking}, a collection of $53$ unconstrained nonlinear least squares problems with dimension $2 \leq n \leq 12$ and $2 \leq m \leq 65$. For each problem, we set $f(\bx)$ to be the nonlinear least squares from \cite{more2009benchmarking} and $h(\bx)=\norm{\bx}_1$. Both solvers are tested additionally on our test suite where stochastic noise is introduced in the evaluations of the residuals $r_i$. Specifically, we introduce i.i.d. $\epsilon \sim N(0,\sigma^2)$ for each $i$ and $\bx$ and implement two unbiased noise models as follows:
\begin{itemize}
    \item Multiplicative Gaussian noise: we evaluate residual $\tilde{r}_i(\bx) = r_i(\bx)(1+\epsilon)$; and
    \item Additive Gaussian noise: we evaluate residual $\tilde{r}_i(\bx) = r_i(\bx) + \epsilon$.
\end{itemize}
For our testing, we took the noise level to be $\sigma=10^{-2}$.
Each solver was run 10 times on every problem with noise, and a maximum budget of $100(n+1)$ evaluations was given (for an $n$-dimensional problem).

\begin{figure}[H]
  \centering
  \begin{subfigure}[b]{0.48\textwidth}
    \includegraphics[width=\textwidth]{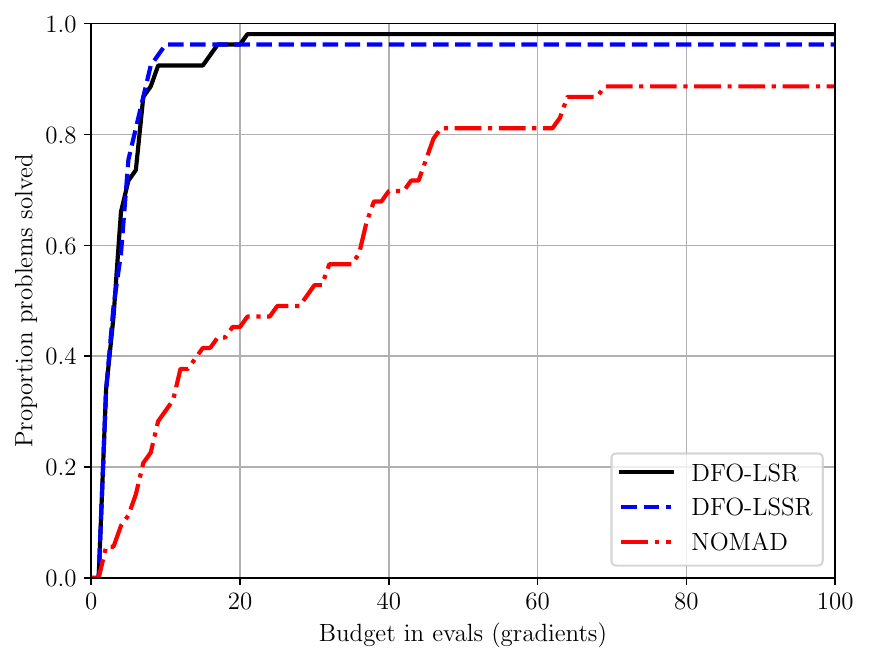}
    \caption{Data profile, $\tau=10^{-3}$}
  \end{subfigure}
  ~
  \begin{subfigure}[b]{0.48\textwidth}
    \includegraphics[width=\textwidth]{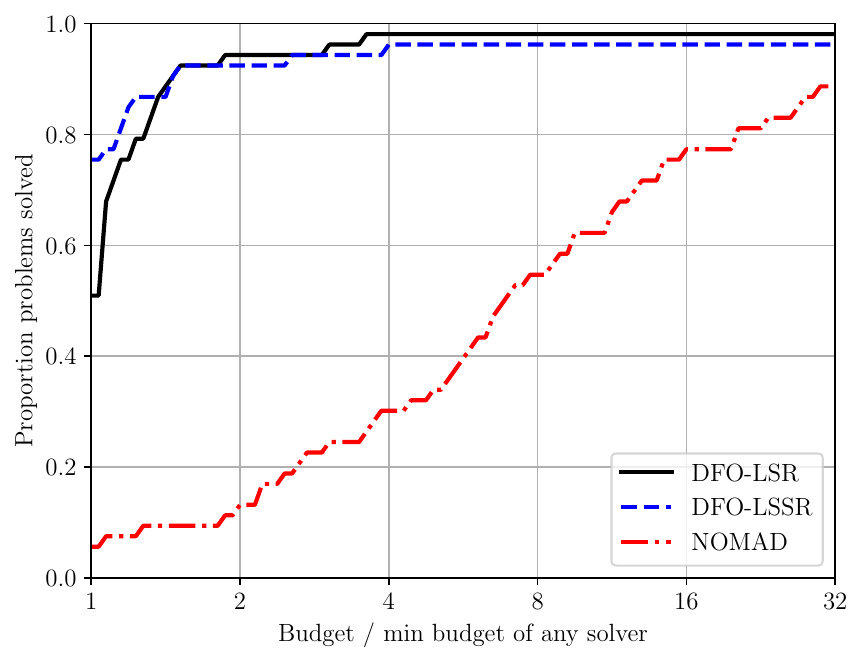}
    \caption{Performance profile, $\tau=10^{-3}$}
  \end{subfigure}
  \\
  \begin{subfigure}[b]{0.48\textwidth}
    \includegraphics[width=\textwidth]{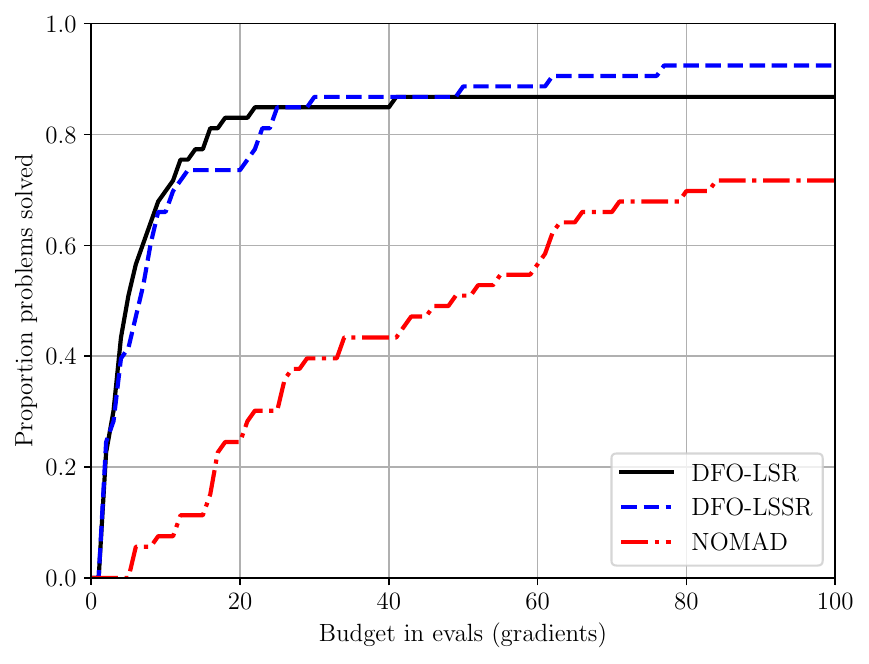}
    \caption{Data profile, $\tau=10^{-5}$}
  \end{subfigure}
  ~
  \begin{subfigure}[b]{0.48\textwidth}
    \includegraphics[width=\textwidth]{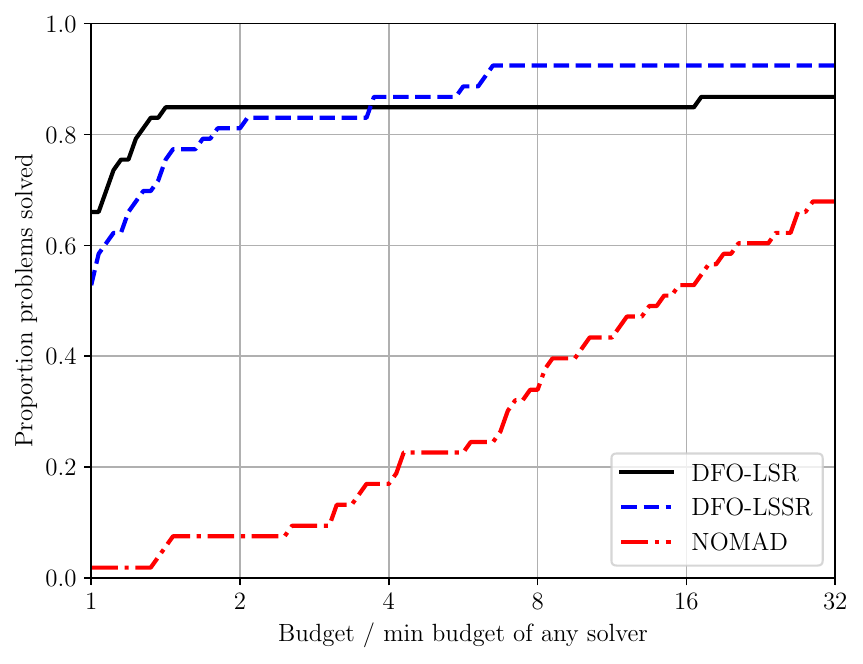}
    \caption{Performance profile, $\tau=10^{-5}$}
  \end{subfigure}
  \\
  \begin{subfigure}[b]{0.48\textwidth}
    \includegraphics[width=\textwidth]{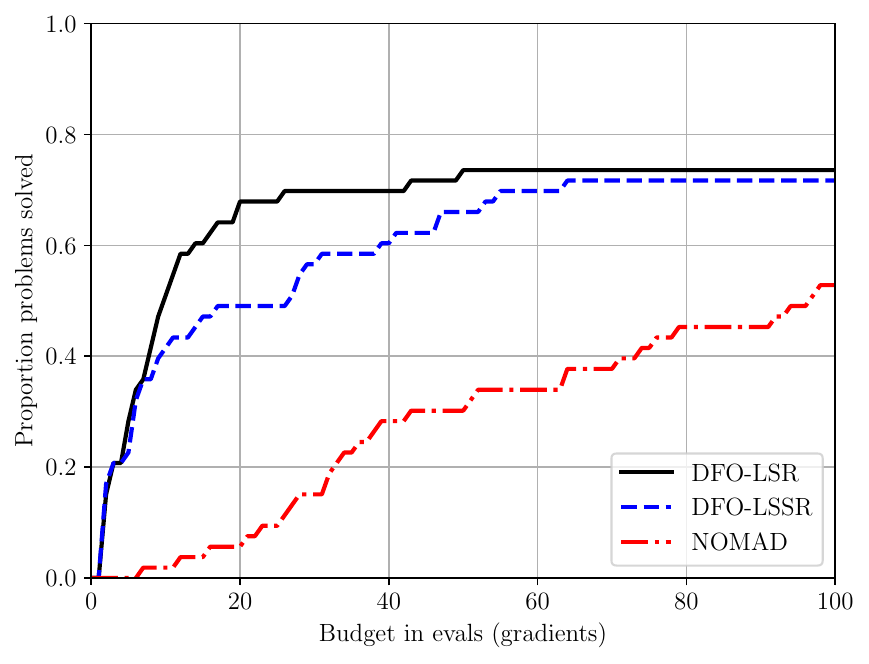}
    \caption{Data profile, $\tau=10^{-7}$}
  \end{subfigure}
  ~
  \begin{subfigure}[b]{0.48\textwidth}
    \includegraphics[width=\textwidth]{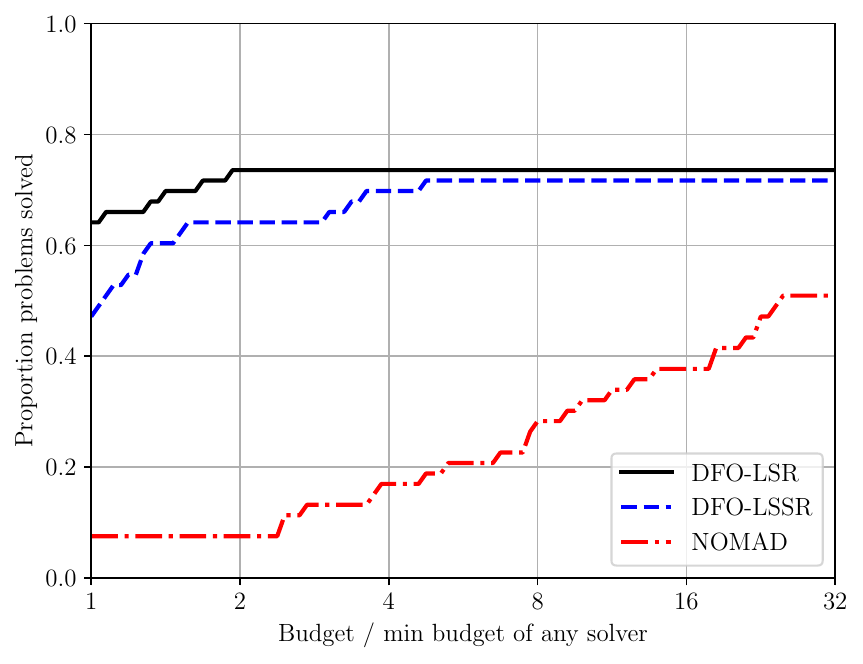}
    \caption{Performance profile, $\tau=10^{-7}$}
  \end{subfigure}
  \caption{Comparison of DFO-LSR, DFO-LSSR and NOMAD on smooth/noiseless test problems for increasing accuracy levels $\tau=10^{-3}$, $\tau=10^{-5}$ and $\tau=10^{-7}$.}
  \label{fig_smooth}
\end{figure}

To compare solvers, we use the data and performance profiles \cite{dolan2002benchmarking}. For every solver $\So$, each problem $p$, we determine the number of function evaluations $N_p(\So;\tau)$ required for a problem to be solved up to a given accuracy level $\tau \in (0,1)$:
\begin{equation} \label{eq_evals_before_solved}
N_p(\So;\tau) := \text{\# objective evals before }\ \Phi(\bx_k) \leq \Phi^* + \tau\left(\Phi(\bx_0)-\Phi^*\right),
\end{equation}
where $\Phi^*$ is an approximation to the true minimum of problem $p$. In our numerical experiments, we take $\Phi^*$ to be the smallest objective value generated by either of the two solvers. We set $N_p(\So;\tau) = \infty$ if the inequality in \eqref{eq_evals_before_solved} is not achieved for a corresponding $\So$, $p$ and $r$ in the maximum computational budget allowed, here $100(n_p+1)$.

We compare solvers by calculating the proportion of problems solved in our problem suite  $\Ps$ for a given computational budget. Consider solver $\So$, problem $p$ and accuracy level $\tau \in (0,1)$, for \emph{data profiles}, we normalize $N_p(\So; \tau)$ by the dimension for problem $p$ and plot 
\begin{equation}
    d_{\So, \tau}(\alpha) \coloneqq \frac{\{p \in \Ps: N_p(\So;\tau) \leq \alpha (n_p + 1)\}}{\abs{\Ps}}, \quad \alpha \in [0, N_g],
\end{equation}
where $N_g$ is the maximum computational budget, measured in simplex gradients. For \emph{performance profiles}, we plot $N_p(\So; \tau)$ normalized by the minimum number of objective evaluations $N_p^*(\tau) \coloneqq \min_{\So} N_p(\So; \tau)$:
\begin{equation}
\pi_{\So,\tau}(\alpha) := \frac{\abs{\{p \in \Ps : N_p(\So;\tau) \leq \alpha N_p^*(\tau)\}}}{\abs{\Ps}}, \quad \alpha \geq 1.
\end{equation}

\subsection{Test Results}

In Figure~\ref{fig_smooth}, we compare our methods DFO-LSR and DFO-LSSR with NOMAD for problems with noiseless objective evaluations, for accuracy levels $\tau\in\{10^{-3}, 10^{-5}, 10^{-7}\}$.
As we would expect, both DFO-LSR and DFO-LSSR significantly outperform NOMAD at all accuracy levels, as they have much more information about the problem structure, including the full residual vector $\br(\bx)$ and the proximal operator of $h$.
Overall, the performance of DFO-LSR and DFO-LSSR are similar, but we observe that DFO-LSSR outperforms DFO-LSR for lower accuracy levels, but DFO-LSR is better able to reach high accuracy solutions, $\tau=10^{-7}$.

For problems with noisy objective evaluations, we show the full numerical results at accuracy levels $\tau\in\{10^{-3},10^{-5}\}$ in Appendix~\ref{app_extra_numerics}.
We do not include the highest accuracy level $\tau=10^{-7}$ here, since such reaching high accuracy requirements is usually impractical for noisy problems without further algorithmic improvements (e.g.~sample averaging).
For noisy problems, again both DFO-LSR and DFO-LSSR outperform NOMAD.
The performance difference between the DFO-LSR and DFO-LSSR is quite small, and so it is not clear from these results which variant is to be preferred in this setting.

\section{Conclusions and Future Work}

We introduce two model-based derivative-free approaches for solving \eqref{eq_prob}, and in particular its specialization to nonlinear least-squares \eqref{eq_prob_nlls}.
First, Algorithm~\ref{alg_dfo} extends the approach from \cite{grapiglia2016derivative} to handle inexact stationarity measures, a simpler sufficient decrease condition for the trust-region subproblem, and a more complicated algorithmic framework inherited from \cite{cartis2019derivative}, without sacrificing global convergence or worst-case complexity results.
Secondly, Algorithm~\ref{alg_dfo_smoothing} adapts the smoothing approach from \cite{garmanjani2016trust} to our setting, and we add to its convergence theory by showing that all accumulation points of the algorithm are Clarke stationary. 
Our numerical results indicate that both approaches perform well, although there is some evidence to suggest that the smoothing approach is slightly better at achieving low accuracy solutions, while Algorithm~\ref{alg_dfo} is better at finding high accuracy solutions.

The most promising direction for future research on this topic is to extend our approaches to the case where $f$ does not have a nonlinear least-squares structure.
The extra difficulty in this setting is to appropriately handle the nonconvex, nonsmooth subproblems that would arise in this setting.

\bibliographystyle{siamplain}
\bibliography{references}

\clearpage

\appendix

\section{Supplementary Numerical Results} \label{app_extra_numerics}
Below we show our numerical results for problems with noisy objectives, described in Section~\ref{sec_test_info}.
Results for problems with multiplicative Gaussian noise are shown in Figure~\ref{fig_multiplicative_gaussian} and problems with additive Gaussian noise in Figure~\ref{fig_additive_gaussian}.

\begin{figure}[H]
  \centering
  \begin{subfigure}[b]{0.48\textwidth}
    \includegraphics[width=\textwidth]{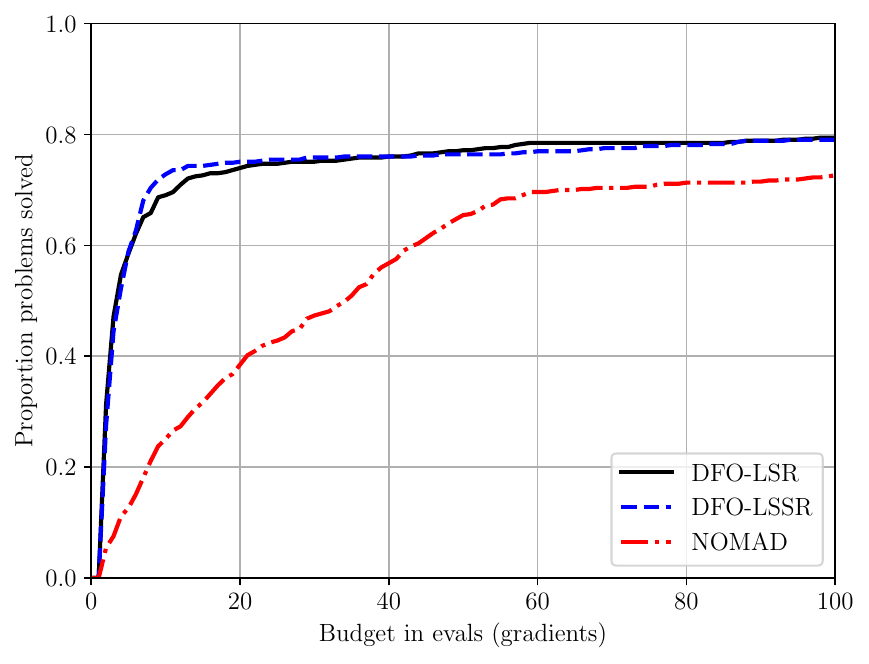}
    \caption{Data profile, $\tau=10^{-3}$}
  \end{subfigure}
  ~
  \begin{subfigure}[b]{0.48\textwidth}
    \includegraphics[width=\textwidth]{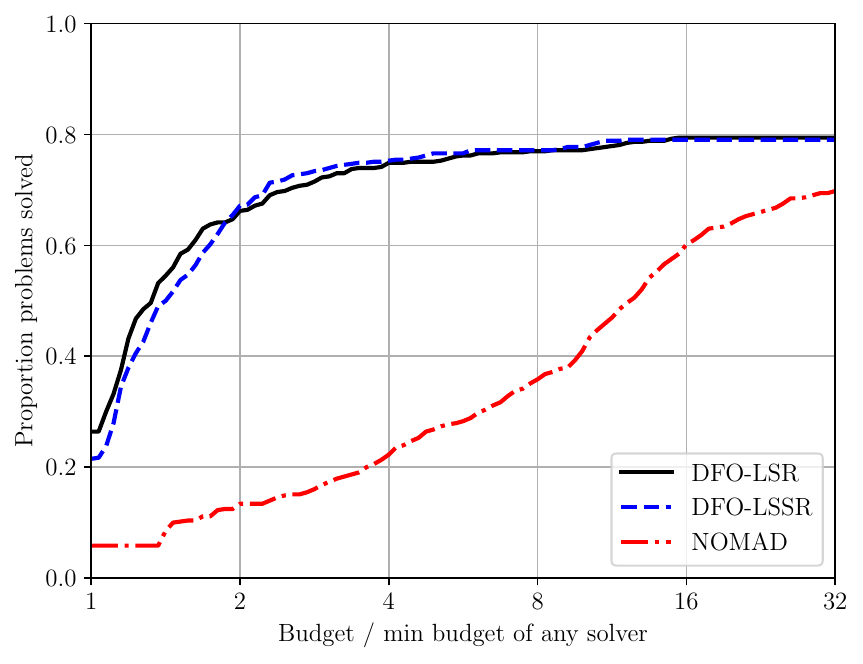}
    \caption{Performance profile, $\tau=10^{-3}$}
  \end{subfigure}
  \\
  \begin{subfigure}[b]{0.48\textwidth}
    \includegraphics[width=\textwidth]{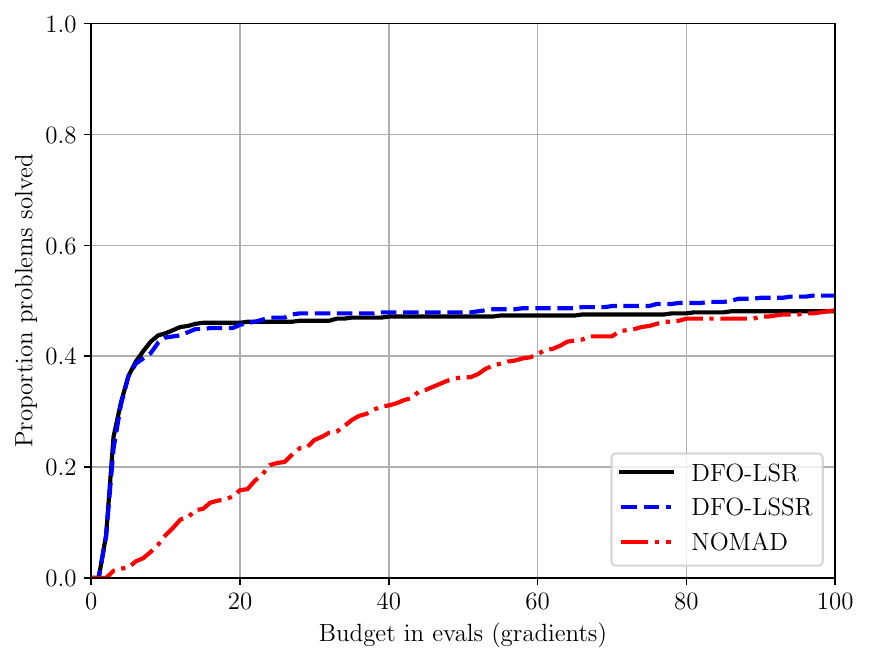}
    \caption{Data profile, $\tau=10^{-5}$}
  \end{subfigure}
  ~
  \begin{subfigure}[b]{0.48\textwidth}
    \includegraphics[width=\textwidth]{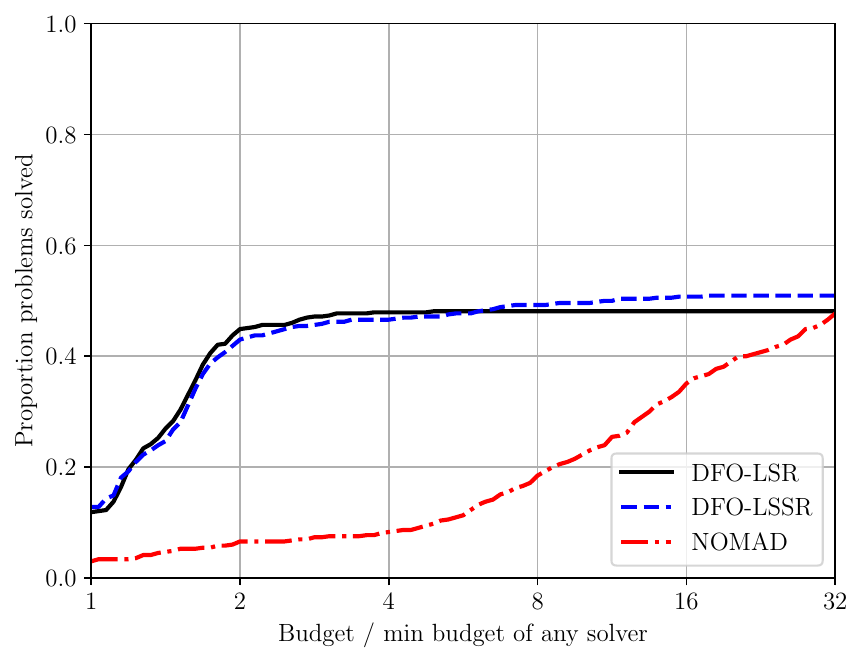}
    \caption{Performance profile, $\tau=10^{-5}$}
  \end{subfigure}
  \caption{Comparison of DFO-LSR, DFO-LSSR and NOMAD on test problems with multiplicative Gaussian noise for accuracy levels $\tau=10^{-3}$ and $\tau=10^{-5}$.}
  \label{fig_multiplicative_gaussian}
\end{figure}

\begin{figure}[H]
  \centering
  \begin{subfigure}[b]{0.48\textwidth}
    \includegraphics[width=\textwidth]{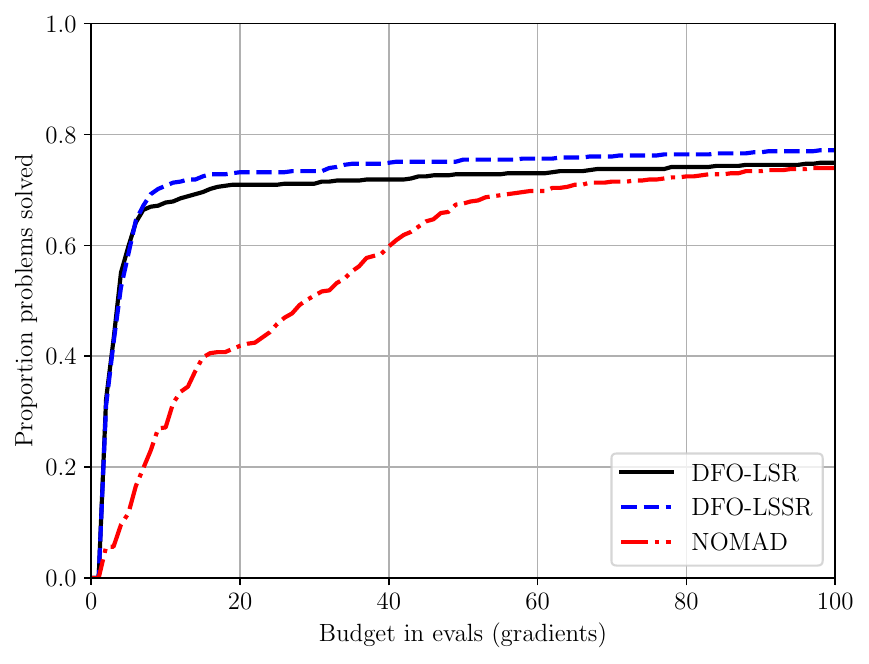}
    \caption{Data profile, $\tau=10^{-3}$}
  \end{subfigure}
  ~
  \begin{subfigure}[b]{0.48\textwidth}
    \includegraphics[width=\textwidth]{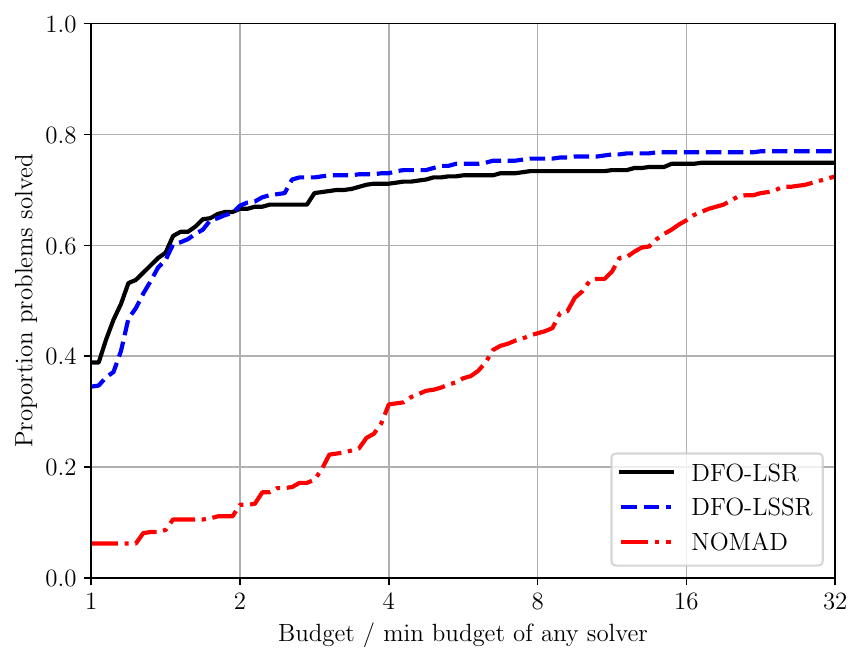}
    \caption{Performance profile, $\tau=10^{-3}$}
  \end{subfigure}
  \\
  \begin{subfigure}[b]{0.48\textwidth}
    \includegraphics[width=\textwidth]{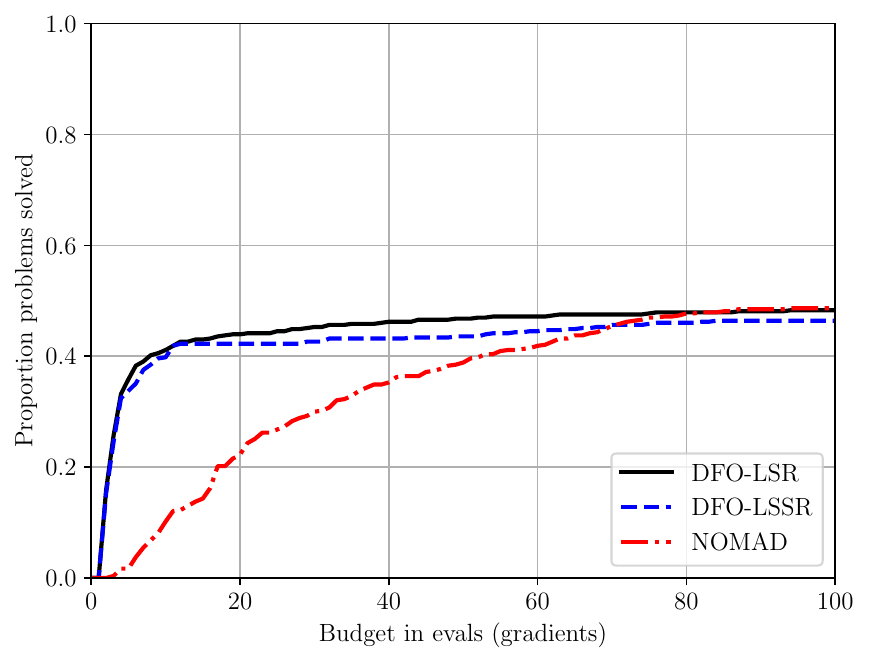}
    \caption{Data profile, $\tau=10^{-5}$}
  \end{subfigure}
  ~
  \begin{subfigure}[b]{0.48\textwidth}
    \includegraphics[width=\textwidth]{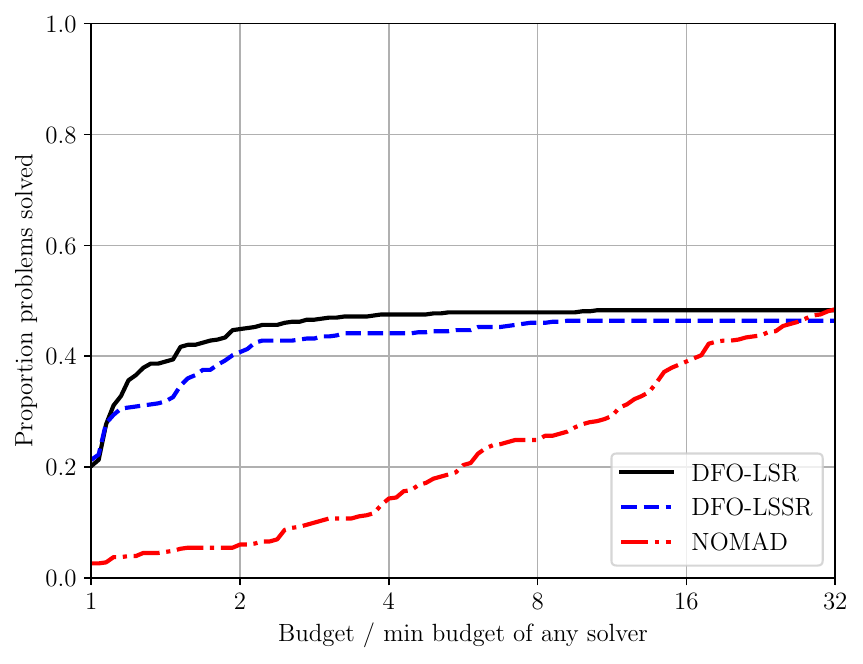}
    \caption{Performance profile, $\tau=10^{-5}$}
  \end{subfigure}
  \caption{Comparison of DFO-LSR, DFO-LSSR and NOMAD on test problems with additive Gaussian noise for accuracy levels $\tau=10^{-3}$ and $\tau=10^{-5}$.}
  \label{fig_additive_gaussian}
\end{figure}

\end{document}